\newtheorem{theorem}{Theorem}[section]
\newtheorem{lemma}[theorem]{Lemma}
\newtheorem{remark}[theorem]{Remark}
\newtheorem{definition}[theorem]{Definition}
\newcommand{\R}{\mathbb{R}  }
\newcommand{\Rn}{ {\mathbb{R}^n}}
\newcommand{\rnplus}{{\mathbb{R}^n_+ } }
\newcommand{\rn}{{\mathbb{R}^n } }
\newcommand{\pfeil}{ \rightarrow }
\newcommand{\into}{\hookrightarrow}
\renewcommand{\phi}{\varphi}
\renewcommand{\div}{\operatorname{div}}
\newcommand{\ljump}{ \llbracket  }
\newcommand{\rjump}{ \rrbracket }
\title{Well-Posedness and qualitative behaviour of the Mullins-Sekerka problem with ninety-degree angle boundary contact}
\author[1]{Helmut Abels}
\author[1]{Maximilian Rauchecker}
\author[2]{Mathias Wilke}
\affil[1]{Fakultät für Mathematik, Universität Regensburg,  93053 Regensburg, Germany}
\affil[2]{Institut für Mathematik, Martin-Luther-Universität Halle-Wittenberg, 06099 Halle, Germany}
\numberwithin{equation}{section} %Numbering style
\begin{document}
\maketitle
\begin{abstract}
We show local well-posedness for a Mullins-Sekerka system with ninety degree angle boundary contact. We will describe the motion of the moving interface by a height function over a fixed reference surface. Using the theory of maximal regularity together with a linearization of the equations and a localization argument we will prove well-posedness of the full nonlinear problem via the contraction mapping principle. Here one difficulty lies in choosing the right space for the Neumann trace of the height function and showing maximal $L_p-L_q$-regularity for the linear problem.

In the second part we show that solutions starting close to certain equilibria exist globally in time, are stable, and converge to an equilibrium solution at an exponential rate.
\end{abstract}
\section{Introduction}
In this article we study the Mullins-Sekerka problem inside a bounded, smooth domain $\Omega \subset \R^n, n = 2,3$, where the interface separating the two materials meets the boundary of $\Omega$ at a constant ninety degree angle. This leads to a free boundary problem involving a contact angle problem as well.

We assume that the domain $\Omega$ can be decomposed  as $\Omega = \Omega^+ (t) \dot\cup \mathring \Gamma (t) \dot \cup \Omega^- (t)$, where $\mathring \Gamma (t)$ denotes the interior of $\Gamma(t)$, a $(n-1)$-dimensional submanifold with boundary. We interpret $\Gamma(t)$ to be the interface separating the two phases, $\Omega^\pm (t)$, which will be assumed to be connected. The boundary of $\Gamma(t)$ will be denoted by $\partial \Gamma (t)$. Furthermore we assume $\Gamma(t)$ to be orientable, the unit vector field on $\Gamma(t)$ pointing from $\Omega^+(t)$ to $\Omega^-(t)$ will be denoted by $n_{\Gamma(t)}$. 

 The precise model we study reads as
\begin{subequations} \label{87959649764dfs}
\begin{align}
V_{\Gamma(t)} &= - \ljump n_{\Gamma(t)} \cdot \nabla \mu \rjump, && \text{on } \Gamma(t), \label{9283486} \\
\mu|_{\Gamma(t)} &= H_{\Gamma(t)}, && \text{on } \Gamma(t), \\
\Delta \mu &= 0, && \text{in } \Omega \backslash \Gamma(t), \\
n_{\partial\Omega} \cdot \nabla \mu|_{\partial\Omega} &= 0, && \text{on } \partial\Omega, \\
\mathring \Gamma(t) &\subseteq \Omega, \\
\partial \Gamma(t) &\subseteq \partial\Omega, \\
\angle(\Gamma(t), \partial \Omega) &= \pi/2, && \text{on } \partial\Gamma(t), \label{983457034}
\end{align}
subject to the initial condition
\begin{equation}
\Gamma|_{t=0} = \Gamma_0.\label{9283486Z}
\end{equation}
\end{subequations}
Here, $V_{\Gamma(t)}$ denotes the normal velocity and $H_{\Gamma(t)}$ the mean curvature of the free interface $\Gamma(t)$, which is given by the sum of the principal curvatures. 
By $\ljump \cdot \rjump$ we denote the jump of a quantity across $\Gamma(t)$ in direction of $n_{\Gamma(t)}$, that is,
\begin{equation}
\ljump f \rjump (x) := \lim_{ \varepsilon \pfeil 0+} [ f(x + \varepsilon n_{\Gamma(t)} ) -  f(x - \varepsilon n_{\Gamma(t)} ) ], \quad x \in \Gamma(t).
\end{equation}
Equation \eqref{983457034} prescribes the angle at which the interface $\Gamma(t)$ has contact with the fixed boundary $\partial\Omega$, which will be a constant ninety degree angle during the evolution. We can alternatively write \eqref{983457034} as the condition that the normals are perpendicular on the boundary of the interface,
\begin{equation}
n_{ \Gamma (t)} \cdot n_{\partial \Omega} = 0, \quad \text{on } \partial \Gamma (t).
\end{equation}

Let us first state some simple properties of this evolution. 
Note that we obtain the compatibility condition
\begin{equation}
\angle(\Gamma_0, \partial \Omega) = \pi/2 \quad \text{on } \partial\Gamma_0. 
\end{equation}
Furthermore, the volume of each of the two phases is conserved,
\begin{equation} \label{9348765037465087345}
\frac{d}{dt}| \Omega^\pm (t) | = 0, \quad t \in \R_+.
\end{equation}
Here, $\Omega^\pm (t)$ denote the two different phases separated by the sharp interface, $\Omega = \Omega^+(t) \cup \mathring \Gamma(t) \cup \Omega^-(t)$. Then \eqref{9348765037465087345} stems from
\begin{align}
\frac{d}{dt} | \Omega^+ (t)| = \int_{\Gamma(t)} V_{\Gamma(t)} d\mathcal H^{n-1}  &= - \int_{\Gamma(t)} \ljump n_{\Gamma(t)} \cdot \nabla \mu \rjump d\mathcal H^{n-1}  \\ &= \int_{\Omega^+(t)} \Delta \mu dx = 0.
\end{align}
However, the energy given by the surface area of the free interface $\Gamma(t)$ satisfies
\begin{equation}
\frac{d}{dt}| \Gamma(t) | \leq 0, \quad t \in \R_+.
\end{equation}
Indeed, an integration by parts readily gives
\begin{align}
\frac{d}{dt}|\Gamma(t)| = \int_{\Gamma(t)} H_{\Gamma(t)} V_{\Gamma(t)} d\mathcal H^{n-1} &= - \int_{\Gamma(t)} \mu|_{\Gamma(t)}  \ljump n_{\Gamma(t)} \cdot \nabla \mu \rjump d\mathcal H^{n-1} \\ &= - \int_\Omega | \nabla \mu|^2 dx \leq 0.
\end{align}

In this article we are concerned with existence of strong solutions of the Mullins-Sekerka problem \eqref{87959649764dfs}. To this end we will later pick some reference surface $\Sigma$ inside the domain $\Omega$, also intersecting the boundary with a constant ninety degree angle, and write the moving interface as a graph over $\Sigma$ by a height function $h$, depending on $x \in \Sigma$ and time $t \geq 0$. Pulling back the equations then to the time-independent domain $\Omega \backslash \Sigma$ we reduce the problem to a nonlinear evolution equation for $h$. The corresponding linearization for the spatial differential operator for $h$ then turns out to be a nonlocal pseudo-differential operator of order three, cf. \cite{eschersimonett}. We also refer to the introduction of Escher and Simonett \cite{eschersimonett} for further properties of the Mullins-Sekerka problem.

In the following, we will be interested in height functions $h$ with regularity
\begin{equation}
h \in W^1_p(0,T;W^{1-1/q}_q(\Sigma)) \cap L_p(0,T;W^{4-1/q}_q(\Sigma)),
\end{equation}
where $p$ and $q$ are different in general. We will choose $q < 2$ and $p$ finite but large, to ensure that the real interpolation space
\begin{equation}
X_\gamma := ( W^{4-1/q}_q(\Sigma), W^{1-1/q}_q(\Sigma) )_{1-1/p,p} = B^{4-1/q-3/p}_{qp}(\Sigma)
\end{equation}
continuously embeds into $C^2(\Sigma)$, cf. Amann \cite{amannlineartheory}.
By an ansatz where $p = q < 2$, this is not achievable. 
We need however the restriction $q < 2$ to avoid additional compatibility conditions for the elliptic problem, cf. also Section \ref{section34345439}.
This however requires an $L_p-L_q$ maximal regularity result of the underlying linearized problem, which we will also show in this article.

\textbf{Outline of this paper.} In Section 2 we will briefly introduce function spaces and techniques we work with and give references for further discussion. In section 3 we rewrite the free boundary problem of the moving interface as a nonlinear problem for the height function parametrizing the interface. Section 4 is devoted to the analysis of the underlying linear problem, where an extensive analysis is made on the half-space model problems.
This is needed since these model problems at the contact line are not well-understood until now.
 The main result of this section is $L_p-L_q$ maximal regularity for the linear problem. Section 5 contains that the full nonlinear problem is well-posed and Section 6 is concerned with the stability properties of solutions starting close to certain equilibria.
\section{Preliminaries and Function Spaces}
In this section we give a very brief introduction to the function spaces we use and techniques we employ in this thesis. For a more detailed approach we refer the reader to the books of Triebel \cite{triebel} and Pr\"uss and Simonett \cite{pruessbuch}.

\subsection{Bessel-Potential, Besov and Triebel-Lizorkin Spaces}

As usual, we will denote the classical $L_p$-Sobolev spaces on $\R^n$ by $W^k_p(\R^n)$, where $k$ is a natural number and $1 \leq p \leq \infty$. The Bessel-potential spaces will be denoted by $H^s_p(\R^n)$ for $s \in \R$ and the Sobolev-Slobodeckij spaces by $W^s_p(\R^n)$. We will also denote the usual Besov spaces by $B^s_{pr}(\R^n)$, where $s \in \R, 1 \leq p,r \leq \infty$. Lastly, as usual the Triebel-Lizorkin spaces are denoted by $F^s_{pr}(\R^n)$.

These function spaces on a domain $\Omega \subset \R^n$ are defined in a usual way by restriction. The Banach space-valued versions of these spaces are denoted by $L_{p}(\Omega;X)$, $W^{k}_{p}(\Omega;X)$, $H^{s}_{p}(\Omega;X)$, $W^{s}_{p}(\Omega;X)$, $B^{s}_{pr}(\Omega;X)$, $F^{s}_{pr}(\Omega;X)$, respectively. For precise definitions we refer to \cite{meyriesveraarpointwise}.

For results on embeddings, traces, interpolation and extension operators we refer to \cite{abelsbuch}, \cite{pruessbuch}, \cite{runst}, \cite{triebel}.

The following lemma is very well known and can easily be shown by using paraproduct estimates, see \cite{danchinbuch}.
\begin{lemma} \label{danchinlemma}
For any $s>0, 1 < p_1,r < \infty$,
\begin{equation} \label{paraestimate}
| vw |_{B^s_{p_1r}(\Rn)} \lesssim | v |_{B^s_{p_1r}(\Rn)} |w|_{L_\infty(\Rn)} + |v|_{L_\infty(\Rn)}| w |_{B^s_{p_1r} (\Rn)}
\end{equation}
for all $v,w \in B^s_{p_1r} (\Rn) \cap L_\infty(\Rn)$. In particular, the space $B^s_{p_1r} (\Rn) \cap L_\infty(\Rn)$ is an algebra.
\end{lemma}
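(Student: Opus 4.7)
The plan is to combine Bony's paraproduct decomposition with the Littlewood--Paley characterization of Besov spaces. Fix a dyadic partition of unity $(\Delta_j)_{j\geq -1}$ with associated low-frequency cutoffs $S_j=\sum_{k\leq j-1}\Delta_k$, so that $|v|_{B^s_{p_1 r}(\Rn)}\sim \|(2^{js}\|\Delta_j v\|_{L_{p_1}})_{j}\|_{\ell^r}$. I would then write
\[
vw=T_v w+T_w v+R(v,w),\quad T_v w=\sum_{j}S_{j-1}v\cdot\Delta_j w,\quad R(v,w)=\sum_{|j-k|\leq 1}\Delta_j v\cdot\Delta_k w,
\]
and estimate the three pieces separately, each by one of the two terms on the right-hand side of \eqref{paraestimate}.

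For the paraproduct $T_v w$ the Fourier support of every summand $S_{j-1}v\cdot\Delta_j w$ is an annulus of scale $2^j$, so $\Delta_k(T_v w)$ receives contributions only from indices $j$ with $|j-k|\leq N_0$ for a fixed $N_0$. Together with the uniform bound $\|S_{j-1}v\|_{L_\infty}\lesssim \|v\|_{L_\infty}$ and Hölder's inequality this gives
\[
2^{ks}\|\Delta_k(T_v w)\|_{L_{p_1}}\lesssim \|v\|_{L_\infty}\sum_{|k-j|\leq N_0}2^{js}\|\Delta_j w\|_{L_{p_1}},
\]
and taking the $\ell^r$-norm yields $|T_v w|_{B^s_{p_1 r}}\lesssim \|v\|_{L_\infty}|w|_{B^s_{p_1 r}}$. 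Swapping the roles of $v$ and $w$ controls $T_w v$ by the first term in \eqref{paraestimate}.

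The remainder is the only place where $s>0$ is used. Here the Fourier support of $\Delta_j v\cdot\Delta_k w$ with $|j-k|\leq 1$ is merely a ball of radius $\sim 2^j$ (not an annulus), so $\Delta_k R(v,w)$ sees all indices $j\geq k-N_0$. Placing $v$ in $L_\infty$ gives
\[
2^{ks}\|\Delta_k R(v,w)\|_{L_{p_1}}\lesssim \|v\|_{L_\infty}\sum_{j\geq k-N_0}2^{(k-j)s}\bigl(2^{js}\|\Delta_j w\|_{L_{p_1}}\bigr),
\]
and because $s>0$ the kernel $\ell\mapsto 2^{\ell s}\mathbf 1_{\{\ell\leq N_0\}}$ is summable; Young's convolution inequality on $\ell^r$ (with $1<r<\infty$) then closes the bound. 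Exchanging the roles of $v,w$ produces the symmetric estimate, and either choice fits into \eqref{paraestimate}. The main subtle point is precisely this positivity of $s$: without it the geometric sum diverges and one cannot trade a Besov norm for an $L_\infty$ norm in the remainder. Finally, the algebra property follows at once: for $v,w\in B^s_{p_1 r}(\Rn)\cap L_\infty(\Rn)$, estimate \eqref{paraestimate} places $vw$ in $B^s_{p_1 r}(\Rn)$, while the trivial bound $\|vw\|_{L_\infty}\leq \|v\|_{L_\infty}\|w\|_{L_\infty}$ keeps it in $L_\infty(\Rn)$.
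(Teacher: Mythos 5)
Your proof is correct. The paper itself supplies no argument: it simply cites Corollary 2.86 in Bahouri--Chemin--Danchin, and the standard proof of that corollary is precisely the one you give, so in spirit you are following the same route but actually carrying out the details. The three-way split into $T_v w$, $T_w v$, and $R(v,w)$, the use of the annulus localization to restrict to $|j-k|\leq N_0$ in the paraproduct, and the use of $s>0$ to make the geometric tail $\sum_{\ell\leq N_0}2^{\ell s}$ summable in the remainder are all exactly what is needed, and you correctly identify $s>0$ as the crux. Two very minor remarks: in the remainder bound the $j$-th factor of $w$ should really be $\widetilde{\Delta}_j w=\sum_{|m-j|\leq 1}\Delta_m w$ rather than $\Delta_j w$, but this is absorbed into the constant $N_0$ by a harmless relabeling; and Young's convolution inequality for $\ell^1\ast\ell^r\hookrightarrow\ell^r$ does not actually require $1<r<\infty$ (it holds for all $1\leq r\leq\infty$), so that restriction, while true under the lemma's hypotheses, is not where $r$'s range is used.
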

\begin{proof}
See Corollary 2.86 in \cite{danchinbuch}.
\end{proof}

\subsection{$\mathcal R$-Boundedness, $\mathcal R$-Sectoriality and $\mathcal H^\infty$-Calculus}
We first define the notion of sectorial operators as in Definition 3.1.1 in \cite{pruessbuch}.
\begin{definition}
Let $X$ be a complex Banach space and $A$ be a closed linear operator on $X$. Then $A$ is said to be \textit{sectorial}, if both domain and range of $A$ are dense in $X$, the resolvent set of $A$ contains $(-\infty,0)$, and there is some $C>0$ such that $|t(t+A)^{-1}|_{\mathcal L(X)} \leq C$ for all $t>0.$
\end{definition}

The concept of $\mathcal R$-bounded families of operators is next. We refer to Definition 4.1.1 in \cite{pruessbuch}.
\begin{definition}
Let $X$ and $Y$ be Banach spaces and $\mathcal T \subseteq \mathcal L(X,Y)$. We say that $T$ is $\mathcal R$-bounded, if there is some $C>0$ and $p \in [1,\infty)$, such that for each $N \in \mathbb N, \{ T_j : j = 1,...,N \} \subseteq \mathcal T$, $\{ x_j : j = 1,...,N \} \subseteq X$ and for all independent, symmetric, $\pm 1$-valued random variables $\varepsilon_j$ on a probability space $(\Omega, \mathcal A, \mu)$ the inequality
\begin{equation} \label{rbound}
\left| \sum_{j=1}^N \varepsilon_j T_j x_j \right|_{L^p(\Omega;Y)} \leq C \left| \sum_{j=1}^N \varepsilon_j  x_j \right|_{L^p(\Omega;X)}
\end{equation}
is valid. The smallest $C>0$ such that \eqref{rbound} holds is called $\mathcal R$-bound of $\mathcal T$ and denote it by $\mathcal R(\mathcal T).$
\end{definition}

We can now define $\mathcal R$-sectoriality of an operator as is done in Definition 4.4.1 in \cite{pruessbuch}.
\begin{definition}
Let $X$ be a Banach space and $A$ a sectorial operator on $X$. It is then said to be $\mathcal R$-sectorial, if $\mathcal R_A(0) := \mathcal R \{ t(t+A)^{-1} : t > 0 \}$ is finite. We can then define the $\mathcal R$-angle of $A$ by means of $\phi_A^R := \inf \{ \theta \in (0,\pi) : \mathcal R_A (\pi - \theta) < \infty \}.$ Here, $\mathcal R_A ( \theta)  := \mathcal R \{ \lambda (\lambda + A)^{-1} : |\arg \lambda| \leq \theta \}$.
\end{definition}

We now define the important class of operators which admit a bounded $\mathcal H ^\infty$-calculus as in Definition 3.3.12 in \cite{pruessbuch}. For the well known Dunford functional calculus and an extension of which we refer to Sections 3.1.4 and 3.3.2 in \cite{pruessbuch}.
Let $0 < \phi \leq \pi$ and $\Sigma_\phi := \{ z \in \mathbb C : | \arg z | < \phi \}$ be the open sector with opening angle $\phi$. Let $H(\Sigma_\phi)$ be the set of all holomorphic functions $f : \Sigma_\phi \pfeil \mathbb C$ and $H^\infty(\Sigma_\phi)$ the subset of all bounded functions of $H(\Sigma_\phi).$ The norm in $H^\infty(\Sigma_\phi)$ is given by
\begin{equation}
| f |_{H^\infty (\Sigma_\phi)} := \sup \{ | f(z) | : z \in \Sigma_\phi \}.
\end{equation}
Furthermore let
\begin{equation}
H_0 (\Sigma_\phi) := \bigcup_{\alpha, \beta < 0} H_{\alpha,\beta}(\Sigma_\phi) ,
\end{equation}
where
$H_{\alpha,\beta}(\Sigma_\phi) := \{ f \in H(\Sigma_\phi) : |f|_{\alpha,\beta}^\phi < \infty \}$, and $|f|^\phi_{\alpha,\beta} := \sup \{ |z^\alpha f(z) | : |z| \leq 1 \} + \sup \{ |z^{-\beta} f(z) | : |z| \geq 1 \}$.
\begin{definition}
Let $X$ be a Banach space and $A$ a sectorial operator on $X$. Then $A$ admits a bounded $\mathcal H ^ \infty$-calculus, if there are $\phi > \phi_A$ and a constant $K_\phi < \infty$, such that
\begin{equation} \label{4958677589}
|f(A)|_{\mathcal L(X)} \leq K_\phi | f|_{H ^\infty (\Sigma_\phi)}
\end{equation}
for all $f \in H_0(\Sigma_\phi)$. The class of operators admitting a bounded $\mathcal H^\infty$-calculus on $X$ will be denoted by $\mathcal H^\infty (X)$. The $\mathcal H^\infty$-angle of $A$ is defined by the infimum of all $\phi > \phi_A$, such that \eqref{4958677589} is valid, $\phi_A^\infty := \inf \{ \phi > \phi_A : \eqref{4958677589} \text{ holds} \}.$
\end{definition}

\subsection{Maximal Regularity}
Let us recall the property of an operator having maximal $L_p$-regularity as is done in Definition 3.5.1 in \cite{pruessbuch}.
\begin{definition}
Let $X$ be a Banach space, $ J = (0,T), 0 < T < \infty$ or $J = \mathbb R_+$ and $A$ a closed, densely defined operator on $X$ with domain $D(A) \subseteq X.$ Then the operator $A$ is said to have maximal $L_p$-regularity on $J$, if and only if for every $f \in L_p(J;X)$ there is a unique $u \in W^1_p(J;X) \cap L_p(J;D(A))$ solving 
\begin{equation}
\frac{d}{dt}u (t) + A u (t)= f(t), \quad t \in J, \qquad u|_{t=0} = 0,
\end{equation}
in an almost-everywhere sense in $L_p(J;X)$.
\end{definition}
There is a wide class of results on operators having maximal regularity, we refer to sections 3.5 and 4 in \cite{pruessbuch} for further discussion. For results on $\mathcal R$-boundedness and interpolation we refer to \cite{kaipsaal}.

\section{Reduction to a Fixed Reference Surface}
In this section we transform the problem \eqref{9283486}-\eqref{9283486Z} to a fixed reference configuration. To this end we construct a suitable Hanzawa transfrom, taking into account the possibly curved boundary of $\partial\Omega$, by locally introducing curvilinear coordinates.

 Let $\Sigma \subset \Omega$ be a smooth reference surface and $\partial\Omega$ be smooth at least in a neighbourhood of $\partial\Sigma$. Furthermore, let $\angle(\Sigma,\partial\Omega) = \pi/2$ on $\partial\Sigma$. From Proposition 3.1 in \cite{vogel} we get the existence of so called curvilinear coordinates at least in a small neighbourhood of $\Sigma$, that is, there is some possibly small $a>0$ depending on the curvature of $\Sigma$ and $\partial\Omega$, such that
\begin{equation}
X : \Sigma \times (-a,a) \pfeil \R^n, \quad (p,w) \mapsto X(p,w),
\end{equation}
is a smooth diffeomorphism onto its image and $X(.,.)$ is a curvilinear coordinate system. This means in particular that points on the boundary $\partial\Omega$ only get transported along the boundary, $X(p,w) \in \partial\Omega$ for all $p \in \partial\Sigma, w \in (-a,a)$. We need to make use of these coordinates since the boundary $\partial\Omega$ may be curved. Therefore a transport only in normal direction of $n_\Sigma$ is not sufficient here. For details we refer to \cite{vogel}. 

With the help of these coordinates we may parametrize the free interface as follows.
We assume that at time $t \geq 0$, the free interface is given as a graph over the reference surface $\Sigma$, that is, there is some $h : \Sigma \times [0,T] \pfeil (-a,a)$, such that
\begin{equation}
\Gamma(t) = \Gamma_h (t) := \{ X(p,h(p,t)) : p \in \Sigma \}, \quad t \in [0,T],
\end{equation}
for small $T > 0$, at least.
With the help of this coordinate system we may construct a Hanzawa-type transform as follows.

Let $\chi \in C_0^\infty(\R)$ be a fixed function satisfying $\chi(s)=1$ for $|s| \leq 1/3$, $\chi(s) = 0$ for $|s| \geq 2/3$ and $|\chi'(s)|\leq 4$ for all $s \in \R$ and $\Sigma_a := X(\Sigma \times (-a,a))$. Then for a given height function $h : \Sigma \pfeil (-a,a)$ describing an interface $\Gamma_h$ we define
\begin{equation}
\Theta_h (x) := \begin{cases}
x, & x \not\in \Sigma_a, \\ 
(X \circ F_h \circ X^{-1} )(x), & x \in \Sigma_a, \end{cases}
\end{equation}
where
\begin{equation}
F_h (p,w) := \left( p, w - \chi( (w-h(p))/a ) h(p) \right), \quad p \in \Sigma, w \in (-a,a).
\end{equation}
Recall that by properties of the curvilinear coordinate system, we have $\Sigma = \{ x \in \R^n : x = X(p,0), p \in \Sigma \}.$ Let 
\begin{equation}
\mathcal U := \{ h \in X_\gamma : |h|_{L_\infty(\Sigma)} < a/5 \}.
\end{equation}
 Then we have the following result.
\begin{theorem}
For fixed $h \in \mathcal U$, the transformation $\Theta_h : \Omega \pfeil \Omega$ is a $C^1$-diffeomorphism satisfying $\Theta_h ( \Gamma_h ) = \Sigma$.
\end{theorem}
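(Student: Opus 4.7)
The plan is to reduce the entire statement to properties of the fibre-preserving map $F_h$ on $\Sigma \times (-a,a)$. Outside $\Sigma_a$ the map $\Theta_h$ is the identity by definition, and inside $\Sigma_a$ it is the conjugate $X \circ F_h \circ X^{-1}$ of $F_h$ by the smooth curvilinear diffeomorphism $X$ from \cite{vogel}. Hence it is enough to establish that $F_h$ is a $C^1$-diffeomorphism of $\Sigma \times (-a,a)$ onto itself which coincides with the identity in a neighbourhood of $\Sigma \times \{\pm a\}$ and leaves the lateral face $\partial \Sigma \times (-a,a)$ invariant. Since the choice of $p$ and $q$ ensures the embedding $X_\gamma \into C^2(\Sigma)$, every $h \in \mathcal U$ is at least $C^1$, so $F_h$ is $C^1$ by composition.

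First I would verify invertibility of $F_h$ fibre by fibre. Differentiating the second component, I obtain
\[
\partial_w \bigl[ w - \chi\bigl((w-h(p))/a\bigr)\, h(p) \bigr] = 1 - \frac{h(p)}{a}\, \chi'\!\left( \frac{w-h(p)}{a} \right).
\]
Using $\|\chi'\|_\infty \leq 4$ and $|h|_{L_\infty(\Sigma)} < a/5$ from the definition of $\mathcal U$, this quantity is bounded below by $1 - 4/5 = 1/5 > 0$. Consequently $w \mapsto w - \chi((w-h(p))/a)\, h(p)$ is strictly increasing on $(-a,a)$ for each fixed $p$. Moreover, $|h(p)|/a < 1/5$ combined with $|w| \leq a$ yields $|w-h(p)|/a \geq 2/3$ whenever $|w| \geq 13a/15$, so $\chi$ vanishes and $F_h(p,w) = (p,w)$ for such $w$; in particular the second component of $F_h(p,\cdot)$ tends to $\pm a$ as $w \to \pm a$. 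The inverse function theorem then promotes these fibrewise properties to the statement that $F_h$ is a $C^1$-diffeomorphism of $\Sigma \times (-a,a)$ onto itself.

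Next I would handle the gluing of the two pieces in the definition of $\Theta_h$. Because $F_h$ already agrees with the identity on an open neighbourhood of $\Sigma \times \{\pm a\}$, the two branches in the definition of $\Theta_h$ overlap on an open neighbourhood of $\partial \Sigma_a \setminus \partial \Omega$ and both equal the identity there; hence $\Theta_h$ is globally $C^1$, and applying the same argument to $F_h^{-1}$ produces a $C^1$ inverse, so $\Theta_h$ is a $C^1$-diffeomorphism of $\Omega$ onto itself. The inclusion $X(\partial \Sigma \times (-a,a)) \subset \partial \Omega$ from the construction of curvilinear coordinates, together with the fact that $F_h$ leaves the first component untouched, shows that $\partial \Omega$ is sent into $\partial \Omega$, so the image of $\Theta_h$ genuinely lies in $\Omega$. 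Finally the identity $\Theta_h(\Gamma_h) = \Sigma$ reduces to the elementary computation
\[
F_h(p, h(p)) = \bigl( p, h(p) - \chi(0)\, h(p) \bigr) = (p, 0), \qquad p \in \Sigma,
\]
with the converse inclusion following from the fibrewise strict monotonicity established above.

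The main obstacle I anticipate is the bookkeeping around the two boundaries: the interior boundary $\partial \Sigma_a \setminus \partial \Omega$, where one has to check that the piecewise definition actually produces a $C^1$ map and not merely a continuous one; and the fixed outer boundary $\partial \Omega$, where the curvilinear nature of $X$, rather than a plain normal transport, is indispensable. Both are handled precisely by the size restriction $|h|_\infty < a/5$, the support properties of $\chi$, and the tangent-preserving property of $X$ recorded in the setup.
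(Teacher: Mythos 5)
Your proposal is correct and follows essentially the same route as the paper's proof, reducing the statement to properties of the fibre map $F_h$, checking $F_h(p,h(p))=(p,0)$, and using the bound $|h|_{L_\infty(\Sigma)}<a/5$ together with $\|\chi'\|_{\infty}\le 4$ to control the $w$-derivative of the second component of $F_h$. The paper merely asserts that $DF_h$ (hence $D\Theta_h$) is invertible and that the rest is "easy to check," whereas you supply the explicit lower bound $1/5$, establish fibrewise surjectivity onto $(-a,a)$ via the support of $\chi$, and address the $C^1$ gluing across $\partial\Sigma_a$ and the preservation of $\partial\Omega$ via the tangential structure of the curvilinear coordinates---points the paper leaves implicit.
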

\begin{proof}
The proof is straightforward. It is easy to check that for $x \in \Gamma_h$ we have that $\Theta_h (x) = X(p,0)$, where $p \in \Sigma$ is determined by the identity $x = X(p,h(p))$. Hence $\Theta_h (\Gamma_h) = \Sigma$. Furthermore it is easy to see that $DF_h$ and hence $D\Theta_h$ is invertible in every point which concludes the proof since $X_\gamma \into C^2(\Sigma)$.
\end{proof}
The following lemma gives a decomposition of the transformed curvature operator $K(h) := H_{\Gamma_h} \circ \Theta_h$ for $h \in \mathcal U$. The result and proof are an adpation of the work in Lemma 2.1 in \cite{abelswilke} and Lemma 3.1 in \cite{eschersimonett}.
\begin{lemma} \label{9286fg349840}
Let $n=2,3$, $q \in (3/2,2), p > 3/(2-3/q)$ and $\mathcal U \subset X_\gamma$ be as before. Then there are functions
\begin{equation}
P \in C^1(\mathcal U, \mathcal B ( W^{4-1/q}_q(\Sigma), W^{2-1/q}_q (\Sigma)), \quad Q \in C^1( \mathcal U,  W^{2-1/q}_q (\Sigma)),
\end{equation}
such that
\begin{equation}
K(h) = P(h)h + Q(h), \quad \text{for all } h \in \mathcal U \cap W^{4-1/q}_q(\Sigma).
\end{equation}
Moreover,
\begin{equation}
P(0) = - \Delta_\Sigma,
\end{equation}
where $\Delta_\Sigma$ denotes the Laplace-Beltrami operator with respect to the surface $\Sigma$.
\end{lemma}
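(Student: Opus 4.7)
The starting point is an explicit coordinate computation of $K(h)$. I would cover $\Sigma$ by finitely many coordinate patches $\phi_\alpha: U_\alpha \subset \R^{n-1} \to \Sigma$ subordinate to a partition of unity, and in each patch parametrize $\Gamma_h$ by $s \mapsto \Psi_h(s) := X(\phi_\alpha(s), h(\phi_\alpha(s)))$. A direct computation of the first fundamental form $g_{ij}(h) = \langle \partial_i \Psi_h, \partial_j \Psi_h \rangle$ and of the (unnormalised) normal $\nu_h$ shows that both depend smoothly on $h$ and $\nabla h$ through the derivatives of $X$. The second fundamental form $b_{ij}(h) = \langle \partial_i \partial_j \Psi_h, \nu_h/|\nu_h|\rangle$ then depends on $h$, $\nabla h$ and $\nabla^2 h$, with the dependence on $\nabla^2 h$ linear and coming solely from the term $\langle \partial_w X, \nu_h/|\nu_h|\rangle \,\partial_i\partial_j(h\circ \phi_\alpha)$.

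Collecting everything, $K(h) = g^{ij}(h)\, b_{ij}(h)$ admits the quasilinear representation
\begin{equation*}
K(h) = a^{ij}(h,\nabla h)\,\partial_i\partial_j h + R(h,\nabla h)
\end{equation*}
in local coordinates, with $a^{ij},R$ smooth functions of their arguments (determined by $X$). I would then introduce $P(h)$ as the second-order differential operator
\begin{equation*}
P(h) u := a^{ij}(h,\nabla h)\,\partial_i\partial_j u + c^i(h,\nabla h)\,\partial_i u + d(h,\nabla h)\,u,
\end{equation*}
where $c^i$ and $d$ are smooth functions chosen so that $a^{ij}(0,0)$, $c^i(0,0)$, $d(0,0)$ reproduce the local expression of $-\Delta_\Sigma$ (in particular $a^{ij}(0,0) = -g^{ij}_\Sigma$ and $c^i(0,0)$ supplies the Christoffel contribution). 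By construction $P(0) = -\Delta_\Sigma$, and
\begin{equation*}
Q(h) := K(h) - P(h) h = R(h,\nabla h) - c^i(h,\nabla h)\,\partial_i h - d(h,\nabla h)\,h
\end{equation*}
depends only on $h$ and $\nabla h$, not on $\nabla^2 h$. The partition of unity glues the local prescriptions to globally defined $P(h)$ and $Q(h)$ on $\Sigma$.

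The regularity assertions rest on the embedding $X_\gamma \into C^2(\Sigma)$, which the assumptions $q\in(3/2,2)$, $p > 3/(2-3/q)$ and $n\in\{2,3\}$ precisely guarantee, together with Lemma \ref{danchinlemma}. For $h \in \mathcal U$, $\nabla h$ lies in a Besov space $B^{3-1/q-3/p}_{qp}(\Sigma)$ which under the stated conditions embeds continuously into $W^{2-1/q}_q(\Sigma)$. Nemytskii estimates for smooth scalar functions applied above the continuity threshold then place the coefficients $a^{ij}(h,\nabla h)$, $c^i(h,\nabla h)$, $d(h,\nabla h)$ and $R(h,\nabla h)$ into $W^{2-1/q}_q(\Sigma)$. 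Since $q > 3/2$ ensures that $W^{2-1/q}_q(\Sigma)$ is a multiplicative algebra in view of Lemma \ref{danchinlemma}, pointwise multiplication by these coefficients induces bounded operators $W^{4-1/q}_q(\Sigma) \to W^{2-1/q}_q(\Sigma)$, which gives $P(h) \in \mathcal B(W^{4-1/q}_q(\Sigma), W^{2-1/q}_q(\Sigma))$ and $Q(h) \in W^{2-1/q}_q(\Sigma)$. The $C^1$-dependence on $h$ follows from the corresponding $C^1$-regularity of Nemytskii operators for smooth nonlinearities on bounded subsets of the relevant Besov scale.

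The main obstacle I anticipate is the careful bookkeeping of this $C^1$-dependence valued in the operator space $\mathcal B(W^{4-1/q}_q(\Sigma), W^{2-1/q}_q(\Sigma))$, which couples the paraproduct algebra property of Lemma \ref{danchinlemma} with sharp Nemytskii estimates in the Besov scale. A secondary delicate point is the correct handling of the curvilinear coordinates $X$ near $\partial\Sigma$, which is genuinely needed here (as opposed to the plain normal parametrization of \cite{eschersimonett}) because $\partial\Omega$ may be curved in a neighbourhood of $\partial\Sigma$, so that every geometric quantity above carries an additional dependence on $X$ and its derivatives.
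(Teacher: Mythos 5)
Your proposal is correct and reaches the same decomposition, but it computes $K(h)$ by a different (though equally classical) route from the paper. The paper follows Escher–Simonett's level-set formulation: it regards $\Gamma_h$ as the zero level set of $\Phi_h(s,r) = r - h(s)$ on the Riemannian manifold $(\Sigma\times(-a,a), g_X)$ with $g_X$ the pullback metric under the curvilinear coordinates $X$, applies the level-set formula $K(h) = \|\nabla_X\Phi_h\|_X^{-1}\bigl(\Delta_X\Phi_h - \|\nabla_X\Phi_h\|_X^{-2}[\operatorname{hess}_X\Phi_h](\nabla_X\Phi_h,\nabla_X\Phi_h)\bigr)$, and then extracts the explicit coefficients $a_{jk}(h), a_j(h), a(h)$ by writing out $\nabla_X, \Delta_X, \operatorname{hess}_X$ in local coordinates, noting carefully that the orthogonality relations of Escher–Simonett fail off $\Sigma$ because $X$ carries a tangential correction. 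You instead pass directly to the embedded parametrization $\Psi_h = X(\cdot, h(\cdot))$ and compute the first and second fundamental forms $g_{ij}(h), b_{ij}(h)$ of $\Gamma_h$, isolating the second-order term from $\langle \partial_w X, \nu_h/|\nu_h|\rangle\,\partial_i\partial_j h$. Both routes yield the same quasilinear structure; the paper's formulation makes the comparison with \cite{eschersimonett} and the identification $P(0)=-\Delta_\Sigma$ most transparent, while yours is perhaps more elementary in that it avoids the differential operators on the ambient product manifold. A minor cosmetic difference: you allow a zeroth-order term $d(h,\nabla h)u$ in $P(h)$ whereas the paper puts it into $Q(h)$ — this is a bookkeeping choice and both yield valid decompositions with $P(0)=-\Delta_\Sigma$. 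For the regularity claims, the paper simply defers to Lemma 2.1 of \cite{abelswilke} (noting $X_\gamma\into C^2(\Sigma)$), whereas you spell out the Nemytskii/Besov mechanism via Lemma \ref{danchinlemma} — that amount of detail is consistent with what the cited lemma in \cite{abelswilke} carries out, and the threshold $q>3/2$ you invoke to make $W^{2-1/q}_q(\Sigma)$ an algebra is exactly what is needed.
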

\begin{remark}
Note that the orthogonality relations (3.2) in \cite{eschersimonett} do not hold if we take $X$ to be curvilinear coordinates, since in $X$ we not only have a variation in normal but also in tangential direction. Therefore we have to modify the proofs in \cite{abelswilke},\cite{eschersimonett}. 
\end{remark}
\begin{proof}
The curvilinear coordinates $X$ are of form
\begin{equation} \label{93467858937465}
X = X(s,r) = s + r n_\Sigma(s) + \tau(s,r)\vec T(s), \quad s \in \Sigma, r \in (-a,a),
\end{equation}
where the tangential correction $\tau \vec T$ is as in \cite{vogel}. More precisely, $n_\Sigma$ denotes the unit normal vector field of $\Sigma$ with fixed orientation, $\vec T$ is a smooth vector field defined on the closure of $\Sigma$ with the following properties: it is tangent to $\Sigma$, normal to $\partial\Sigma$, of unit length on $\partial\Sigma$ and vanishing outside a neighbourhood of $\partial\Sigma$. In particular, $\vec T$ is bounded. Furthermore, $\tau = \tau(s,r)$ is a smooth scalar function such that $X(s,r)$ lies on $\partial\Omega$ whenever $s \in \partial\Sigma$. It satisfies $\tau(s,0)=0$ for all $s \in \Sigma$. Moreover, since $\Sigma$ and $\partial\Omega$ have a ninety degree contact angle, we have that
\begin{equation} \label{9237648634}
\partial_r \tau(s,0) = 0, \quad s \in \partial\Sigma.
\end{equation}
Hence we may choose $\tau$ in \cite{vogel} to satisfy \eqref{9237648634} for all $s \in \Sigma$. We will now derive a formula for the transformed mean curvature $K(h)$ in local coordinates. We follow the arguments of \cite{eschersimonett}.

The surface $\Gamma_h(t)$ is the zero level set of the function
\begin{equation}
\phi_h(x,t) := (X^{-1})_2(x) - h( (X^{-1})_1 (x),t), \quad x \in \Sigma_a, t \in \R_+,
\end{equation}
whence we define 
\begin{equation}
\Phi_h(s,r) := \phi_h ( X(s,r), t) = r - h(s,t), \quad s \in \Sigma, r \in (-a,a).
\end{equation}
We obtain that since $X : \Sigma \times (-a,a) \pfeil \R^n$ is a smooth diffeomorphism onto its image it induces a Riemannian metric $g_X$ on $\Sigma \times (-a,a)$. We denote the induced differential operators gradient, Laplace-Beltrami and the hessian with respect to $(\Sigma \times (-a,a), g_X)$ by $\nabla_X, \Delta_X$ and $\operatorname{hess}_X$. As in equation (3.1) in \cite{eschersimonett} we find that
\begin{equation}
K(h)|_s = \frac{1}{\| \nabla_X \Phi_h \|_X} \left( \Delta_X \Phi_h - \frac{[\operatorname{hess}_X \Phi_h] (\nabla_X \Phi_h, \nabla_X \Phi_h )}{\| \nabla_X \Phi_h \|_X^2 } \right)\big|_{(s,h(s))},
\end{equation}
for all $s \in \Sigma$, where $\| \nabla_X \Phi_h \|_X := ( g_X (\nabla_X \Phi_h, \nabla_X \Phi_h ))^{1/2}.$ Note at this point that since $X$ induces also a variation in tangential direction, the orthogonality relations (3.2) in \cite{eschersimonett} do not hold in general. However, we get in local coordinates that
\begin{equation}
( \partial_j X | \partial_n X ) = ( \partial_j X | n_\Sigma ) + \partial_r \tau ( \partial_j X | \vec T ), \quad j \in \{1,...,n-1 \},
\end{equation}
and $( \partial_n X | \partial_n X) = 1 + (\partial_r \tau)^2 ( \vec T | \vec T).$ In particular we see that on the surface $\Sigma$ the relations (3.2) in \cite{eschersimonett} still hold, but not away from $\Sigma$ in general. By using well-known representation formulas for $\nabla_X$, $\Delta_X$, and $\operatorname{hess}_X$ in local coordinates, one finds that
\begin{equation}
K(h)|_s = \left( \sum_{j,k = 1}^{n-1} a_{jk}(h) \partial_j \partial_k h + \sum_{j=1}^{n-1} a_j(h)\partial_j h + a(h) \right)|_{(s,h(s))},
\end{equation}
where
\begin{align}
a_{jk}(h) &= \frac{1}{\ell_X(h)^3} \bigg( - \ell_X (h)^2 w^{jk} + w^{jn} w^{kn} - \sum_{l=1} g^{jl}g^{kn} \partial_l h  \\ &- \sum_{l=1}^{n-1} g^{jn} g^{kl} \partial_l h + \sum_{l,m = 1}^{n-1} g^{jm} g^{kl} \partial_l h \partial_m h \bigg),
\end{align}
as well as
\begin{align}
a_j(h) &= \frac{1}{\ell_X(h)^3} \bigg( \ell_X(h)^2 \sum_{l,k =1}^n \Gamma^j_{lk} w^{lk}  - \sum_{q,k=1}^{n-1} \sum_{i,l=1}^n \Gamma^k_{il} w^{iq}w^{lj} \partial_k h \partial_q h \\
&+ \sum_{q=1}^{n-1} \sum_{i,l=1}^n \Gamma^n_{il} w^{iq} w^{lj} \partial_q h + \sum_{k=1}^{n-1} \sum_{i,l=1}^n \Gamma^k_{il} w^{in} w^{lj} \partial_k h - \sum_{i,l=1}^n \Gamma^n_{il} w^{in} w^{lj} \\
& + \sum_{k=1}^{n-1} \sum_{i,l=1}^n \Gamma^k_{il} w^{ij} w^{ln} \partial_k h 
- \sum_{i,l=1}^n \Gamma^n_{il} w^{ij} w^{ln} - \sum_{i,l=1}^n \Gamma^j_{il} w^{in} w^{ln}\bigg),
\end{align}
and 
\begin{equation}
a(h) =- \frac{1}{\ell_X(h)} \sum_{j,k=1}^n \Gamma^n_{jk} w^{jk} + \frac{1}{\ell_X(h)^3} \sum_{i,j=1}^n \Gamma^n_{ij} w^{in} w^{jn},
\end{equation}
where $w_{ij} := (\partial_i X| \partial_j X)$, $(w^{ij}) = (w_{ij})^{-1}$ and $\ell_X(h) := \| \nabla_X \Phi_h \|_X$.
Let
\begin{align} 
P(h)|_s &=  \left( \sum_{j,k = 1}^{n-1} a_{jk}(h) \partial_j \partial_k  + \sum_{j=1}^{n-1} a_j(h)\partial_j \right)|_{(s,h(s))}, \label{formulap} \\
Q(h)|_s &=    a(h)|_{(s,h(s))},
\end{align}
in local coordinates. Mimicking the proof of Lemma 2.1 in \cite{abelswilke}, $K(h) = P(h)h + Q(h)$ is the desired decompostion of $K$, since $X_\gamma \into C^2(\Sigma)$. The fact that $P(0) =- \Delta_\Sigma$ follows from \eqref{formulap} and the formulas for $a_{jk}$ and $a_j$.
\end{proof}
We are now able now transform the problem \eqref{9283486}-\eqref{9283486Z} to a fixed reference domain $\Omega \backslash \Sigma$ by means of the Hanzawa transform. This however yields a highly nonlinear problem for the height function. The transformed differential operators are given by
\begin{equation}
\nabla_h := (D\Theta_h^t)^\top \nabla, \quad \div_h u := \operatorname{Tr}(\nabla_h u), \quad \Delta_h := \div_h \nabla_h,
\end{equation}
and the transformed normal by $n_{\partial\Omega}^h := n_{\partial\Omega} \circ \Theta_h^t$. This leads to the equivalent system
\begin{subequations} \label{fullnonlienar8765}
\begin{align}
\partial_t h &= - \ljump n_{\Gamma_h(t)} \cdot \nabla_h \eta \rjump + (\beta(h)|n_{\Gamma_h(t)}-n_\Sigma), &&\text{on } \Sigma, \label{456356} \\
\eta|_{\Sigma} &= K(h), && \text{on } \Sigma, \\
\Delta_h \eta &= 0, && \text{in } \Omega \backslash \Sigma, \\
n_{\partial\Omega}^h \cdot \nabla_h \eta|_{\partial\Omega} &= 0, && \text{on } \partial\Omega, \\
n_{\partial\Omega}^h \cdot n_{\Gamma_h(t)} &= 0, && \text{on } \partial\Sigma, \label{2938dfg623} \\
h|_{t=0} &= h_0, && \text{on } \Sigma, \label{456356Z}
\end{align}
where $h_0$ is a suitable description of the initial configuration such that $\Gamma|_{t=0} = \Gamma_0$ and $\beta(h) := \partial_t h n_\Sigma + \partial_r \tau \vec T$, cf. \eqref{93467858937465}. Note that by the initial condition \eqref{9283486Z} we have that $n_{\partial\Omega}^{h_0} \cdot n_{\Gamma_{h_0}} = 0$, which is a necessary compatibility condition for the system \eqref{456356}-\eqref{456356Z}.
\end{subequations}

The following lemma states important differentiability properties of the transformed differential operators. 
\begin{lemma} \label{93845703746534}
Let $n=2,3$, $q \in (3/2,2)$, $p > 3/(3-4/q)$ and $\mathcal U \subset X_\gamma$ as before. Then
\begin{gather}
[ h \mapsto \Delta_h ] \in C^1( \mathcal U ; \mathcal B ( W^2_q (\Omega \backslash \Sigma ) ; L_q(\Omega))), \\
[ h \mapsto \nabla_h ] \in C^1( \mathcal U ; \mathcal B ( W^k_q (\Omega \backslash \Sigma ) ; W^{k-1}_q(\Omega \backslash \Sigma))), \quad k=1,2, \\
[ h \mapsto n_\Sigma^h ], \; [h \mapsto n_{\partial\Omega}^h] \in C^1(\mathcal U ;C^1(\Sigma)).
\end{gather}
\end{lemma}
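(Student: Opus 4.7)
The plan is to exploit the explicit coordinate description of the Hanzawa transform $\Theta_h$ to write the transformed operators as Nemytskii-type compositions in $h$ and its derivatives, and then to read off the regularity from the Sobolev embeddings of $X_\gamma$ together with standard multiplier estimates.

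The first step is to verify that under the assumptions $q \in (3/2,2)$ and $p > 3/(3-4/q)$ the trace space embeds as $X_\gamma \hookrightarrow C^1(\Sigma) \cap W^2_n(\Sigma)$. Indeed, the condition on $p$ is equivalent to $4 - 1/q - 3/p > 1 + 3/q$, so $X_\gamma \hookrightarrow B^{1+3/q}_{qp}(\Sigma)$, from which both embeddings follow by standard Besov-to-Sobolev theory on $\Sigma$ (which has dimension $n-1 \in \{1,2\}$). Next, from the formula $F_h(s,w) = (s, w - \chi((w-h(s))/a)\,h(s))$ one reads off that the entries of $DF_h$, and hence of the matrix $A(h) := (D\Theta_h)^{-\top}$, are smooth functions of $(h(s), \nabla_\Sigma h(s), w)$ composed with a fixed smooth cutoff. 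The chain rule for Nemytskii operators then shows that $h \mapsto A(h)$ is $C^1$ from $\mathcal U$ into $C^0(\overline{\Omega}; \mathbb R^{n \times n})$, while $h \mapsto \partial A(h)$ is $C^1$ from $\mathcal U$ into $L_n(\Omega; \mathbb R^{n\times n\times n})$.

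With these preparations, the statements on $\nabla_h$ and on the normals are quick. From $\nabla_h u = A(h)\nabla u$, multiplication by $A(h) \in L_\infty$ maps $L_q$ to $L_q$, settling $k=1$; for $k=2$ the product rule combined with $\partial A(h) \in L_n(\Omega)$ and the Sobolev embedding $W^1_q(\Omega\setminus\Sigma) \hookrightarrow L_{q^*}$ with $1/q^* = 1/q - 1/n$ shows that $A(h)\nabla u$ lies in $W^1_q(\Omega\setminus\Sigma)$ with $C^1$-dependence on $h$. For the normals, $n_{\partial\Omega}^h$ and $n_\Sigma^h$ arise as smooth functions of $(h, \nabla_\Sigma h)$ (via the composition with $\Theta_h$ and via the standard graph-normal formula, respectively), so the embedding $X_\gamma \hookrightarrow C^1(\Sigma)$ delivers the asserted $C^1(\mathcal U; C^1(\Sigma))$-regularity.

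The main obstacle is the $\Delta_h$ statement, since it involves second derivatives of $\Theta_h$, hence $\partial^2 h$, acting on first derivatives of $u$. Expanding
\[
\Delta_h u = \sum_{i,j,k} A(h)_{ij} A(h)_{ik}\, \partial_j \partial_k u + \sum_{i,j,k} A(h)_{ij}\, \partial_j A(h)_{ik}\, \partial_k u,
\]
the first sum lies in $L_q$ by $L_\infty$-boundedness of $A(h)$; for the second, $A(h)\,\partial A(h) \in L_n(\Omega)$ paired with $\partial u \in W^1_q(\Omega\setminus\Sigma) \hookrightarrow L_{q^*}$, $1/q^*=1/q-1/n$, gives an $L_q$-bound via H\"older's inequality. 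This is precisely where the hypothesis $p > 3/(3-4/q)$ is needed, since it is calibrated so that $X_\gamma \hookrightarrow W^2_n(\Sigma)$, yielding $\partial A(h) \in L_n(\Omega)$. The $C^1$-dependence in $h$ then follows from the bilinear structure of the two sums together with the $C^1$-dependence of $A(h)$ and $\partial A(h)$ in the respective target spaces.
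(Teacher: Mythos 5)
Your argument for $\nabla_h$ and $\Delta_h$ is a genuine refinement of the paper's one-line proof. The paper simply invokes $X_\gamma \hookrightarrow C^2(\Sigma)$ and refers to the corresponding calculations in \cite{abelswilke}; you instead observe that the stated hypothesis $p > 3/(3-4/q)$ only guarantees $X_\gamma \hookrightarrow C^1(\Sigma) \cap W^2_n(\Sigma)$ (your algebra $4-1/q-3/p > 1 + 3/q \Leftrightarrow p > 3/(3-4/q)$ and the subsequent Besov embeddings on the $(n-1)$-dimensional $\Sigma$ are correct), and compensate the missing continuity of $D^2 h$ by the H\"older pairing of $\partial A(h) \in L_n$ with $\nabla u \in L_{q^*}$, $1/q^* = 1/q - 1/n$. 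This is the honest argument under the hypothesis as written: for $n = 3$ the condition $p > 3/(3-4/q)$ is strictly weaker than $p > 3/(2-3/q)$, which is what $X_\gamma \hookrightarrow C^2(\Sigma)$ would actually require, so the paper's one-liner silently assumes more than the lemma states.

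There is, however, a gap in your treatment of $n_\Sigma^h$. You correctly identify $n_\Sigma^h$ as a smooth Nemytskii expression in $(h, \nabla_\Sigma h)$ via the graph-normal formula, and then conclude from $X_\gamma \hookrightarrow C^1(\Sigma)$ that $n_\Sigma^h \in C^1(\Sigma)$. But $\nabla_\Sigma n_\Sigma^h$ contains a term proportional to $\nabla_\Sigma^2 h$, which under $X_\gamma \hookrightarrow C^1(\Sigma) \cap W^2_n(\Sigma)$ lies only in $L_n(\Sigma)$, not in $C^0(\Sigma)$. To land in the target $C^1(\mathcal U; C^1(\Sigma))$ one needs $h \in C^2(\Sigma)$ --- exactly the embedding you set out to avoid. (The claim for $n_{\partial\Omega}^h$ is unaffected, since $\Theta_h$ involves only $h$, not $\nabla_\Sigma h$, pointwise, so $\nabla_\Sigma n_{\partial\Omega}^h$ contains at most $\nabla_\Sigma h$, which is continuous.) As written the proposal therefore does not close this case; one must either weaken the target space for $n_\Sigma^h$ or strengthen the hypothesis on $p$ to recover $X_\gamma \hookrightarrow C^2(\Sigma)$.
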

\begin{proof}
The proof follows the lines of Section 4 in \cite{abelswilke}, since $X_\gamma \into C^2(\Sigma)$ by choice of $p$ and $q$.
\end{proof}
\section{Maximal $L_p-L_q$ Regularity for Linearized Problem}
\subsection{Reflection Operators}
We denote the upper half space of $\R^n$ by $\R^n_+ := \{ x \in \R^n : x_n > 0 \}$.
We will denote by $R$ the even reflection of a function defined on $\R^n_+$ across the boundary $\partial\R^n_+$ in $x_n$ direction, that is, we define $R$ as an extension operator via $Ru (t,x_1,...,x_n) := u(t,x_1,...,-x_n)$ for all $x_n < 0$. Note that $R$ admits a bounded extension $R: L_q(\R^n_+) \pfeil L_q(\R^n)$. The following theorems state that even more is true.
\begin{theorem} Let $1 < q < \infty$. \label{456745645}
The even reflection in $x_n$ direction $R$ induces a bounded linear operator from $W^{1+\alpha}_q(\R^n_+)$ to $W^{1+\alpha}_q(\R^n)$, whenever $0 \leq \alpha < 1/q$.
\end{theorem}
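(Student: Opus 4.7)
The plan is to characterize $W^{1+\alpha}_q(\R^n)$ as the set of $f \in L_q(\R^n)$ whose distributional first-order partials lie in $W^\alpha_q(\R^n)$, and thereby reduce the theorem to the boundedness of two reflection operators on $W^\alpha_q$. To this end, I first compute the derivatives of $Ru$: the tangential ones satisfy $\partial_j Ru = R(\partial_j u)$ for $j = 1,\ldots,n-1$, whereas the normal one is the \emph{odd} reflection
\begin{equation*}
\partial_n Ru = O(\partial_n u), \qquad Ov(x',x_n) := \operatorname{sign}(x_n)\, v(x',|x_n|).
\end{equation*}
That these identities hold distributionally follows by integrating against a test function and observing that the two boundary contributions from $\{x_n=0^\pm\}$ cancel, since $Ru$ is continuous across $\{x_n=0\}$ (both one-sided traces equal $u|_{x_n=0^+}$ and enter with opposite signs). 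For $\alpha = 0$ this already finishes the argument, since $R$ and $O$ are trivially bounded on $L_q$. For $\alpha \in (0,1/q)$, it thus remains to show that $R$ and $O$ are bounded $W^\alpha_q(\R^n_+) \to W^\alpha_q(\R^n)$.

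Since the $L_q$-estimates are immediate, the task is to control the Gagliardo seminorm, which I split according to the signs of $x_n$ and $y_n$. The two same-sign contributions are bounded directly by $[v]_{W^\alpha_q(\R^n_+)}^q$ via an obvious change of variables. For the mixed-sign region, setting $\tilde y := (y',-y_n)$ with $y_n < 0 < x_n$, the key geometric fact $|x-\tilde y| \leq |x-y|$ handles the even reflection immediately, because $|Rv(x) - Rv(y)| = |v(x) - v(\tilde y)|$ and the integrand is dominated by the corresponding integrand for $v$ over $\R^n_+ \times \R^n_+$. This yields boundedness of $R$ on $W^\alpha_q$ for the full range $\alpha \in [0,1)$ without any smallness assumption.

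The main obstacle is the mixed-sign region for the odd reflection, where one only has
\begin{equation*}
|Ov(x) - Ov(y)| = |v(x) + v(\tilde y)| \leq |v(x) - v(\tilde y)| + 2|v(\tilde y)|.
\end{equation*}
The difference term is handled as above. The pointwise term, after integrating over $y_n \in (-\infty,0)$ and using $|x-y| \geq x_n + |y_n| \geq x_n$, produces a contribution of the form
\begin{equation*}
C \int_{\R^n_+} \frac{|v(z)|^q}{z_n^{\alpha q}}\, dz,
\end{equation*}
which is controlled by $\|v\|_{W^\alpha_q(\R^n_+)}^q$ via the classical Hardy-type inequality valid in $W^\alpha_q(\R^n_+)$ precisely when $\alpha q < 1$. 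This is the decisive step where the restriction $\alpha < 1/q$ enters: without it, the boundary trace of $v$ would not vanish in the required averaged sense and the odd reflection would introduce a jump incompatible with $W^\alpha_q$-regularity. Combining the estimates for all the cases and summing over $j$ finishes the proof.
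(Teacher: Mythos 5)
Your argument is correct, but it takes a genuinely different route from the paper. You estimate the Gagliardo seminorm of $Ru$ and $Ou$ directly, splitting the double integral by the signs of $x_n$ and $y_n$; the only nontrivial contribution — the cross term coming from the odd reflection of $\partial_n u$ — is reduced to the fractional Hardy inequality
\begin{equation*}
\int_{\R^n_+}\frac{|v(z)|^q}{z_n^{\alpha q}}\,dz \lesssim \|v\|_{W^\alpha_q(\R^n_+)}^q,
\end{equation*}
which holds precisely when $\alpha q < 1$. The paper instead writes the odd extension of $Du$ as a difference of two zero extensions, $TDu = e_0 Du - (e_0 Du)(x',-x_n)$, and invokes the real-interpolation identity $W^\alpha_q(\R^n_+) = (L_q(\R^n_+), W^1_{q,0}(\R^n_+))_{\alpha,q}$ for $\alpha < 1/q$ together with the boundedness of $e_0$ at both endpoints. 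Both arguments isolate the same underlying fact — that for $\alpha q < 1$ there is no boundary trace constraint on $W^\alpha_q(\R^n_+)$, so odd/zero extension across $\{x_n=0\}$ is harmless — but encode it differently: the paper through the interpolation characterization of $W^\alpha_{q,0}$, you through Hardy. Your version is more elementary and self-contained (at the cost of computing an explicit integral to produce the $z_n^{-\alpha q}$ weight), while the paper's is shorter once the interpolation identity is quoted and transfers more readily to other scales (Besov, Bessel potential) used elsewhere in the text. Both correctly pinpoint where $\alpha < 1/q$ enters.
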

\begin{proof}
It is straightforward to verify that for a given $u \in W^{1+\alpha}_q(\R^n_+)$,
\begin{equation}
\partial_j Ru (x_1,...,x_n) = \partial_j u(x_1,...,-x_n), \quad j = 1,...,n-1, \; x_n < 0,
\end{equation}
and $\partial_n Ru(x_1,...,x_n) = - \partial_n u(x_1,...,-x_n)$. Hence also
 $R: W^{1}_q(\mathbb{R}^n_+)\rightarrow W^{1}_q(\mathbb{R}^n)$ is a bounded operator. To show the claim for the fractional order space of order $1+\alpha$, it remains to show that the odd reflection of $Du \in W^{\alpha}_q(\mathbb{R}^n_+)$, that is, say $TDu,$ is again $W^{\alpha}_q(\mathbb{R}^n)$ and that the corresponding bounds hold true.

We first note that $TDu(x_1,...,x_n) = e_0 Du(x_1,...,x_n) - e_0 Du(x_1,...,-x_n)$, where $e_0$ denotes the extension by zero to the lower half plane. 
Note that by real interpolation method, \[ W^\alpha_q (\rnplus) = \left( L_q( \rnplus ), W^1_{q,0} (\rnplus) \right)_{\alpha, q}, \quad 
W^\alpha_q (\rn) = \left( L_q( \rn ), W^1_{q} (\rn) \right)_{\alpha, q},\] 
since $0 < \alpha < 1/q,$ cf. \cite{triebel}. Now, both zero extension operators
\begin{equation}
e_0 : L_q(\rnplus) \rightarrow L_q(\rn) , \quad e_0 : W^1_{q,0}(\rnplus) \rightarrow W^1_{q}(\rn),
\end{equation}
are bounded and linear. From Theorem 1.1.6 in \cite{lunardiinterpol} we obtain that $e_0$ is therefore also a bounded and linear operator between the corresponding interpolation spaces, hence the theorem is proven.
\end{proof}
Note that the above proof makes essential use of the fact that the derivative of $u \in W^{1+\alpha}_q(\R^n_+)$ has no trace on $\partial \R^n_+$ since $\alpha < 1/q$. If one has a trace it needs to be zero to reflect appropriately, which is the statement of the next theorem. The proof follows similar lines, we omit it here.
\begin{theorem}
Let $q$ and $R$ be as above. Then $R$ induces a bounded linear operator 
\begin{equation}
W^{1+\beta}_q(\R^n_+) \cap \{ u : \partial_{x_n} u|_{x_n = 0} = 0 \} \pfeil W^{1+\beta}_q(\Rn)
\end{equation}
for all $\beta \in (1/q,1)$.
\end{theorem}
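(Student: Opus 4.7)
The plan is to mimic the proof of Theorem \ref{456745645} while accounting for the fact that, at the $W^{1+\beta}_q$ level with $\beta>1/q$, the normal derivative of $u$ has a well-defined trace on $\partial\rnplus$, which has to vanish in order for the odd reflection to behave well. First I would compute, exactly as in the earlier proof, that for $x_n<0$
\begin{equation}
\partial_j Ru(x_1,\dots,x_n) = \partial_j u(x_1,\dots,-x_n),\quad j=1,\dots,n-1,
\end{equation}
and $\partial_n Ru(x_1,\dots,x_n) = -\partial_n u(x_1,\dots,-x_n)$, so that $\partial_j Ru$ is the even reflection of $\partial_j u\in W^\beta_q(\rnplus)$ for $j<n$, while $\partial_n Ru$ is the odd reflection of $\partial_n u\in W^\beta_q(\rnplus)$. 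Since $R:W^1_q(\rnplus)\to W^1_q(\rn)$ is already known to be bounded, it is enough to control each $\partial_j Ru$ in $W^\beta_q(\rn)$.

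For $j<n$ no boundary condition is required. Both $R:L_q(\rnplus)\to L_q(\rn)$ and $R:W^1_q(\rnplus)\to W^1_q(\rn)$ are bounded (the latter because the even reflection of a $W^1_q$-function is continuous across $\{x_n=0\}$, so no singular contribution appears in the distributional derivative), and by real interpolation, using
\begin{equation}
W^\beta_q(\rnplus) = (L_q(\rnplus),W^1_q(\rnplus))_{\beta,q},\quad W^\beta_q(\rn) = (L_q(\rn),W^1_q(\rn))_{\beta,q},
\end{equation}
one obtains boundedness of $R:W^\beta_q(\rnplus)\to W^\beta_q(\rn)$, and hence the desired estimate for $\partial_j u$.

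For $j=n$ the odd reflection does not preserve $W^1_q$ unless the trace at $x_n=0$ vanishes, which is precisely where the hypothesis $\partial_{x_n}u|_{x_n=0}=0$ enters: it places $v:=\partial_n u$ in the closed subspace $W^\beta_{q,0}(\rnplus)$ of $W^\beta_q(\rnplus)$ consisting of functions with vanishing trace. For $1/q<\beta<1$ this space admits the interpolation identity
\begin{equation}
W^\beta_{q,0}(\rnplus) = (L_q(\rnplus),W^1_{q,0}(\rnplus))_{\beta,q},
\end{equation}
cf.\ \cite{triebel}. Writing the odd reflection of $v$ in the form
\begin{equation}
Tv(x_1,\dots,x_n) = e_0 v(x_1,\dots,x_n) - e_0 v(x_1,\dots,-x_n),
\end{equation}
exactly as in the proof of Theorem \ref{456745645}, the boundedness of $e_0:L_q(\rnplus)\to L_q(\rn)$ and $e_0:W^1_{q,0}(\rnplus)\to W^1_q(\rn)$ combined with Theorem 1.1.6 of \cite{lunardiinterpol} gives that $T$ maps $W^\beta_{q,0}(\rnplus)$ boundedly into $W^\beta_q(\rn)$. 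Applying this to $\partial_n u$ finishes the proof.

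The main obstacle, and the only real difference to Theorem \ref{456745645}, is identifying the correct interpolation couple in the regime $\beta>1/q$: whereas Theorem \ref{456745645} could use the unconstrained couple because $\partial_n u$ had no trace, here one must restrict to the closed subspace $W^\beta_{q,0}(\rnplus)$, and it is exactly the Neumann-type vanishing assumption on $u$ that puts $\partial_n u$ into this space and makes extension by zero permissible.
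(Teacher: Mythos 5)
Your proof is correct and follows exactly the route the paper indicates when it writes that "the proof follows similar lines, we omit it here": you reuse the even/odd splitting of $\nabla(Ru)$ and the zero-extension interpolation argument from Theorem \ref{456745645}, with the single necessary modification that for $\beta>1/q$ the interpolation space $(L_q(\rnplus),W^1_{q,0}(\rnplus))_{\beta,q}$ is the trace-zero subspace $W^\beta_{q,0}(\rnplus)$ rather than all of $W^\beta_q(\rnplus)$, and the hypothesis $\partial_{x_n}u|_{x_n=0}=0$ is precisely what places $\partial_n u$ in that subspace so that $e_0$ applies. This is the intended argument.
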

We also need a reflection argument for the initial data in $X_\gamma$. The result reads as follows.
\begin{theorem} 
The even reflection $R$ induces a bounded linear operator
\begin{equation}
W^{3+\alpha}_q(\R^n_+) \cap \{ u : \partial_{x_n} u|_{x_n = 0} = 0 \} \pfeil W^{3+\alpha}_q(\Rn)
\end{equation}
for all $\alpha \in (0,1/q),$ $q \in (3/2,2)$. In particular, $R$ also induces a bounded linear operator
\begin{equation}
B^{4-1/q-3/p}_{qp}(\R^n_+) \cap \{ u : \partial_{x_n} u|_{x_n = 0} = 0 \} \pfeil B^{4-1/q-3/p}_{qp}(\Rn)
\end{equation}
for all $q \in (3/2,2)$ and $p > 3/(2-3/q)$.
\end{theorem}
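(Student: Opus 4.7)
The strategy is to adapt the proof of Theorem~\ref{456745645} to third order. I would compute the distributional derivatives $\partial^\beta Ru$ for $|\beta|\le 3$ in terms of even or odd reflections of derivatives of $u$, and then reduce the required norm bound to the boundedness of the even reflection $R$ and the odd reflection $T$ on $W^\alpha_q(\R^n_+)$, respectively $B^\alpha_{qp}(\R^n_+)$, for $\alpha<1/q$.

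Concretely, since $Ru$ is even in $x_n$, a direct computation shows that
\[
\partial^\beta Ru = \begin{cases} R(\partial^\beta u), & \beta_n \text{ even}, \\ T(\partial^\beta u), & \beta_n \text{ odd}, \end{cases}
\]
for every multi-index $\beta$ with $|\beta|\le 3$, holds in the distributional sense on $\R^n$, where $T$ denotes the odd reflection. The only non-trivial input needed is the compatibility $\partial_n u|_{x_n=0}=0$, which guarantees that $\partial_n Ru = T(\partial_n u)$ is continuous across $\{x_n=0\}$; then $\partial_n^2 Ru = R(\partial_n^2 u)$ is continuous there automatically (as even reflections always are), so that $\partial_n^3 Ru = T(\partial_n^3 u)$ follows without any singular surface contribution. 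Analogous reasoning, together with the fact that tangential differentiation preserves zero traces, takes care of the mixed derivatives. No further compatibility is required, because $\partial^\beta u$ with $|\beta|=3$ lies in $W^\alpha_q(\R^n_+)$ with $\alpha<1/q$ and hence carries no boundary trace.

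Given this formula, the $W^{3+\alpha}_q(\R^n)$-bound for $Ru$ reduces to the claim that $R,T \colon W^\alpha_q(\R^n_+) \to W^\alpha_q(\R^n)$ are bounded for $\alpha<1/q$. For $R$ this is the half-order case of Theorem~\ref{456745645}, using $W^\alpha_q = (L_q, W^1_{q,0})_{\alpha,q}$; for $T$ I would write $Tv(x',x_n) = e_0 v(x',x_n) - e_0 v(x',-x_n)$ and apply the same interpolation argument as in the proof of Theorem~\ref{456745645}. The Besov statement is proved identically, replacing $W^\alpha_q$ by $B^\alpha_{qp} = (L_q, W^1_{q,0})_{\alpha,p}$ at each step; here $\alpha := 1-1/q-3/p$ lies in $(0,1/q)$ by the hypotheses $q\in(3/2,2)$ and $p>3/(2-3/q)$, so the argument applies. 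The principal obstacle is the distributional bookkeeping, i.e. isolating $\partial_n u|_{x_n=0}=0$ as the sole compatibility condition that makes the above reflection identity valid up to order three; once this is settled, the norm estimates follow from the half-order reflection bounds just described.
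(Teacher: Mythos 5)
Your proposal is correct and takes essentially the same route as the paper's (very terse) proof: the paper simply says to argue as in Theorem~\ref{456745645}, using additionally that $\partial_{x_n}\partial_{x_n} Ru = R\,\partial_{x_n}\partial_{x_n} u$, which is precisely the consequence of the compatibility condition you isolate — the odd reflection $T(\partial_n u)$ is continuous across $\{x_n=0\}$, so differentiating once more produces no surface delta, and all higher derivatives then reflect cleanly as you describe. Your treatment of the mixed derivatives (tangential differentiation preserves zero traces), the reduction to half-order bounds for $R$ and $T$ via interpolation, and the check that $\alpha=1-1/q-3/p\in(0,1/q)$ all fill in details the paper leaves implicit but are consistent with its argument.
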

\begin{proof}
The second statement follows from the first one for $\alpha = 1 - 1/q - 3/p < 1/q$ since $q < 2$. The first claim is shown as in the proof of Theorem \ref{456745645}, using additionally that $\partial_{x_n} \partial_{x_n} Ru = R \partial_{x_n} \partial_{x_n} u$.
\end{proof}
\subsection{The Shifted Model Problem on the Half Space} \label{section34345439}
Let $n=2,3$. In this section we will be concerned with the linearized problem on the whole upper half space $\R^n_+$ with a flat interface $\Sigma := \{ x \in \R^n_+ : x_1 = 0 \}$. More precisely, we will consider
\begin{subequations} \label{fjnjj435743}
\begin{align} \label{9236749376454}
\partial_t h + \omega^3 h + \ljump n_\Sigma \cdot \nabla \mu \rjump &= g_1, && \text{on } \Sigma, \\
\mu|_\Sigma + \Delta_{x'} h &= g_2, && \text{on } \Sigma, \label{uosgdfouszgfdozusdgf} \\
\omega^2 \mu - \Delta \mu &= g_3, && \text{on } \R^n_+ \backslash \Sigma,  \label{8304765347} \\
e_n \cdot \nabla \mu |_{\partial \R^n_+} &= g_4, && \text{on } \partial\R^n_+, \label{8304765347A}\\
e_n \cdot \nabla_{x'} h |_{\partial\Sigma} &= g_5, && \text{on } \partial\Sigma, \\
h|_{t=0} &= h_0, && \text{on } \Sigma. \label{9236749376454Z}
\end{align}
\end{subequations}
Here, $x' = (x_2,...,x_n)$ and $\omega > 0$ is a fixed shift parameter we need to introduce to get maximal regularity results on the unbounded time-space domain $\R_+ \times \R^n_+ $.

Let us discuss the optimal regularity classes for the data. We seek a solution $h$ of this evolution equation in the space
\begin{equation}
W^1_p(\R_+; W^{1-1/q}_q (\Sigma)) \cap L_p(\R_+; W^{4-1/q}_q(\Sigma)),
\end{equation}
where $p$ and $q$ are specified below. In particular, $\mu \in L_p(\R_+; W^2_q(\R^n_+ \backslash \Sigma))$. Let 
\begin{equation}
X_0 := W^{1-1/q}_q (\Sigma), \quad X_1 := W^{4-1/q}_q (\Sigma),
\end{equation}
and the real interpolation space
\begin{equation}
X_\gamma := (X_1,X_0)_{1-1/p,p} = B^{4-1/q-3/p}_{qp}(\Sigma).
\end{equation}
By simple trace theory, we may deduce the necessary conditions
\begin{gather} \label{8926739873}
g_1 \in  L_p(\R_+;X_0), \quad g_2 \in L_p(\R_+; W^{2-1/q}_q(\Sigma)), \\ g_3 \in L_p(\R_+;L_q(\R^n_+)), \quad g_4 \in L_p(\R_+; W^{1-1/q}_q(\partial\R^n_+)), \quad h_0 \in X_\gamma.
\end{gather}
It is now a delicate matter to find the optimal regularity condition for $g_5$, which turns out to be
\begin{equation}
g_5 \in F^{1-2/(3q)}_{pq} (\R_+; L_q (\partial\Sigma)) \cap L_p(\R_+ ; W^{3-2/q}_{q} (\partial\Sigma)),
\label{8926739873Z}
\end{equation}
cf. Theorem \ref{8374650783645} in the Appendix. Note that $g_5$ has a time trace at $t=0$, whenever $1-2/(3q) - 1/p > 0$. Hence there is a compatibility condition inside the system whenever this inequality is satisfied, namely
\begin{equation} \label{8926739873C}
g_5|_{t=0} = e_n \cdot \nabla_{x'} h_0 |_{\partial\Sigma} = \partial_n h_0 |_{\partial\Sigma}, \quad \text{on } \partial\Sigma.
\end{equation}
Note that there is no compatibility condition stemming from \eqref{uosgdfouszgfdozusdgf} and \eqref{8304765347A} on $\partial\Sigma$, whenever $q <2$.
The following theorem now states that these conditions are also sufficient. Note that the assumptions in Theorem \ref{93453845437} imply that $q<2$ and $1-2/(3q) - 1/p > 0$ hold.
\begin{theorem} \label{93453845437}
Let $6 \leq p < \infty$, $q \in (3/2,2) \cap (2p/(p+1),2p)$ and $\omega > 0$. Then \eqref{9236749376454}-\eqref{9236749376454Z} has maximal $L_p-L_q$-regularity on $\R_+$, that is, for every $(g_1,g_2,g_3,g_4,g_5,h_0)$ satisfying the regularity conditions \eqref{8926739873}-\eqref{8926739873Z} and the compatibility condition \eqref{8926739873C}, there is a unique solution $(h,\mu) \in (W^1_p(\R_+;X_0) \cap L_p(\R_+;X_1) ) \times L_p(\R_+;W^2_q(\R^n_+ \backslash \Sigma))$ of the shifted half space problem \eqref{9236749376454}-\eqref{9236749376454Z}.

Furthermore,
\begin{equation}
|h|_{W^1_p(\R_+;X_0) \cap L_p(\R_+;X_1)} + |\mu|_{L_p(\R_+;W^2_q(\R^n_+ \backslash \Sigma))}
\end{equation}
is bounded by
\begin{gather*}
|g_1|_{ L_p(\R_+;X_0)} + |g_2|_{L_p(\R_+; W^{2-1/q}_q(\Sigma))} + |g_3|_{L_p(\R_+;L_q(\R^n_+))} + \\
|g_4|_{L_p(\R_+; W^{1-1/q}_q(\partial\R^n_+))} + |g_5|_{F^{1-2/(3q)}_{pq} (\R_+; L_q (\partial\Sigma)) \cap L_p(\R_+ ; W^{3-2/q}_{q} (\partial\Sigma))} + |h_0|_{X_\gamma}
\end{gather*}
up to a constant $C=C(\omega) > 0$ which may depend on $\omega >0$.
\end{theorem}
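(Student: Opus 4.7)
The plan is to reduce the half-space system with ninety-degree contact at $\partial\mathbb{R}^n_+$ to a two-phase problem on all of $\mathbb{R}^n$ by even reflection in the $x_n$-direction (using the reflection theorems of the previous subsection), and then to solve the resulting full-space problem by partial Fourier-Laplace transform combined with operator-valued multiplier theorems. The shift $\omega$ is built in precisely so that all resolvent symbols lie strictly in a sector in the right half-plane, which makes $\mathcal R$-sectoriality available on the unbounded time interval $\mathbb{R}_+$.

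First I would carry out a sequence of reductions to homogeneous boundary and initial data. Using Theorem \ref{8374650783645} from the Appendix, which identifies $F^{1-2/(3q)}_{pq}(\mathbb{R}_+; L_q(\partial\Sigma)) \cap L_p(\mathbb{R}_+; W^{3-2/q}_q(\partial\Sigma))$ as the correct trace space for the Neumann derivative $\partial_n h|_{\partial\Sigma}$, I would construct an extension $h^\ast \in W^1_p(\mathbb{R}_+; X_0) \cap L_p(\mathbb{R}_+; X_1)$ with $\partial_n h^\ast|_{\partial\Sigma} = g_5$ and $h^\ast|_{t=0} = h_0$; the compatibility condition \eqref{8926739873C} makes this possible, and the assumption $q<2$ prevents any further compatibility from \eqref{uosgdfouszgfdozusdgf} and \eqref{8304765347A} at $\partial\Sigma$. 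In the same spirit a standard elliptic Neumann-trace construction yields $\mu^\ast \in L_p(\mathbb{R}_+; W^2_q(\mathbb{R}^n_+))$ with $e_n\cdot\nabla \mu^\ast|_{x_n=0} = g_4$. After subtracting $h^\ast$ and $\mu^\ast$, the problem reduces to the case $g_4 = g_5 = 0$ and $h_0 = 0$, with modified inhomogeneities $\tilde g_1, \tilde g_2, \tilde g_3$ of the same type.

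Once the data on $\partial\mathbb{R}^n_+$ vanish, the reflection results of the preceding subsection apply: $h$ extends evenly in $x_n$ to a function of the same regularity on the full-space interface $\widetilde\Sigma := \{x_1 = 0\}\subset\mathbb{R}^n$, and on each side of $\Sigma$ the potential $\mu$ extends evenly across $\{x_n=0\}$ to a solution of $\omega^2\mu - \Delta\mu = \tilde g_3$ on $\mathbb{R}^n\setminus\widetilde\Sigma$. The reflected system is a two-phase linearized Mullins-Sekerka problem with a flat hyperplane interface on all of $\mathbb{R}^n$, in which the boundary $\{x_n=0\}$ has disappeared. For this full-space problem I would Laplace-transform in $t$ and Fourier-transform in $x' = (x_2,\dots,x_n)$; for each frequency pair the shifted elliptic equation for $\mu$ has explicit exponentially decaying solutions in $x_1$, producing a Dirichlet-to-Neumann symbol. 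Substituting into the evolution equation for $h$ yields a scalar nonlocal pseudodifferential equation of order three in $x'$, as in Escher-Simonett \cite{eschersimonett}. Maximal $L_p$--$L_q$ regularity then follows from a Kalton-Weis type operator-valued multiplier theorem, using the $\mathcal R$-sectoriality of the principal symbol supplied by the shift $\omega^3$. Restricting back to $\mathbb{R}^n_+$ and re-adding $h^\ast$ and $\mu^\ast$ produces the desired solution, and uniqueness follows by reversing the reductions and invoking uniqueness for the full-space problem.

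The main obstacle is the construction of the extension $h^\ast$ with the exact mixed anisotropic Triebel-Lizorkin regularity of its Neumann trace. This is precisely why the unusual class $F^{1-2/(3q)}_{pq}(\mathbb{R}_+; L_q(\partial\Sigma))\cap L_p(\mathbb{R}_+; W^{3-2/q}_q(\partial\Sigma))$ is forced upon us: it is the sharp trace space for $\partial_n h|_{\partial\Sigma}$ inside the maximal regularity class, and producing a bounded right inverse to this trace that is simultaneously compatible with the time trace $h_0$ and with the subsequent even reflection in $x_n$ is not standard. Establishing this right inverse requires the delicate anisotropic trace theory of the Appendix and is what enforces the parameter restrictions $6\le p<\infty$ and $q\in(3/2,2)\cap(2p/(p+1),2p)$.
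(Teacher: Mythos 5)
Your overall architecture matches the paper's proof: reduce to homogeneous boundary and initial data, reflect evenly in $x_n$ to a full-space two-phase problem with a flat interface, carry out partial Fourier analysis to expose the order-three Dirichlet-to-Neumann symbol $2|\xi|^2\sqrt{\omega^2+|\xi|^2}$, and then conclude maximal $L_p$--$L_q$ regularity by an operator-sum argument. Two points deserve care, however. First, you assert that Theorem~\ref{8374650783645} lets you build a single $h^\ast$ satisfying both $\partial_n h^\ast|_{\partial\Sigma}=g_5$ and $h^\ast|_{t=0}=h_0$; but the extension operator $E$ of that theorem takes values in ${}_{0}W^1_p(\mathbb R_+;X_0)\cap L_p(\mathbb R_+;X_1)$, so it always returns a function with vanishing time trace and cannot hit a prescribed $h_0$ directly. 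The paper instead first constructs $h_S$ from $h_0$ by solving an auxiliary $L_p$--$L_q$ problem on $\mathbb R^{n-1}$ (via \cite{pruessmulsek}), and only then applies the Appendix trace theorem to $\tilde g_5:=g_5-\partial_n h_S|_{\partial\Sigma}$, which has vanishing time trace precisely because of \eqref{8926739873C}. You should make this two-stage reduction explicit; otherwise the cited theorem does not apply. Second, in the final step the paper does not invoke a Kalton--Weis multiplier theorem but rather establishes a bounded $\mathcal H^\infty$-calculus for $P=2(-\Delta)(\omega^2-\Delta)^{1/2}$ on $L_q(\mathbb R^{n-1})$ (via Example 4.5.16 and Corollary 4.5.12 in \cite{pruessbuch}), transfers this to $W^{1-1/q}_q(\mathbb R^{n-1})$ by commuting with Bessel potentials, and then applies the Dore--Venni theorem to the sum $B+P$ with $B=d/dt+\omega^3$. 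Your route via operator-valued multiplier theorems would also work, but the $\mathcal R$-sectoriality is a property of the symbol $P$ itself, not something ``supplied by the shift $\omega^3$'' -- the shift only ensures $B$ is invertible so that the closed sum is invertible on the unbounded time interval.
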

\begin{proof}
We first reduce to a trivial initial value by extending $h_0$ to $\tilde \Sigma = \{0\} \times \R^{n-1}$ using standard extension results of \cite{triebel} and solving an $L_p-L_q$ auxiliary problem on $\R^{n-1}$ using results of Section 4 in \cite{pruessmulsek} to find some $h_S \in W^1_p(\R_+;X_0) \cap L_p(\R_+;X_1)$ such that $h_S|_{t=0} = h_0$, cf. problem \eqref{90283640723650837465083476}. Then define $\tilde g_5 := g_5 - \partial_n h_S|_{\partial\Sigma}$. Clearly,
\begin{equation}
\tilde g_5|_{t=0} = g_5|_{t=0} - \partial_n h_0|_{\partial\Sigma} = 0, \quad \text{on } \partial\Sigma,
\end{equation}
by the compatibility condition \eqref{8926739873C}. This allows us to use Theorem \ref{8374650783645} to find some $\tilde h \in{_0 W}^1_p(\R_+;X_0) \cap L_p(\R_+;X_1)$ such that
\begin{equation}
\partial_n \tilde h|_{\partial\Sigma} = \tilde g_5, \quad \text{on } \partial\Sigma.
\end{equation}
  By simple trace theory we may find $\mu_4 \in L_p(\R_+; W^2_q(\R^n_+ \backslash \Sigma))$ such that $\partial_n \mu_4|_{\partial\R^n_+} = g_4$ on $\partial\R^n_+$. Let $\tilde \Sigma  := R\Sigma := \{ x \in \R^n : x_1 = 0 \}.$ We then solve the elliptic auxiliary problem
\begin{subequations} \label{92fddfg36749873}
\begin{align}
\omega^2 \tilde \mu - \Delta \tilde \mu &= Rg_3 - R\Delta \mu_4, && \text{on } \R^n \backslash \tilde \Sigma, \\
\tilde \mu|_{ \tilde \Sigma} &= R\Delta_{x'} \tilde h + R\Delta_{x'} h_S + Rg_2 - R\mu_4|_{\tilde\Sigma}, && \text{on } \tilde\Sigma, \label{973864587645}
\end{align}
\end{subequations}
by a unique $\tilde \mu \in L_p(\R_+;W^2_q(\R^n \backslash \tilde\Sigma))$, cf. \cite{amann}. Note at this point that we used that due to $q <2$ and Theorem \ref{456745645} we have that the data in \eqref{973864587645} is in $L_p(\R_+;W^{2-1/q}_q(\tilde\Sigma))$. Note that by construction $\tilde \mu$ is even in $x_n$ direction since both the data in \eqref{92fddfg36749873} are.

We have reduced the problem to the case where $(g_2,g_3,g_4,g_5,h_0) = 0$, that is, we are left to solve
\begin{subequations}
\begin{align} \label{923674937dfgdfg6454}
\partial_t h + \omega^3 h + \ljump n_\Sigma \cdot \nabla \mu \rjump &= g_1, && \text{on } \Sigma, \\
\mu|_\Sigma + \Delta_{x'} h &= 0, && \text{on } \Sigma, \\
\omega^2 \mu - \Delta \mu &= 0, && \text{on } \R^n_+ \backslash \Sigma,  \label{8304ergrg765347} \\
e_n \cdot \nabla \mu |_{\partial \R^n_+} &= 0, && \text{on } \partial\R^n_+, \label{8304dfgd765347A}\\
e_n \cdot \nabla_{x'} h |_{\partial\Sigma} &= 0, && \text{on } \partial\Sigma, \label{8304dfgd765347A2} \\
h|_{t=0} &= 0, && \text{on } \Sigma, \label{92367dfg49376454Z}
\end{align}
\end{subequations}
for possibly modified $g_1$ not to be relabeled in an $L_p-L_q$-setting. We reflect the problem once more across the boundary $\partial\R^n_+$ using the even reflection in $x_n$ direction $R$ and by doing so we obtain a full space problem with a flat interface and that the conditions \eqref{8304dfgd765347A} and \eqref{8304dfgd765347A2} are fulfilled automatically. We obtain the problem
\begin{subequations}
\begin{align} \label{grthrth54}
\partial_t h + \omega^3 h + \ljump n_\Sigma \cdot \nabla \mu \rjump &= Rg_1, && \text{on }\tilde \Sigma, \\
\mu|_{\tilde \Sigma} + \Delta_{x'} h &= 0, && \text{on } \tilde\Sigma,  \label{923674937dfgdddfg6454B}\\
\omega^2 \mu - \Delta \mu &= 0, && \text{on } \R^n \backslash \tilde \Sigma,   \label{923674937dfgdddfg6454} \\
h|_{t=0} &= 0, && \text{on } \tilde\Sigma, \label{92367dfg493ff76454Z}
\end{align}
\end{subequations}
where $Rg_1 \in L_p(\R_+;W^{1-1/q}_q(\tilde\Sigma))$. Let us denote by $S(h)$ the unique solution of the elliptic problem \eqref{923674937dfgdddfg6454B}-\eqref{923674937dfgdddfg6454}. Then we can write the system as an abstract evolution equation as follows. Define $\mathcal A h (x) := \ljump n_\Sigma \cdot \nabla S(h) \rjump - \omega^3 h$ and its realization in $W^{1-1/q}_q(\tilde\Sigma)$ by $ A : D(A) \pfeil W^{1-1/q}_q(\tilde\Sigma)$, where the domain of $A$ is given by
\begin{equation}
D(A) := W^{4-1/q}_q(\tilde\Sigma).
\end{equation}
Then we can modify the results of \cite{pruessmulsek} to obtain that the operator $A$ has the property of maximal $L_q$-regularity on the whole half line $\R_+$, whence a general principle of maximal regularity going back to Dore \cite{dore2000} and the works of Bourgain and Benedek, Calderon and Panzone \cite{bourgain1984} now gives that $A$ has also maximal $L_p$-regularity on $\R_+$, since $1 < p < \infty$, cf. \cite{pruessbuch}. We give the full details below. Having this at hand we can solve the initial value problem
\begin{subequations} \label{90283640723650837465083476}
\begin{align}
\frac{d}{dt}h(t) + Ah(t) &=\tilde f(t), \quad t \in \R_+, \\
h(0) &= \tilde h_0,
\end{align}
\end{subequations}
for any $\tilde f \in L_p(\R_+;W^{1-1/q}_q(\tilde\Sigma))$ and $\tilde h_0 \in B^{4-1/q-3/p}_{qp}(\tilde\Sigma)$ by a unique function $h \in W^1_p(\R_+;W^{1-1/q}_q(\tilde\Sigma)) \cap L_p(\R_+; W^{4-1/q}_q(\tilde\Sigma))$. By choosing
\begin{equation}
f := R\ljump n_\Sigma \cdot \nabla (\tilde \mu + \mu_4) \rjump - R\partial_t (\tilde h + h_S) + Rg_1, \quad \tilde h_0 := 0,
\end{equation}
we obtain a unique solution $(h,S(h))$ of the problem \eqref{grthrth54}-\eqref{92367dfg493ff76454Z} in the proper $L_p-L_q$-regularity classes on $\R_+ \times \R^{n-1}$. The estimate easily follows and the proof is complete.

Let us give the details on how we 
obtain maximal $L_q$-regularity for $A$ on $\R_+$. We take Fourier transform with respect to $(x_2,...,x_n) \in \R^{n-1}$ to obtain a system
\begin{subequations}
	\begin{align}
	\omega^3 \hat h + \partial_t \hat h + \ljump \partial_1 \hat \pi \rjump &=\hat f, && \xi \in \R^{n-1}, \\
	\omega^2 \hat \pi + |\xi|^2 \hat \pi - \partial_1^2 \hat \pi &= 0, && (x_1,\xi) \in \dot \R \times \R^{n-1}, \\
	\hat \pi|_{x_1=0} + |\xi|^2 \hat h &=0, && \xi \in \R^{n-1}, \\
	\hat h|_{t=0} &= 0, && \xi \in \R^{n-1},
	\end{align}
\end{subequations}
where $\hat \pi = \hat \pi(t,x_1,\xi)$, $\hat h = \hat h(t,\xi)$ and $\hat f = \hat f(t,\xi)$ denote the Fourier transforms of $\pi$, $h$, and $f$ with respect to the last $n-1$ variables $(x_2,...,x_n) \in \R^{n-1}$. We can now solve the second order differential equation for $\hat \pi$ and together with boundary and decay conditions we finally obtain
\begin{equation}
\ljump \partial_1 \hat \pi \rjump = 2|\xi|^2  \sqrt{\omega^2 + |\xi|^2}  \hat h,
\end{equation}
whence we obtain a modified version of the evolution equation in \cite{pruessmulsek}, namely
\begin{align}
(\partial_t + \omega^3) \hat h + \left( 2 |\xi|^2 \sqrt{\omega^2 + |\xi|^2} \right) \hat h &= \hat f, \quad t \in \R_+, \\   \hat h(t=0) &= 0.
\end{align}
Let now $B_1$ be the negative Laplacian on $L_q(\R^{n-1})$ with domain $W^2_q(\R^{n-1})$. It is now well known that $B_1$ admits an $\mathcal R$-bounded $\mathcal H^\infty$-calculus on $L_q(\R^{n-1})$ with corresponding $\mathcal {RH}^\infty$ angle zero, $\phi_{B_1}^{\mathcal {RH}^\infty} = 0$, cf. the proof of Proposition 8.3.1 in \cite{pruessbuch}. Let furthermore $B_2$ be the operator given by $(\omega^2 - \Delta)^{1/2}$ on $L_q(\R^{n-1})$ with natural domain $W^1_q(\R^{n-1})$. Then by Example 4.5.16(i) in \cite{pruessbuch} we know that $B_2$ is invertible and admits a bounded $\mathcal H^\infty$-calculus on $L_q(\R^{n-1})$ and the $\mathcal H^\infty$-angle is zero, $\phi_{B_2}^\infty = 0$. We now apply Corollary 4.5.12(iii) in \cite{pruessbuch} to get that $P := 2B_1B_2$ 
 is a closed, sectorial operator which itself admits a bounded $\mathcal H^\infty$-calculus on $L_q(\R^{n-1})$ as well and that the $\mathcal H^\infty$-angle of $P$ is zero. The fact that $B_1$ and $B_2$ commute stems from the fact that these are given as Fourier multiplication operators.

We now show that $P$ admits a bounded $\mathcal H^\infty$-calculus also on $W^s_q(\R^{n-1})$ for all $0 < s < 1$, in particular for $s=1-1/q$. To this end we show the claim for $s=1$ and use real interpolation method. We will use the fact that $(I-\Delta)^{ 1/2}$ is a bounded isomorphism from $W^1_q(\R^{n-1}) \pfeil L_q(\R^{n-1})$ with inverse $(I-\Delta)^{ -1/2}$.

Let $ \phi > 0$ and $\Sigma_\phi$ be the sector in the complex plane of opening angle $\phi$. Since $P$ admits a bounded $\mathcal H^\infty$-calculus on $L_q(\R^{n-1})$, there is a constant $K_\phi > 0$, such that
\begin{equation} \label{hinf5656}
| h (P)|_{\mathcal B ( L_q (\mathbb R^{n-1} ))} \leq K_\phi |h|_{H^\infty(\Sigma_\phi)}
\end{equation}
for all $h \in H_0(\Sigma_\phi).$ Let $u \in C^\infty_0(\mathbb R^{n-1}).$ Taking Fourier transform just as in the proof of Theorem 6.1.8 in \cite{pruessbuch} gives
\begin{equation}
\mathcal F[ h(P) u] (\xi) = h(\mathcal P(\xi)) \mathcal F u (\xi),
\end{equation}
where $\mathcal P(\xi) = 2|\xi|^2\sqrt{\omega^2 + |\xi|^2}$ is the corresponding symbol of $P$. Whence clearly we have the representation formula
\begin{equation}
h(P) u = \mathcal F^{-1} [h(\mathcal P(\xi)) \mathcal F u]
\end{equation}
for all $u \in C^\infty_0(\mathbb R^{n-1})$, in other words, the symbol of $h(P)$ is in fact $h(\mathcal P(\xi)).$ Since $h(P)$ and the shift operators $(I-\Delta)^{\pm 1/2}$ commute we easily see that $P$ admits a bounded $\mathcal H^\infty$-calculus on $W^1_q(\R^{n-1})$ and hence on all $W^s_q(\R^{n-1})$, where $0 < s <1 $. The constant extension to $L_p(\R_+;W^s_q(\R^{n-1}))$ which we will also denote by $P$ then admits a bounded $\mathcal H^\infty$-calculus on $L_p(\R_+;W^s_q(\R^{n-1}))$ for all $0< s<1$ with angle zero.

We now apply a version of Dore-Venni theorem, cf. \cite{REFPAPERPRUESS}. To this end let $B$ be the operator on $L_p(\R_+;W^{1-1/q}_q(\R^{n-1}))$ defined by $B = d/dt + \omega^3$ with domain
\begin{equation}
D(B) = {_0} W^1_p(\R_+;W^{1-1/q}_q(\R^{n-1})).
\end{equation}
Then $B$ is sectorial and admits a bounded $\mathcal H^\infty$-calculus on $L_p(\R_+;W^s_q(\R^{n-1}))$ of angle $\pi/2$. Furthermore, $B: D(B) \pfeil L_p(\R_+;W^s_q(\R^{n-1}))$ is invertible.
Let as above $P$ be the operator on $L_p(\R_+;W^{1-1/q}_q(\R^{n-1}))$ with domain $D(P) = L_p(\R_+;W^{4-1/q}_q(\R^{n-1}))$ given by its symbol $2|\xi|^2(\omega^2+|\xi|^2)^{1/2}$.
 Now, by the Dore-Venni theorem we get that the sum $B+P$ with domain $D(B+P)=D(B) \cap D(P)$ is closed, sectorial and invertible. In other words, the evolution equation $Bu+Pu =f$ posesses for every $f  \in L_p(\R_+;W^{1-1/q}_q(\R^{n-1}))$ a unique solution $u \in D(B) \cap D(P)$, hence the proof of maximal regularity is complete.
\end{proof}
\textbf{Dependence of the maximal regularity constant on the shift parameter.}
Note that at this point it is a priori not clear how the maximal regularity constant depends on the shift parameter $\omega > 0$. However, we will need a good understanding of this dependence later on when we want to solve the bent halfspace problems. 

We will now introduce suitable $\omega$-dependent norms in both data and solution space and show that the maximal regularity constant is then independent of $\omega$.

To this end we will proceed with a scaling argument. Fix $\omega> 0$ and let $(h,\mu)$ be the solution on $\R_+$ of the $\omega$-shifted half space problem \eqref{9236749376454}-\eqref{9236749376454Z}. Define new functions
\begin{equation}
  \tilde h(x,t) := \omega^2 h(x/\omega,t/\omega^3),\quad \tilde \mu (x,t) := \mu(x/\omega,t/\omega^3), \quad x \in \R_+, t \in \R_+.
\end{equation}
It is then easy to check that $(\tilde h,\tilde \mu)$ solves 
\begin{subequations}
	\begin{align}
	\partial_t \tilde h +  \tilde h + \ljump n_\Sigma \cdot \nabla \tilde\mu \rjump &= \tilde g_1, && \text{on } \Sigma, \\
	\tilde \mu|_\Sigma + \Delta_{x'} \tilde h &= \tilde g_2, && \text{on } \Sigma, \\
 \tilde \mu - \Delta \tilde \mu &= \tilde g_3, && \text{on } \R^n_+ \backslash \Sigma,   \\
	e_n \cdot \nabla \tilde \mu |_{\partial \R^n_+} &= \tilde g_4, && \text{on } \partial\R^n_+, \\
	e_n \cdot \nabla_{x'} \tilde h |_{\partial\Sigma} &= \tilde g_5, && \text{on } \partial\Sigma, \\
	\tilde h|_{t=0} &= \tilde h_0, && \text{on } \Sigma,
	\end{align}
\end{subequations}
where
\begin{gather}
\tilde g_1 (x,t) := \omega^{-1} g_1(x/\omega, t/\omega^3), \quad
\tilde g_2 (x,t) := g_2(x/\omega, t/\omega^3), \quad \\
\tilde g_3 (x,t) := \omega^{-2} g_3(x/\omega, t/\omega^3), 
\tilde g_4 (x,t) := \omega^{-1} g_4(x/\omega, t/\omega^3), \\
\tilde g_5 (x,t) := \omega g_5(x/\omega, t/\omega^3),
\tilde h_0 (x) := \omega^2 h_0 (x/\omega), \quad x \in \R_+, t \in \R_+.
\end{gather}
Since the operator on the left hand side is independent of $\omega$, we get by the previous theorem that there is some constant $M > 0$ independent of $\omega$, such that
\begin{equation} \label{9346858347654}
|\tilde h|_{W^1_p(\R_+;X_0) \cap L_p(\R_+;X_1)} + |\tilde \mu|_{L_p(\R_+;W^2_q(\R^n_+ \backslash \Sigma))} \end{equation} 
is bounded by
\begin{align}
& M \big(
|\tilde g_1|_{ L_p(\R_+;X_0)} + |\tilde g_2|_{L_p(\R_+; W^{2-1/q}_q(\Sigma))} + |\tilde g_3|_{L_p(\R_+;L_q(\R^n_+))} + \\
&+|\tilde g_4|_{L_p(\R_+; W^{1-1/q}_q(\partial\R^n_+))} + |\tilde g_5|_{F^{1-2/(3q)}_{pq} (\R_+; L_q (\partial\Sigma)) \cap L_p(\R_+ ; W^{3-2/q}_{q} (\partial\Sigma))} + |\tilde h_0|_{X_\gamma} \nonumber
\big).
\end{align}
Clearly, the $\omega$-dependence is now hidden in the norms, whenceforth a careful calculation entails
\begin{align*}
&\omega^{ 4-1/q    } | h |_{L_p(\R_+;L^q(\Sigma))} + \omega^{ 3-1/q    } | D h |_{L_p(\R_+;L_q(\Sigma))} + 
\omega^{ 2-1/q    } |D^2 h |_{L_p(\R_+;L_q(\Sigma))} + \\ &+ \omega^{ 1-1/q    } | D^3 h |_{L_p(\R_+;L_q(\Sigma))} + 
| [\partial_t h ]_{X_0} |_{L_p( \R_+)} + | [D^3 h ]_{X_0} |_{L_p( \R_+)}  + \\ &+ \omega^2 | \mu |_{L^p( \R_+;L_q(\rnplus \backslash \Sigma))} + \omega |D  \mu |_{L_p( \R_+;L_q(\rnplus \backslash \Sigma))} +  | D^2 \mu |_{L_p( \R_+;L_q(\rnplus \backslash \Sigma))} \leq \\
&\leq M \bigg( \omega^{1-1/q} | g_1 |_{L_p( \R_+;L_q(\Sigma))} +
| [g_1]_{X_0} |_{L_p( \R_+)} +
\omega^{2-1/q} | g_2 |_{L_p( \R_+;L^q(\Sigma))} + \\
&+  \omega^{1-1/q} | D g_2 |_{L_p( \R_+;L_q(\Sigma))} +   | [Dg_2]_{X_0} |_{L_p( \R_+)} + | g_3 |_{L_p( \R_+;L_q(\rnplus \backslash \Sigma))} + \\
&+ \omega^{1-1/q} | g_4 |_{L_p( \R_+;L_q (\partial \rnplus))} + | [g_4]_{W^{1-1/q}_q(\partial\rnplus)} |_{L_p( \R_+)} + \\
&+ \omega^{3-2/q} | g_5 |_{L_p( \R_+,L_q(\partial\Sigma))} 
+ \omega^{2-2/q} | D g_5 |_{L_p( \R_+,L_q(\partial\Sigma))}  \\
&+ | [Dg_5]_{W^{2-2/q}_q (\partial\Sigma)} |_{L_p( \R_+)} +
 [g_5]_{F^{1- \frac{2}{3q}}_{pq} ( \R_+;L_q(\partial\Sigma))}
+ K(\omega) | h_0 |_{X_{\gamma}}
\bigg),
\end{align*}
for some $K(\omega) >0$ stemming from interpolating the estimates for $X_0$ and $X_1$, the value of which does not matter. The calculations involving the Triebel-Lizorkin seminorm of $\tilde g_5$ stem from the characterization via differences, see Proposition 2.3 in \cite{meyriesveraartraces}.

We now define norms as follows. Let
\begin{align*}
|h|_{E,1,\omega} &:= \omega^{ 4-1/q    } | h |_{L_p(\R_+;L^q(\Sigma))} + \omega^{ 3-1/q    } | D h |_{L_p(\R_+;L_q(\Sigma))} + 
\omega^{ 2-1/q    } |D^2 h |_{L_p(\R_+;L_q(\Sigma))} + \\ &+ \omega^{ 1-1/q    } | D^3 h |_{L_p(\R_+;L_q(\Sigma))} + 
| [\partial_t h ]_{X_0} |_{L_p( \R_+)} + | [D^3 h ]_{X_0} |_{L_p( \R_+)} , \\
|\mu |_{E,2,\omega} &:= \omega^2 | \mu |_{L^p( \R_+;L_q(\rnplus \backslash \Sigma))} + \omega |D  \mu |_{L_p( \R_+;L_q(\rnplus \backslash \Sigma))} +  | D^2 \mu |_{L_p( \R_+;L_q(\rnplus \backslash \Sigma))}, \\
| g_1 |_{F,1,\omega} &:=\omega^{1-1/q} | g_1 |_{L_p( \R_+;L_q(\Sigma))} +
| [g_1]_{X_0} |_{L_p( \R_+)}, \\
| g_2 |_{F,2,\omega} &:= \omega^{2-1/q} | g_2 |_{L_p( \R_+;L^q(\Sigma))} + \omega^{1-1/q} | D g_2 |_{L_p( \R_+;L_q(\Sigma))} +   | [Dg_2]_{X_0} |_{L_p( \R_+)} , \\
| g_3 |_{F,3,\omega} &:= | g_3 |_{L_p( \R_+;L_q(\rnplus \backslash \Sigma))} , \\
| g_4|_{F,4,\omega} &:= \omega^{1-1/q} | g_4 |_{L_p( \R_+;L_q (\partial \rnplus))} + | [g_4]_{W^{1-1/q}_q(\partial\rnplus)} |_{L_p( \R_+)}, \\
| g_5|_{F,5,\omega} &:= \omega^{3-2/q} | g_5 |_{L_p( \R_+,L_q(\partial\Sigma))} 
+ \omega^{2-2/q} | D g_5 |_{L_p( \R_+,L_q(\partial\Sigma))}  \\
&+ | [Dg_5]_{W^{2-2/q}_q (\partial\Sigma)} |_{L_p( \R_+)} +
[g_5]_{F^{1- \frac{2}{3q}}_{pq} ( \R_+;L_q(\partial\Sigma))}, \end{align*} and 
$|h_0|_{F,6,\omega} := K(\omega) | h_0 |_{X_{\gamma}}$.
This way we obtain that $|h|_{E,1,\omega} + |\mu |_{E,2,\omega}$ is bounded by
\begin{equation}
 M ( |g_1|_{F,1,\omega} +|g_2|_{F,2,\omega}+|g_3|_{F,3,\omega}+|g_4|_{F,4,\omega}+ |g_5|_{F,5,\omega} + |h_0|_{F,6,\omega} ),
\end{equation}
where we point out that $M>0$ is independent of $\omega > 0$. Note that this estimate also holds true on bounded intervals $J = (0,T) \subset \R_+$, as can be seen as follows. First again reduce to trivial initial data as in the proof of Theorem \ref{93453845437}. Then one can simply extend the data $(g_1,g_2,g_3,g_4)$ to the half line $\R_+$ by zero. Regarding $g_5$ we note that after the reduction procedure, $g_5|_{t=0}=0$, whence we may use Section 3.4.3 in \cite{triebel} and Corollary 5.12 in \cite{kaipdiss} to extend $g_5$ to a function on the half line $\R_+$. Then on $J$ the same estimate holds true if we replace $M$ by $2M$.
\subsection{Bent Half Space Problems}
In this section we consider the shifted model problem \eqref{fjnjj435743} on a bent half space $\R^n_\gamma := \{ x \in \R^n : x_n > \gamma(x_1,...,x_{n-1}) \}$, where $\gamma : \R^{n-1} \pfeil \R$ is a sufficiently smooth function with sufficiently small $C^1(\R^{n-1})$-norm. Since also the reference surface may be curved, we consider a slightly bent interface $\Sigma_{\beta} := \{ x \in \overline{\R^n_\gamma} : x_1 = {\beta(x_2,...,x_n)} \}$. Again, $\beta : \R^{n-1} \pfeil \R$ is suitably smooth and the $C^1(\R^{n-1})$-norm is sufficiently small. The bent half space problem reads as
\begin{subequations} \label{fjnjj43t5743}
	\begin{align} \label{92367493764z54}
	\partial_t h + \omega^3 h + \ljump n_{\Sigma_{\beta}} \cdot \nabla \mu \rjump &= g_1, && \text{on } \Sigma_\beta, \\
	\mu|_{\Sigma_\beta} + \Delta_{\Sigma_\beta} h &= g_2, && \text{on } \Sigma_\beta, \\
	\omega^2 \mu - \Delta_x \mu &= g_3, & &\text{on } \R^n_\gamma \backslash \Sigma_\beta,  \label{8304765g347} \\
	n_\gamma \cdot \nabla \mu |_{\partial \R^n_\gamma} &= g_4, && \text{on } \partial\R^n_\gamma, \label{83047c65347A}\\
	n_\gamma \cdot \nabla_{\Sigma_\beta} h |_{\partial\Sigma_\beta} &= g_5, && \text{on } \partial\Sigma_\beta, \\
	h|_{t=0} &= h_0, && \text{on } \Sigma_\beta, \label{923674937d6454Z}
	\end{align}
\end{subequations}
where $n_\gamma$ denotes the normal at $\partial\R^n_\gamma$. The smallness assumption on $|\beta|_{C^1} + |\gamma|_{C^1}$ implies that the bent domain and interface are only a small perturbation of the half space and the flat interface. We will now solve this problem on the bent half space by transforming it back to the regular half space. 
\begin{lemma}
	Let $k \in \mathbb N$ and $\beta,\gamma \in C^k(\R^{n-1})$. Then there is some $F \in C^k(\R^n;\R^n ),$ such that $F: \R^n \pfeil \R^n$ is a $C^k$-diffeomorphism and such that additionally, $F|_{\R^n _\gamma } : \R^n _\gamma \pfeil \R^n _+$ is a $C^k$-diffeomorphism as well. Furthermore, $F$ maps $\Sigma_\beta$ to the flat interface $\R^n _+ \cap \{ x_1 = 0\}$. We also have that $| I - DF|_{C^l(\R^n)} \lesssim | \beta|_{C^{l+1}(\R^{n-1})} + | \gamma|_{C^{l+1}(\R^{n-1})}$ for all $l=0,...,k-1$.
\end{lemma}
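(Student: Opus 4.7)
The plan is to construct $F$ as a two-step composition $F = F_2 \circ F_1$ of near-identity shear-type $C^k$-diffeomorphisms of $\rn$: $F_1$ flattens the curved boundary $\partial\rngamma$, and $F_2$ then flattens the transformed interface $F_1(\Sigma_\beta)$ while preserving the already-flat boundary. The smallness of $|\beta|_{C^1(\R^{n-1})} + |\gamma|_{C^1(\R^{n-1})}$ built into the bent-halfspace setup will be used in a global contraction argument that defines the second shear.

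Concretely, set $F_1(x) := (x_1,\ldots,x_{n-1}, x_n - \gamma(x_1,\ldots,x_{n-1}))$. This is a $C^k$-diffeomorphism of $\rn$ with explicit inverse, which carries $\rngamma$ onto $\rnplus$ and $\partial\rngamma$ onto $\{y_n = 0\}$. A direct computation gives
\[
F_1(\Sigma_\beta) = \{\, y \in \rn : y_1 = \beta(y_2,\ldots,y_{n-1}, y_n + \gamma(y_1,\ldots,y_{n-1})) \,\}.
\]
For each fixed $y' := (y_2,\ldots,y_n)$, the right-hand side defines a map $y_1 \mapsto T_{y'}(y_1)$ with Lipschitz constant at most $|\beta|_{C^1}|\gamma|_{C^1} < 1$, hence a strict contraction on $\R$. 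Banach's fixed-point theorem yields a unique root $\tilde\beta(y') \in \R$, and smoothness in $y'$ follows from the implicit function theorem since $1 - \partial_n\beta\,\partial_1\gamma$ is uniformly bounded away from zero. Setting
\[
F_2(y) := (y_1 - \tilde\beta(y_2,\ldots,y_n), y_2,\ldots,y_n),
\]
one obtains a $C^k$-diffeomorphism of $\rn$ that leaves $y_n$ untouched and hence preserves both $\rnplus$ and $\{y_n = 0\}$, while $F_2(F_1(\Sigma_\beta)) = \{z_1 = 0\}$. The composition $F := F_2 \circ F_1$ then enjoys all the geometric properties claimed.

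For the quantitative estimate write
\[
DF - I = (DF_2 \circ F_1 - I)\, DF_1 + (DF_1 - I).
\]
The matrix $DF_1 - I$ has entries $-\partial_j\gamma$ in the last row and vanishes elsewhere, yielding $|DF_1 - I|_{C^l} \lesssim |\gamma|_{C^{l+1}}$; analogously $|DF_2 - I|_{C^l} \lesssim |\tilde\beta|_{C^{l+1}}$. Combined with the algebra property $|fg|_{C^l} \lesssim |f|_{C^l}|g|_{C^l}$ and the chain-rule estimate for $DF_2 \circ F_1$, everything reduces to showing
\[
|\tilde\beta|_{C^{l+1}(\R^{n-1})} \lesssim |\beta|_{C^{l+1}(\R^{n-1})} + |\gamma|_{C^{l+1}(\R^{n-1})}, \qquad l = 0,\ldots,k-1,
\]
which is the main obstacle. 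This is handled by induction on $l$: differentiating the fixed-point identity for $\tilde\beta$ and applying Faà di Bruno produces, for each multi-index $\alpha$ with $|\alpha| = l+1$, a relation
\[
(1 - \partial_n\beta\,\partial_1\gamma)\, \partial^\alpha\tilde\beta = R_\alpha\bigl(\beta,\gamma,\tilde\beta,\{\partial^\delta\tilde\beta\}_{|\delta| < |\alpha|}\bigr),
\]
where $R_\alpha$ is polynomial in its arguments and involves derivatives of $\beta,\gamma$ of order at most $l+1$. Uniform invertibility of the prefactor (again from smallness of the $C^1$-norms) together with the inductive hypothesis closes the estimate.
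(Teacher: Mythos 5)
Your proof is correct and follows the same overall strategy as the paper — a composition $F = F_2 \circ F_1$ of two near-identity shears, the first flattening $\partial\rngamma$ in the $x_n$-direction, the second flattening the transformed interface in the $x_1$-direction — but you handle the one genuine technical step differently. The paper observes that $\Phi_1(\Sigma_\beta)$ is parametrized by $(x_2,\dots,x_n) \mapsto (\beta, H(x_2,\dots,x_n))$ for a near-identity map $H$ on the base, proves $H$ is a global $C^k$-diffeomorphism of $\R^{n-1}$ via $|\det DH| \geq 1/2$, and then uses $\beta \circ H^{-1}$ as the second shear profile. You instead read off the implicit equation $y_1 = \beta(y_2,\dots,y_{n-1}, y_n + \gamma(y_1,\dots,y_{n-1}))$ directly, solve it by contraction to produce $\tilde\beta$ (which is of course the same function as $\beta\circ H^{-1}$), and obtain $C^k$ regularity from the implicit function theorem since $1 - \partial_n\beta\,\partial_1\gamma$ is uniformly bounded away from zero. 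The two constructions are equivalent but the emphases differ: the paper's route isolates a clean sub-lemma (global invertibility of a near-identity map), while your route makes the quantitative estimate $|I-DF|_{C^l} \lesssim |\beta|_{C^{l+1}} + |\gamma|_{C^{l+1}}$ — which the paper dispatches with ``easily check'' — visibly provable by differentiating the fixed-point identity and inducting, since the prefactor $1 - \partial_n\beta\,\partial_1\gamma$ appears explicitly. One small imprecision: the Lipschitz constant of $T_{y'}$ is controlled by $|\nabla\beta|_{L_\infty}|\nabla\gamma|_{L_\infty}$, not by the full $C^1$-norms (which include the sup-norms of $\beta,\gamma$ themselves); this is harmless here because both $C^1$-norms are assumed small, but worth stating the right quantity.
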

\begin{proof} To economize notation, let $n = 3$.
	We first transform in $x_3$-direction via $\Phi_1 : \R^3 \pfeil \R^3, x \mapsto (x_1,x_2,x_3-\gamma(x_1,x_2))$. It is then easy to see that the surface $\Phi_1(\Sigma_\beta)$ 
	is given by the set
	\begin{equation}
	\Phi_1 (\Sigma_\beta) = \{ ( \beta(x_2,x_3) , x_2 ,x_3 - \gamma ( \beta (x_2,x_3) , x_2) ) : x_2 \in \R \} \cap \R^3_+.
	\end{equation}
	Note that this is equivalent to
	\begin{equation}
	\Phi_1 (\Sigma_\beta) =\{ ( \beta (x_2,x_3) , H(x_2,x_3) ) : (x_2,x_3) \in \R^2 \} \cap \R^3_+,
	\end{equation}
	where
	\begin{equation}
	H : \R^2 \pfeil \R^2, (x_2,x_3) \mapsto (x_2, x_3 - \gamma( \beta(x_2,x_3) , x_2 ) ).
	\end{equation}
	Now note that whenever $| ( \beta, \gamma) |_{C^1}$ is sufficiently small, $|H-\text{id}_{\mathbb R^2}|_{C^1}$ is small. Then $|\det DH |  \geq 1/2$ on $\R^2$ and $H : \R^2 \pfeil \R^2$ is globally invertible. Hence the surface $\Phi_1 (\Sigma_\beta)$ can be parametrized by $\beta \circ H^{-1}$,
	\begin{equation}
	\Phi_1 (\Sigma_\beta) = \{ ( \beta ( H^{-1} (x_2,x_3) ) , x_2 ,x_3 ) : (x_2,x_3) \in \R^2 \} \cap \R^3_+.
	\end{equation} Note that by the inverse function theorem, $H^{-1}$ is $C^1(\R^2,\R^2)$.
	Then we transform via $\Phi_2 : \R^3 \pfeil \R^3, x \mapsto (x_1 - \beta \circ H^{-1} (x_2,x_3) , x_2 , x_3)$. We easily check that $F := \Phi_2 \Phi_1$ satisfies the desired properties.
	\end{proof}
Define now functions $G_1,G_2,G_3,G_4,G_5$ and $G_6$ via
\begin{equation*}
g_j(x,t) = G_j(t,F(x)), \; j=1,...,5, \quad h_0(x) =G_6(F(x)), \quad  x \in \R^n_\gamma, t \in \R_+, 
\end{equation*}
cf. \cite{johnsendiffeo}.
We also introduce $(\bar h, \bar \mu) := (h,\mu) \circ F$. Then the problem \eqref{fjnjj43t5743} for $(h,\mu)$ is equivalent to the upper half space problem for $(\bar h,\bar \mu)$ reading as
\begin{subequations} \label{fjnjdj43t5743}
	\begin{align} \label{923674937we64z54}
	\partial_t \bar h + \omega^3 \bar h + \ljump n_{\Sigma} \cdot \nabla \bar \mu \rjump &= \mathcal B_1 (\bar \mu) +G_1, && \text{on } \Sigma, \\
	\bar \mu|_{\Sigma} + \Delta_{\Sigma} \bar h &= \mathcal B_2(\bar h) +G_2, && \text{on } \Sigma, \\
	\omega^2 \bar \mu - \Delta_x \bar \mu &= \mathcal B_3(\bar{\mu})+G_3, && \text{on } \R^n_+ \backslash \Sigma,  \label{830xcv4765g347} \\
	e_n \cdot \nabla \bar \mu |_{\partial \R^n} &= \mathcal B_4(\bar{\mu})+ G_4, && \text{on } \partial\R^n_+, \label{83047csdf65347A}\\
	e_n\cdot \nabla_{\Sigma} \bar h |_{\partial\Sigma} &= \mathcal B_5(\bar h) + G_5, && \text{on } \partial\Sigma, \\
	\bar h|_{t=0} &= G_6, && \text{on } \Sigma, \label{9236sd74937d6454Z}
	\end{align}
\end{subequations}
where the perturbation operators are given by
\begin{align}
\mathcal B_1(\bar \mu) &= \ljump n_{\Sigma_\beta} \cdot \nabla (\bar \mu \circ F) \rjump - \ljump ( n_\Sigma \circ F) \cdot \nabla \bar \mu \rjump, \\
\mathcal B_2 (\bar h) &= \Delta_{\Sigma_\beta} (\bar h \circ F) - \Delta_\Sigma \bar h \circ F, \\
\mathcal B_3 (\bar \mu) &= \Delta_x (\bar \mu \circ F) - \Delta \bar\mu \circ F, \\
\mathcal B_4(\bar \mu)&= e_n \cdot (\nabla \bar\mu \circ F) - n_\gamma \cdot \nabla( \bar \mu \circ F), \\
\mathcal B_5(\bar h) &= e_n \cdot \nabla_\Sigma \bar h - n_\gamma \cdot \nabla_{\Sigma_\beta}(\bar h \circ F).
\end{align}
Define $\mathcal B := (\mathcal B_1,\mathcal B_2,\mathcal B_3,\mathcal B_4,\mathcal B_5,0)$. By careful estimates we can now show that the operator norm of $\mathcal B$ is as small as we like in terms of the $\omega$-dependent norms by choosing $\omega >0$ large enough and the time interval and $|\beta|_{C^1} + |\gamma|_{C^1}$ small enough. 

Let 
\begin{equation} \label{9384765364}
\mathbb E(T) := \left( W^1_p(0,T;X_0) \cap L_p(0,T;X_1) \right) \times L_p(0,T;W^2_q(\R^n_+ \backslash \Sigma)
\end{equation}
and
\begin{align}
\label{9384765364B}\mathbb F(T) &:= L_p(0,T;X_0) \times L_p(0,T; W^{2-1/q}_q(\Sigma)) \times L_p(0,T;L_q(\R^n_+)) \times \\ &\times L_p(0,T; W^{1-1/q}_q(\partial\R^n_+)) \times \\
&\times \left(  F^{1-2/(3q)}_{pq} (0,T; L_q (\partial\Sigma)) \cap L_p(0,T; W^{3-2/q}_{q} (\partial\Sigma)) \right) \times X_\gamma.
\end{align}
We equip $\mathbb E(T)$ and $\mathbb F(T)$ with the $\omega$-weighted norms of Section \ref{section34345439}. Then we can show that there is some small $\alpha = \alpha(p)$, such that
\begin{equation} \label{87364537465gfsd}
| \mathcal B|_{\mathcal B(\mathbb E(T) ; \mathbb F(T))} \leq C(\beta,\gamma)(\omega^{-1/q} + \omega^{-1}) + \varepsilon C(\omega,\beta,\gamma,F) + T^\alpha C(\omega,\beta,\gamma)
\end{equation}
for some constants $C(\beta,\gamma), C(\omega,\beta,\gamma, F), C(\omega,\beta,\gamma) > 0$, whenever $|\beta|_{C^1} + |\gamma|_{C^1} \leq \varepsilon$. Note that by first choosing $\omega > 0$ sufficiently large and then $\varepsilon >0$ and $T>0$ sufficiently small, the right hand side gets as small as we like. 

Let now $L_\omega$ be the linear operator defined by the left hand side of \eqref{fjnjdj43t5743}. Then a Neumann series argument shows that $L_\omega + \mathcal B = L_\omega (I + L_\omega^{-1}\mathcal B)$ is invertible between the spaces equipped with the ($\omega$-weighted) norms.
 This way, the following result is obtained.
\begin{theorem} \label{9dfg38645}
Let $\beta, \gamma$ be smooth curves. Then there exists some possibly large $\omega_0>0$, some small $T>0$ and some small $\varepsilon > 0$, such that if $\omega \geq \omega_0$, $|\beta|_{C^1} + |\gamma|_{C^1} \leq \varepsilon$, the system \eqref{fjnjj43t5743} has maximal $L_p-L_q$-regularity. To be more precise, this means that if we replace $\Sigma$ by $\Sigma_\beta$ and $\R^n_+$ by $\R^n_\gamma$ in \eqref{9384765364} and \eqref{9384765364B}, there is for every $(g_1,g_2,g_3,g_4,g_5,h_0) \in \mathbb F(T)$ a unique solution $(h,\mu) \in \mathbb E(T)$ of \eqref{fjnjj43t5743}. Furthermore, $|h|_{E,1,\omega} + |\mu |_{E,2,\omega}$ is bounded by
\begin{equation}
 2M ( |g_1|_{F,1,\omega} +|g_2|_{F,2,\omega}+|g_3|_{F,3,\omega}+|g_4|_{F,4,\omega}+ |g_5|_{F,5,\omega} + |h_0|_{F,6,\omega} ),
\end{equation}
where $M >0$ is as in \eqref{9346858347654} and in particular independent of $\omega$.
\end{theorem}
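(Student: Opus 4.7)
The plan is to implement the strategy already sketched in the text before the theorem, namely to use the diffeomorphism $F$ from the preceding lemma to pull the bent problem \eqref{fjnjj43t5743} back to the flat half-space problem \eqref{fjnjdj43t5743}, and to dispose of the perturbation operators $\mathcal{B}_1,\ldots,\mathcal{B}_5$ via a Neumann series argument. The two essential inputs are the maximal regularity result on the flat half space (Theorem \ref{93453845437}) together with the $\omega$-independent constant $M$ in the $\omega$-weighted norms constructed in Section \ref{section34345439}, and the estimate \eqref{87364537465gfsd} on the operator norm of $\mathcal{B} = (\mathcal{B}_1,\ldots,\mathcal{B}_5,0)$ between $\mathbb{E}(T)$ and $\mathbb{F}(T)$.

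First, I would spell out the transformation: the given $F \in C^k(\R^n;\R^n)$ maps $\R^n_\gamma$ diffeomorphically onto $\R^n_+$ and $\Sigma_\beta$ onto $\Sigma$, and by the lemma $|I - DF|_{C^l} \lesssim |\beta|_{C^{l+1}} + |\gamma|_{C^{l+1}}$. Setting $\bar h := h\circ F^{-1}$, $\bar\mu := \mu\circ F^{-1}$, a direct computation using the chain rule shows that \eqref{fjnjj43t5743} becomes the flat half-space problem \eqref{fjnjdj43t5743} with the perturbation operators as listed. Denoting by $L_\omega$ the flat linear operator on the left-hand side of \eqref{fjnjdj43t5743}, Theorem \ref{93453845437} together with the scaling computation of Section \ref{section34345439} yields $L_\omega^{-1}\in\mathcal{B}(\mathbb{F}(T);\mathbb{E}(T))$ with norm bounded by the $\omega$-independent constant $M$, where $\mathbb E(T), \mathbb F(T)$ carry the $\omega$-weighted norms.

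The core of the argument is then the estimate \eqref{87364537465gfsd}. I would decompose each $\mathcal{B}_i$ as a sum of terms of the form $(\text{small coefficient})\cdot(\text{at most the same order derivative as in the principal part})$. For $\mathcal{B}_3$ one writes $\Delta_x(\bar\mu\circ F)-\Delta\bar\mu\circ F$ as a sum of second derivatives of $\bar\mu$ multiplied by $(DF\cdot DF^\top - I)$ plus first derivatives multiplied by $D^2F$; both coefficients have small $C^0$-norm proportional to $|\beta|_{C^2}+|\gamma|_{C^2}$ on a bounded set and thus can be controlled by $\varepsilon$ after a standard mollification-and-cutoff trick that localises $F$ to a neighbourhood of the contact line. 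Analogous decompositions work for $\mathcal{B}_1,\mathcal{B}_2,\mathcal{B}_4,\mathcal{B}_5$. The key observation is that in each term of $\mathcal{B}_i$, the highest derivative in space (or the Neumann trace factor) is multiplied by either a small coefficient (producing the $\varepsilon C(\omega,\beta,\gamma,F)$ contribution) or a lower-order derivative in the $\omega$-weighted space, which gains a factor $\omega^{-1}$ or $\omega^{-1/q}$ (yielding the $C(\beta,\gamma)(\omega^{-1/q}+\omega^{-1})$ contribution); the remaining low-order terms that do not fit this pattern (e.g.~terms only bounded in some fractional Sobolev norm on a bounded time interval) are controlled by a mixed-derivative embedding with a time factor, giving the $T^\alpha C(\omega,\beta,\gamma)$ contribution. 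I expect this decomposition and bookkeeping of the $\omega$-weights to be the main technical obstacle, since the Triebel-Lizorkin component of the $g_5$-norm in $\mathbb{F}(T)$ must be estimated via the difference characterization of Proposition 2.3 in \cite{meyriesveraartraces}.

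Having \eqref{87364537465gfsd}, I would first choose $\omega_0$ so large that $MC(\beta,\gamma)(\omega^{-1/q}+\omega^{-1})\leq 1/6$ for all $\omega\geq\omega_0$, then choose $\varepsilon>0$ small so that $M\varepsilon C(\omega_0,\beta,\gamma,F)\leq 1/6$ whenever $|\beta|_{C^1}+|\gamma|_{C^1}\leq\varepsilon$, and finally $T>0$ small so that $MT^\alpha C(\omega_0,\beta,\gamma)\leq 1/6$. Then $\|L_\omega^{-1}\mathcal{B}\|_{\mathcal{B}(\mathbb{E}(T))}\leq 1/2$, so $I+L_\omega^{-1}\mathcal{B}$ is invertible by Neumann series and hence $L_\omega+\mathcal{B}=L_\omega(I+L_\omega^{-1}\mathcal{B})$ is boundedly invertible from $\mathbb{E}(T)$ onto $\mathbb{F}(T)$ with operator norm bounded by $2M$. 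Translating back via $F$ yields the unique solution $(h,\mu)\in\mathbb{E}(T)$ of \eqref{fjnjj43t5743} together with the claimed bound $|h|_{E,1,\omega}+|\mu|_{E,2,\omega}\leq 2M\bigl(|g_1|_{F,1,\omega}+\cdots+|h_0|_{F,6,\omega}\bigr)$, completing the proof.
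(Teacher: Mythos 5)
Your proposal is correct and follows essentially the same route as the paper: the paper's "proof" of Theorem \ref{9dfg38645} is precisely the discussion preceding its statement — pull back via the diffeomorphism $F$, view the bent-half-space operator as the flat operator $L_\omega$ (which by Section \ref{section34345439} has $\omega$-uniform maximal-regularity constant $M$ in the $\omega$-weighted norms) plus the small perturbation $\mathcal B$, invoke \eqref{87364537465gfsd}, and conclude via Neumann series that $L_\omega+\mathcal B = L_\omega(I+L_\omega^{-1}\mathcal B)$ is boundedly invertible with norm $\leq 2M$. One small imprecision in your elaboration: in the $\mathcal B_3$ example, the coefficient $D^2F$ is of size $|\beta|_{C^2}+|\gamma|_{C^2}$, which is \emph{not} assumed small; that term contributes to the $C(\beta,\gamma)(\omega^{-1}+\omega^{-1/q})$ piece (lower-order derivative gains a factor of $\omega^{-1}$ in the weighted norm), not to the $\varepsilon$-piece as your sentence suggests — your subsequent "key observation" paragraph gets this bookkeeping right.
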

\subsection{Localization Procedure}
Let us now be concerned with the shifted problem on a bounded smooth domain $\Omega \subset \R^n$, where $\Sigma$ is a perpendicular smooth surface inside.

 More precisely, the system reads as
\begin{subequations} \label{fjnjj43574fgdfg3}
\begin{align} \label{9236749376dfgdfg454}
\partial_t h + \omega^3 h + \ljump n_\Sigma \cdot \nabla \mu \rjump &= g_1, && \text{on } \Sigma, \\
\mu|_\Sigma + \Delta_{\Sigma} h &= g_2, && \text{on } \Sigma, \\
\omega^2 \mu - \Delta \mu &= g_3, && \text{on } \Omega\backslash \Sigma,  \label{8dfgdfg304765347} \\
n_{\partial\Omega} \cdot \nabla \mu |_{\partial \Omega} &= g_4, && \text{on } \partial\Omega, \label{830476dfgdfg5347A}\\
n_{\partial\Omega}  \cdot \nabla_{\Sigma} h |_{\partial\Sigma} &= g_5, && \text{on } \partial\Sigma, \\
h|_{t=0} &= h_0, && \text{on } \Sigma, \label{9236ergrg749376454Z}
\end{align}
\end{subequations}
where $\omega \geq \omega_0$ and $\omega_0 > 0$ is as in Theorem \ref{9dfg38645}.
The main result reads as follows.
\begin{theorem}
Let $n=2,3$, $\Omega \subset \R^n$ be a bounded, smooth domain, $\omega \geq \omega_0$, $6 \leq p < \infty,$ $q \in (3/2,2) \cap (2p/(p+1),2p)$ and $\Sigma$ be a smooth surface inside intersecting $\partial\Omega$ at a constant ninety degree angle.

Then, there is some $T>0$, such that for every $(g_1,g_2,g_3,g_4,g_5,h_0) \in \mathbb F(T)$ satisfying \eqref{8926739873C} there is a unique solution $(h,\mu) \in \mathbb E(T)$ of \eqref{fjnjj43574fgdfg3}.
\end{theorem}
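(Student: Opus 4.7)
The plan is a standard localization argument, reducing \eqref{fjnjj43574fgdfg3} to the model problems already solved: the shifted half-space problem of Theorem \ref{93453845437}, the bent half-space problem of Theorem \ref{9dfg38645}, and classical elliptic theory on the charts not meeting $\Sigma$. The $\omega$-weighted norms of Section \ref{section34345439} are essential, since they make lower-order commutator terms arbitrarily small by choosing $\omega$ large and $T$ small, which is exactly what is needed to close a Neumann series argument.

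First, as in the proof of Theorem \ref{93453845437}, I would reduce to trivial initial data $h_0=0$ and to $g_5|_{t=0}=0$ by constructing an auxiliary function with the correct initial values and subtracting it. Next, I would cover $\overline\Omega$ by finitely many small open sets $U_k$ of four types distinguished by whether $U_k$ meets $\partial\Omega$ and/or $\Sigma$, with a distinguished class centered at the contact line $\partial\Sigma$. Using the curvilinear coordinates of Section 3, each $U_k$ is mapped diffeomorphically onto a piece of a bent half space $\rngamma$ with bent interface $\Sigma_\beta$, and by choosing $U_k$ small enough we can arrange $|\beta|_{C^1}+|\gamma|_{C^1}\le\varepsilon$ for any prescribed $\varepsilon>0$, so that Theorem \ref{9dfg38645} applies on the charts meeting $\partial\Sigma$; analogous (and simpler) model problems treat the remaining charts. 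Let $\{\phi_k\}$ be a subordinate smooth partition of unity, chosen to be constant along the normal coordinate in a neighbourhood of $\partial\Omega\cap U_k$, so that multiplication by $\phi_k$ is compatible with the Neumann condition \eqref{830476dfgdfg5347A} and the ninety-degree contact-angle condition on $\partial\Sigma$.

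Multiplying \eqref{fjnjj43574fgdfg3} by $\phi_k$ and pushing through the chart yields, for each $k$, a bent half-space system of type \eqref{fjnjj43t5743} for $(\phi_k h,\phi_k\mu)$ whose right-hand side equals $\phi_k$ times the original data plus commutator terms such as $[\Delta,\phi_k]\mu$, $[\Delta_\Sigma,\phi_k]h$, the difference $\phi_k\ljump n_\Sigma\cdot\nabla\mu\rjump - \ljump n_\Sigma\cdot\nabla(\phi_k\mu)\rjump$, and analogous contributions from the boundary and contact-line operators. All of these commutators involve only derivatives of $\phi_k$ and are of strictly lower order than the principal part, so by the same $\omega$-scaling argument that produced \eqref{87364537465gfsd} their operator norm $\mathbb E(T)\to\mathbb F(T)$ is bounded by $C(\omega^{-1/q}+\omega^{-1})+T^\alpha C$. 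Applying Theorem \ref{9dfg38645} chart by chart and summing yields a right parametrix $\mathcal S_\omega$ with $L_\omega\mathcal S_\omega=\mathrm{id}+\mathcal C_\omega$, where $L_\omega$ denotes the global operator associated to \eqref{fjnjj43574fgdfg3} and $\mathcal C_\omega$ collects the commutator contributions. Choosing $\omega\ge\omega_0$ sufficiently large and $T>0$ sufficiently small makes $\|\mathcal C_\omega\|<1/2$, and a Neumann series argument then produces the unique solution $(h,\mu)\in\mathbb E(T)$ together with the corresponding estimate.

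The main obstacle will be the careful bookkeeping at the contact line $\partial\Sigma$: the cutoffs must truly produce only lower-order perturbations of the contact-angle condition and must preserve the compatibility condition \eqref{8926739873C}. Both points are handled by the curvilinear construction of Section 3, which lets us choose $\phi_k$ independent of the normal-to-$\partial\Omega$ coordinate near $\partial\Omega$; this turns the commutators with $n_{\partial\Omega}\cdot\nabla$ into purely tangential, strictly lower-order operators and keeps the normal-trace compatibility intact at $t=0$ once $h_0$ and $g_5|_{t=0}$ have been reduced to zero. A secondary technical issue is the uniformity of the maximal-regularity constants across charts, which is immediate since the covering is finite and the chart data can be arranged uniformly small.
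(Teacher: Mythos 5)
Your proposal follows the same localization strategy as the paper: reduce to trivial initial data, cover by small charts, pull back to bent half spaces, invoke Theorem \ref{9dfg38645}, and absorb the lower-order commutator terms via a Neumann series in the $\omega$-weighted norms. This is essentially the argument given in the paper. There are, however, two points where your writeup leaves a genuine gap or is imprecise.

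First, the relation $L_\omega\mathcal S_\omega=\mathrm{id}+\mathcal C_\omega$ with $\|\mathcal C_\omega\|<1$ only yields a \emph{right} inverse, hence surjectivity; it does not by itself give uniqueness. The paper obtains uniqueness first, by assuming a solution $(h,\mu)$, localizing, and solving to derive the fixed-point identity
\begin{equation}
(h,\mu)=\Big(I-\textstyle\sum_j\psi_j(L^j)^{-1}R^j\Big)^{-1}\textstyle\sum_j\psi_j(L^j)^{-1}G^j,
\end{equation}
which is an explicit left inverse and shows $L$ is injective with closed range. The right inverse is then a \emph{separate} construction (in the paper, $S(I+S^R)^{-1}$ with $S=(I-R)^{-1}\sum_j\psi_j(L^j)^{-1}\phi_j$, using the commutator $[L^j,\psi_j]$). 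You describe the localization of a given solution — which is exactly the content needed for the left-inverse step — but then conclude only with the right parametrix, so the two-sided-inverse logic should be separated and stated explicitly.

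Second, in the localization step you appeal to $\omega$ large as well as $T$ small, but the theorem fixes $\omega\ge\omega_0$ as a hypothesis; after the bent half space estimates are in place for that fixed $\omega$, the commutators are of strictly lower order and are absorbed by $T^\alpha$ alone, so the Neumann series closes without enlarging $\omega$. (The estimate \eqref{87364537465gfsd} with its $\omega^{-1/q}$ and $\omega^{-1}$ terms is used only to invert the bent half space operator, which is where $\omega_0$ was chosen.) Your extra requirement that the cutoffs be constant in the normal direction near $\partial\Omega$ is harmless but unnecessary: the resulting terms $n_{\partial\Omega}\cdot(\nabla\phi_j)\mu$ and $n_{\partial\Omega}\cdot(\nabla_\Sigma\phi_j)h$ are already lower order and go into $R^j$ without any special choice of $\phi_j$.
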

\begin{proof}
Firstly, we can reduce the system to the case where $(g_2,g_3,g_4,h_0) = 0$ by solving auxiliary problems, cf. the proof of Theorem \ref{93453845437} and Theorem \ref{3948srthth6543}. We are now left to solve
\begin{subequations} \label{fjnjj4357D4fgdfg3}
\begin{align} \label{9236749376edfgdfg454}
\partial_t h + \omega^3 h + \ljump n_\Sigma \cdot \nabla \mu \rjump &= g_1, && \text{on } \Sigma, \\
\mu|_\Sigma + \Delta_{\Sigma} h &= 0, && \text{on } \Sigma, \\
\omega^2 \mu - \Delta \mu &= 0, && \text{on } \Omega\backslash \Sigma,  \label{8dfgdfgd304765347} \\
n_{\partial\Omega} \cdot \nabla \mu |_{\partial \Omega} &= 0, && \text{on } \partial\Omega, \label{830476dfgtdfg5347A}\\
n_{\partial\Omega}  \cdot \nabla_{\Sigma} h |_{\partial\Sigma} &= g_5, && \text{on } \partial\Sigma, \\
h|_{t=0} &= 0, && \text{on } \Sigma, \label{9236erdgrg749376454Z}
\end{align}
\end{subequations}
for possibly modified right hand sides which we do not relabel.

We will now show existence and uniqueness of the solution of this system via the localization method. 
To this end let $(\phi_j)_{j = 0,...,N}\subseteq C^\infty_0(\Rn)$ be a smooth partition of unity with respect to $\Omega$ and the open sets $(U_j)_{j = 0,...,N}\subseteq \Rn$, that is, the support of $\phi_j$ is contained in $U_j$ for each $j=0,...,N$ and $\Omega \subseteq \bigcup_{j=0,...,N} U_j.$ Furthermore, let $(\psi_j)_{j =0,...,N} \subseteq C^\infty_0(\Rn)$ be smooth functions with compact support in $U_j$ such that $\psi_j \equiv 1$ on supp $\phi_j$ for every $ 0 \leq j \leq N.$

Now, by choosing $N$ finite but sufficiently large and, corresponding to that, the open sets $U_j$ sufficiently small, we can assume that, up to a rotation, for each $j = 0,...,N$ there exist smooth curves $\gamma_j, \beta_j$ such that
\begin{equation}
U_j \cap \Omega = \R^n_{\gamma_j} \cap \Omega, \quad U_j \cap \Sigma = \R^n_{\gamma_j} \cap \Sigma_{\beta_j}.
\end{equation}
Furthermore, again by a smallness argument, we can choose $\gamma_j$ and $\beta_j$ such that the $C^1$-norm is as small as we like.

We now assume for a moment that we have a solution $(h,\mu)$ of \eqref{fjnjj4357D4fgdfg3} to derive an explicit representation formula. We therefore multiply every equation with $\phi_j$ and get corresponding equations for the localized functions $(h^j,\mu^j) := \phi_j(h,\mu)$. By doing so, we obtain
\begin{subequations}
\begin{align*}
\omega^3 h^j+ \partial_t h^j + \ljump n_\Sigma \cdot \nabla \mu^j \rjump &=   \phi_j g_1 - \mu|_{\Sigma_j} \nabla \phi_j \cdot n_{\Sigma} , & \text{on } \Sigma_{j}, \label{msmodelomegaj} \\
\mu^j + \Delta_\Sigma h^j &= -(\Delta_\Sigma \phi_j)h - 2\textstyle\sum_{lm} g^{lm} \partial_l \phi_j \partial_m h, & \text{on } \Sigma_{j}, \\
\omega^2 \mu^j - \Delta \mu^j &= (\Delta \phi_j )\mu + 2 \nabla \phi_j \cdot \nabla \mu, & \text{on } \Omega_{j} \backslash \Sigma_j ,\\
n_{\partial {\Omega}} \cdot \nabla\mu^j|_{\partial\Omega_j} &= n_{\partial {\Omega}} \cdot \nabla \phi_j \mu|_{\partial\Omega_j},& \text{on } \partial{\Omega}_j ,\\
n_{\partial {\Omega}} \cdot \nabla_\Sigma h^j|_{\partial \Sigma_j} &= \phi_j g_5 + n_{\partial {\Omega}} \cdot \nabla_\Sigma \phi_j h|_{\partial \Sigma_j},  &\text{on } \partial\Sigma_j, \\
h^j|_{t=0} &= 0, & \text{on } \Sigma_j,
\end{align*} \end{subequations}
where $\Sigma_j := \Sigma_{\beta_j}$, $\Omega_j := \R^n_{\gamma_j}$, $(g_{lm})$ is the first fundamental form of $\Sigma_j$ with respect to the surface $\Sigma$ and $(g^{lm})$ its inverse. This way, we obtain a finite number of bent half space problems. Denote by $L^j$ the linear operator on the right hand side of the above system, as well as the data $G^j :=(\phi_j f,0,0,0,\phi_j b,0 )$ and the perturbation operator $R^j$ such that the right hand side equals $G^j + R^j(h,\mu)$, we can write the system of localized equations as
\begin{equation}
L^j (h^j,\mu^j) = G^j + R^j(h,\mu), \quad j=0,...,N.
\end{equation}
Since each $L^j$ is invertible, we may derive the representation formula
\begin{equation} \label{7364fdgfg34}
(h,\mu) = \sum_{j=0}^N \psi_j (L^j)^{-1} G^j + \sum_{j=0}^N \psi_j (L^j)^{-1} R^j(h,\mu).
\end{equation}
Since now $ R :=  \sum_{j=0}^N \psi_j (L^j)^{-1} R^j$ is of lower order, a Neumann series argument now yields that $I - R$ is invertible if $T>0$ is small enough, hence we can rewrite \eqref{7364fdgfg34} as
\begin{equation} \label{93467546398453}
(h,\mu) = (I-R)^{-1} \sum_{j=0}^N \psi_j (L^j)^{-1} G^j.
\end{equation}
Letting $L$ be the linear operator from the left hand side of \eqref{fjnjj4357D4fgdfg3},  we obtain from \eqref{93467546398453} that $L$ is injective, has closed range and a left inverse. It remains to show that $L : \mathbb E(T) \pfeil \mathbb F(T)$ has a right inverse. To this end let $z \in \mathbb F(T)$ be arbitrary. Define \begin{equation} \label{927693dgf4876}
S := (I-R)^{-1} \sum_{j=0}^N \psi_j (L^j)^{-1} \phi_j.
\end{equation} Applying $I-R$ to both sides of \eqref{927693dgf4876} yields a formula for $Sz$, which then entails
\begin{equation}
LSz = z + \sum_{j=0}^N L^j \psi_j (L^j)^{-1} R^j Sz + \sum_{j=0}^N [L^j,\psi_j](L^j)^{-1} \phi_j z, \quad z \in \mathbb F(T).
\end{equation}
Letting $S^R :=  \sum_{j=0}^N L^j \psi_j (L^j)^{-1} R^j S + \sum_{j=0}^N [L^j,\psi_j](L^j)^{-1} \phi_j$, we can show, using again a Neumann series argument involving the fact that the commutator is lower order, that $I + S^R$ is invertible if $T>0$ is small enough. The right inverse of $L$ is therefore given by $S(I+S^R)^{-1}$. This then concludes the proof.
\end{proof}

\subsection{The non-shifted model problem on bounded domains}
In this section we are concerned with problem \eqref{fjnjj43574fgdfg3} for $\omega = 0$. The main result is the following.
\begin{theorem} \label{029786802356805236}
Let $n=2,3$, $\Omega \subset \R^n$ be a bounded, smooth domain, $6 \leq p < \infty,$ $q \in (3/2,2) \cap (2p/(p+1),2p)$ and $\Sigma$ be a smooth submanifold with boundary $\partial\Sigma$ such that $\mathring \Sigma$ is inside $\Omega$ and $\Sigma$ meets $\partial\Omega$ at a constant ninety degree angle.

Then, there is some $T>0$, such that for every $(g_1,g_2,g_3,g_4,g_5,h_0) \in \mathbb F(T)$ satisfying the compatibility condition \eqref{8926739873C} there is a unique solution $(h,\mu) \in \mathbb E(T)$ of \eqref{fjnjj43574fgdfg3} for $\omega = 0$. Furthermore, the solution map is continuous between these spaces.
\end{theorem}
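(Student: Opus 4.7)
The natural idea is to treat the non-shifted operator as a lower-order perturbation of a shifted operator covered by the previous theorem. Fix $\omega_0$ as in Theorem \ref{9dfg38645} and let $L_{\omega_0}, L_0 : \mathbb E(T) \to \mathbb F(T)$ denote the bounded linear operators corresponding to the left-hand side of \eqref{fjnjj43574fgdfg3} with $\omega = \omega_0$ and $\omega = 0$ respectively. By the previous theorem applied to $\omega = \omega_0$, $L_{\omega_0}$ is an isomorphism for all $T$ small enough, and the construction there (via the Neumann series on bent half spaces with trivial initial data, extended to bounded intervals as in the remark following Theorem \ref{93453845437}) gives $\|L_{\omega_0}^{-1}\| \leq 2M$ uniformly in $T \in (0,T_0]$. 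Writing $J : \mathbb E(T) \to \mathbb F(T)$ for the embedding $(h,\mu) \mapsto (h,0,0,0,0,0)$, one has $L_0 = L_{\omega_0} - \omega_0^3 J$ and hence
\begin{equation}
L_0 \;=\; L_{\omega_0} \bigl( I - \omega_0^3 L_{\omega_0}^{-1} J \bigr),
\end{equation}
so it is enough to make $\omega_0^3 L_{\omega_0}^{-1} J$ a strict contraction on a suitable closed subspace.

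First I would reduce to trivial initial data $h_0 = 0$. Using an extension $h^\ast \in \mathbb E(T)$ with $h^\ast|_{t=0} = h_0$, constructed from $h_0 \in X_\gamma$ exactly as in the proof of Theorem \ref{93453845437} (via standard extension theory together with an auxiliary $L_p$--$L_q$ parabolic problem), one substitutes $h = \tilde h + h^\ast$ and absorbs the $h^\ast$-dependent terms into the data. The compatibility condition \eqref{8926739873C} guarantees that the modified $g_5$ vanishes at $t=0$, so the reduced problem lives in the subspaces ${}_0\mathbb E(T)$ and ${}_0 \mathbb F(T)$ consisting of elements whose relevant traces at $t=0$ vanish. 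Continuity of the overall solution map follows from continuity of this extension together with continuity of the inverse we are about to construct.

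The crucial point is that on ${}_0\mathbb E(T)$ the embedding through the first coordinate into $L_p(0,T;X_0)$ has operator norm of order $T$. Indeed, $\tilde h|_{t=0} = 0$ and H\"older's inequality applied to $\tilde h(t) = \int_0^t \partial_s \tilde h(s)\,ds$ give $\|\tilde h(t)\|_{X_0} \leq t^{1/p'}\|\partial_t \tilde h\|_{L_p(0,t;X_0)}$; integrating the $p$-th power over $(0,T)$ yields
\begin{equation}
\|\tilde h\|_{L_p(0,T;X_0)} \;\leq\; p^{-1/p}\, T \, \|\partial_t \tilde h\|_{L_p(0,T;X_0)}.
\end{equation}
Hence $\|J\|_{{}_0\mathbb E(T) \to {}_0\mathbb F(T)} \leq CT$, and combining with the uniform bound on $L_{\omega_0}^{-1}$ gives $\|\omega_0^3 L_{\omega_0}^{-1} J\|_{{}_0\mathbb E(T)} \leq 2MC\omega_0^3 T < 1$ for $T$ sufficiently small. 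A Neumann series inverts $I - \omega_0^3 L_{\omega_0}^{-1} J$ and therefore $L_0$, providing existence, uniqueness and continuity of the solution map for the non-shifted problem.

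The only genuinely delicate aspect is to ensure that $\|L_{\omega_0}^{-1}\|$ is bounded independently of $T$ as $T \to 0$; this is the content of the previous theorem, which we may take for granted. Everything else is a routine perturbation argument, so I do not expect any substantial new difficulty beyond bookkeeping the reduction to ${}_0\mathbb E(T)$ and verifying the Poincar\'e-type estimate above.
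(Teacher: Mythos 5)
Your perturbation identity $L_0 = L_{\omega_0} - \omega_0^3 J$ is incorrect: the shift parameter $\omega$ enters the system \eqref{fjnjj43574fgdfg3} not only through the term $\omega^3 h$ in \eqref{9236749376dfgdfg454} but also through the term $\omega^2 \mu$ in the elliptic equation \eqref{8dfgdfg304765347}. Writing $J_1(h,\mu) := (h,0,0,0,0,0)$ and $J_2(h,\mu) := (0,0,\mu,0,0,0)$, the correct decomposition is $L_0 = L_{\omega_0} - \omega_0^3 J_1 - \omega_0^2 J_2$, and your Neumann series would need the operator norm of $\omega_0^3 L_{\omega_0}^{-1} J_1 + \omega_0^2 L_{\omega_0}^{-1} J_2$ on ${}_0\mathbb E(T)$ to be below $1$.

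The Poincar\'e-type bound works for $J_1$, because $h$ lies in $W^1_p(0,T;X_0)$ with vanishing trace at $t=0$. But it cannot work for $J_2$: the second solution component $\mu$ only lies in $L_p(0,T;W^2_q(\Omega\backslash\Sigma))$, with no time regularity and no trace at $t=0$. For a time-independent $\mu(t,x) = f(x)$ the quotient
\[
\frac{|\mu|_{L_p(0,T;L_q(\Omega))}}{|\mu|_{L_p(0,T;W^2_q(\Omega\backslash\Sigma))}}
\]
is independent of $T$, so $\|J_2\|_{{}_0\mathbb E(T)\to\mathbb F(T)}$ does not tend to zero as $T\to 0$. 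Hence the Neumann series does not close, and the argument as written collapses.

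Intuitively, what is genuinely needed here is the invertibility of the \emph{unshifted} two-phase elliptic operator $-\Delta$ with Dirichlet data on $\Sigma$ and homogeneous Neumann data on $\partial\Omega$. The shift $\omega_0^2$ trivialises that invertibility (it pushes $0$ out of the spectrum), and no small-time factor compensates for removing it. The paper avoids the issue entirely: it first proves the unshifted elliptic solvability (Theorem \ref{3948srthth6543}) by a spectral argument, uses this to eliminate $\mu$ via the solution operator $T_0$, reduces to the scalar evolution equation \eqref{39746534987653048756} for $h$ alone, and only then performs a perturbation in $h$ based on the resolvent identity $T_0 g - T_\eta g = \eta(\eta-\Delta_N)^{-1}T_0 g$. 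The resulting remainder in \eqref{39746534ff987653048756} is genuinely smoother than the leading term and does admit a small factor $T^{1/(3p)}$ (estimate \eqref{29865976430578}). Your $T$-smallness idea survives for the $h$-part, but without first settling the $\omega=0$ elliptic problem you have no way to remove the $\omega_0^2\mu$ term.
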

\begin{proof}
As in the previous section we may reduce to the case $(g_2,g_3,g_4,h_0) = 0$. It is also clear that the $\omega^3$-shift in equation \eqref{9236749376dfgdfg454} can easily be resolved to the case $\omega =0$ by an exponential shift in solution and data. We are therefore left to solve
\begin{subequations} \label{39746534987653048756}
\begin{align}
\partial_t h + \ljump n_\Sigma \cdot \nabla T_0 \Delta_\Sigma h \rjump &= g_1, & &\text{on } \Sigma,\\
n_{\partial\Sigma} \cdot \nabla_\Sigma h|_{\partial\Sigma} &= g_5, & &\text{on } \partial\Sigma, \\
h|_{t=0} &= 0, &&\text{on } \Sigma,
\end{align}
\end{subequations}
where $T_0 g$ is the unique solution of the two-phase elliptic problem
\begin{subequations} \label{fghfghhfgh}
	\begin{align}
	-\Delta u &= 0, && \text{in } \Omega \backslash \Sigma, \\
	u|_\Sigma &= g, && \text{on } \Sigma, \\
	n_{\partial\Omega} \cdot \nabla u|_{\partial\Omega} &= 0, && \text{on } \partial\Omega,
	\end{align}
\end{subequations}
cf. 
 Appendix \ref{9348579348564305}. Also from Appendix \ref{9348579348564305} we obtain that $T_0 \Delta_\Sigma h = T_\eta \Delta_\Sigma h + \eta (\eta-\Delta)^{-1}T_0 \Delta_\Sigma h$, for all $\eta \geq \eta_0$. This implies that problem \eqref{39746534987653048756} is equivalent to
\begin{subequations} \label{39746534ff987653048756}
\begin{align}
\partial_t h + \ljump n_\Sigma \cdot \nabla T_\eta \Delta_\Sigma h \rjump &= g_1+\eta\ljump n_\Sigma \cdot \nabla (\eta-\Delta)^{-1}T_0 \Delta_\Sigma h \rjump , && \text{on } \Sigma,\\
n_{\partial\Sigma} \cdot \nabla_\Sigma h|_{\partial\Sigma} &= g_5, && \text{on } \partial\Sigma, \\
h|_{t=0} &= 0, &&\text{on } \Sigma,
\end{align}
\end{subequations}
provided $\eta \geq \eta_0$. Now choose large enough $\eta$ to render the left hand side of \eqref{39746534ff987653048756} to be an invertible operator. We may estimate
\begin{equation} \label{29865976430578}
| \ljump n_\Sigma \cdot \nabla (\eta-\Delta)^{-1}T_0 \Delta_\Sigma h \rjump |_{L_p(0,T;X_0)} \leq C(\eta) T^{1/(3p)} |h|_{W^1_p(0,T;X_0) \cap L_p(0,T;X_1)},
\end{equation}
whence choosing $T>0$ sufficiently small
and a standard Neumann series argument complete the proof. Here, \eqref{29865976430578} stems from the solution formula \eqref{9280634097236}, real interpolation method and H\"older inequality.
\end{proof}

\section{Nonlinear Well-Posedness}
In this section we will show local well-posedness for the full nonlinear (transformed) system \eqref{fullnonlienar8765} and therefore obtain that also the system \eqref{87959649764dfs} is well-posed.
We will use the maximal $L_p-L_q$ regularity result for the underlying linear problem and a contraction argument via the Banach's fixed point principle.

The main result reads as follows.
\begin{theorem} \label{83457639457628g}
Let $6 \leq p < \infty$, $q \in (5/3,2) \cap (2p/(p+1),2p)$ and $h_0 \in X_\gamma$ sufficiently small. Then there is some possibly small $\tau > 0$, such that \eqref{fullnonlienar8765} has a unique strong solution on $(0,\tau)$, that is, there are 
\begin{equation}
h \in W^1_p(0,\tau; X_0) \cap L_p(0,\tau;X_1), \quad \mu \in L_p(0,\tau;W^2_q(\Omega \backslash \Sigma)),
\end{equation}
solving \eqref{fullnonlienar8765} on $(0,\tau)$, whenever $h_0 $ satisfies the initial compatibility condition $\partial_\nu h_0 = 0$ on $\partial\Sigma$.
\end{theorem}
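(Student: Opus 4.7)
The plan is to rewrite the transformed nonlinear system \eqref{fullnonlienar8765} as a fixed point equation of the form $L_0(h,\mu)=\mathcal N(h,\mu)+\text{data}$, where $L_0$ is the linearization about $h=0$, namely the operator from Theorem \ref{029786802356805236} with $\omega=0$, and $\mathcal N$ collects all genuinely nonlinear residuals. Since Theorem \ref{029786802356805236} asserts that $L_0:\mathbb E(\tau)\to\mathbb F(\tau)$ is an isomorphism onto the subspace satisfying the compatibility condition \eqref{8926739873C}, the problem reduces to a contraction argument for $L_0^{-1}\mathcal N$ on a small closed ball in $\mathbb E(\tau)$.

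First I would reduce to trivial initial data. Using the real interpolation identity $X_\gamma=(X_1,X_0)_{1-1/p,p}$ together with the assumed boundary compatibility $\partial_\nu h_0=0$ on $\partial\Sigma$, construct a reference pair $(h^\ast,\mu^\ast)\in\mathbb E(\tau)$ with $h^\ast|_{t=0}=h_0$ that satisfies the linear compatibility \eqref{8926739873C} at $t=0$; here $\mu^\ast$ is produced by solving the two-phase elliptic problem \eqref{fghfghhfgh} at each fixed time with Dirichlet datum $-\Delta_\Sigma h^\ast|_\Sigma$. Writing $(h,\mu)=(h^\ast,\mu^\ast)+(\bar h,\bar\mu)$, the unknowns $(\bar h,\bar\mu)$ lie in the subspace of $\mathbb E(\tau)$ with vanishing traces at $t=0$. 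Expanding $K(h)=-\Delta_\Sigma h+(P(h)-P(0))h+Q(h)$ by Lemma \ref{9286fg349840} and $\Delta_h$, $\nabla_h$, $n_{\partial\Omega}^h$ about $h=0$ by Lemma \ref{93845703746534}, the problem becomes $L_0(\bar h,\bar\mu)=\mathcal N(h^\ast+\bar h,\mu^\ast+\bar\mu)-L_0(h^\ast,\mu^\ast)$, so that fixed points of
$$\mathcal K(\bar h,\bar\mu):=L_0^{-1}\bigl[\mathcal N(h^\ast+\bar h,\mu^\ast+\bar\mu)-L_0(h^\ast,\mu^\ast)\bigr]$$
on a small closed ball yield solutions to \eqref{fullnonlienar8765}.

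The routine nonlinear estimates for the components $\mathcal N_j$ with $j=1,2,3,4$, controlling the bulk equation, the curvature defect, the Neumann trace and the kinematic equation, respectively, rely on the differentiability statements of Lemmas \ref{9286fg349840} and \ref{93845703746534}, the algebra property of Lemma \ref{danchinlemma}, the embedding $X_\gamma\hookrightarrow C^2(\Sigma)$ (ensured by the hypotheses on $p,q$), and elementary Hölder-in-time arguments producing factors of $\tau^\alpha$ with $\alpha>0$. Each of these contributions can be made Lipschitz with an arbitrarily small constant on a ball of small radius $r$ in $\mathbb E(\tau)$ provided that $|h_0|_{X_\gamma}$, $r$ and $\tau$ are taken small enough.

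The hard part will be the contact-angle residual $\mathcal N_5$ corresponding to the right-hand side of \eqref{2938dfg623}, which has to be estimated in the anisotropic space $F^{1-2/(3q)}_{pq}(0,\tau;L_q(\partial\Sigma))\cap L_p(0,\tau;W^{3-2/q}_q(\partial\Sigma))$. Expanding $n_{\partial\Omega}^h$ and $n_{\Gamma_h}$ around $h=0$ shows that $\mathcal N_5$ is at least quadratic in the boundary traces of $h$ and $\nabla_\Sigma h$, with smooth coefficients depending on $h$; the strengthened requirement $q>5/3$ is precisely what keeps the spatial order $3-2/q$ in a range where these products can be handled by Lemma \ref{danchinlemma} together with trace theory, whereas the weaker $q>3/2$ used in the linear theorem would not suffice. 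The Triebel--Lizorkin seminorm in time is controlled via the difference characterization of Proposition 2.3 in \cite{meyriesveraartraces}, exploiting the embedding $\mathbb E(\tau)\hookrightarrow C([0,\tau];X_\gamma)\hookrightarrow C([0,\tau];C^2(\Sigma))$ so that products of a time-continuous factor with a Triebel--Lizorkin-regular one can be estimated by the seminorm of the latter times the uniform norm of the former. Since $\mathcal N_5$ vanishes at $h=0$ and $\bar h|_{t=0}=0$, the resulting constant carries a factor tending to zero as $\tau$ and $r$ shrink. Applying the same estimates to differences and invoking Banach's fixed point theorem in the closed ball of radius $r$ then produces the unique local strong solution asserted in the theorem.
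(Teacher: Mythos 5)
Your strategy --- reduction to trivial initial data, inversion of the linearization $L_0$ from Theorem~\ref{029786802356805236}, and a contraction on a small closed ball in ${_0\mathbb E}(\tau)$ using Lemmas~\ref{9286fg349840} and~\ref{93845703746534} together with the algebra structure of the trace space --- is essentially the paper's. There is, however, a gap at the very end: Banach's fixed-point principle on $\mathsf B(r,0)$ delivers uniqueness of the fixed point \emph{within that ball only}, whereas the theorem asserts uniqueness of the strong solution. The paper closes this with a continuation argument: given a second solution $\tilde z\in\mathsf B(r',0)$ with $r'>r$, one sets $T_*:=\sup\{t:\bar z(t')=\tilde z(t')\text{ for all }t'\le t\}$, shows $T_*>0$ by rerunning the contraction on $\mathsf B(r',0)$ over a shorter interval, notes that $\bar z(T_*)\in X_\gamma$ again satisfies the compatibility condition $\partial_\nu[\bar z(T_*)]=0$ on $\partial\Sigma$, and then re-solves from that datum to push $T_*$ strictly forward, contradicting maximality. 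You need to supply a step of this kind to justify the claim of uniqueness.

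A secondary point on your reference pair: the paper chooses $z_*=L^{-1}(0,\dots,0,h_0)$, so that $Lz_*$ has all components zero except the initial datum; then $\tilde N(0)=N(z_*)-Lz_*$ is quadratic in $z_*$ except for the $Q(0)$ contribution to the second component, which is controlled by the time-smallness estimate $|Q(0)|_{L_p(0,T;W^{2-1/q}_q(\Sigma))}\le T^{1/p}|Q(0)|_{L_\infty(0,T;W^{2-1/q}_q(\Sigma))}\to 0$. Your $(h^*,\mu^*)$, by contrast, is constructed by solving the elliptic problem for $\mu^*$ at frozen time and does not satisfy the coupled evolution equation $\partial_t h^*+\ljump n_\Sigma\cdot\nabla\mu^*\rjump=0$, so $\mathcal K(0)$ picks up a term from $L_0(h^*,\mu^*)$ that is only \emph{linearly} small in $|h_0|_{X_\gamma}$ rather than quadratically. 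This is not fatal --- you assume $|h_0|_{X_\gamma}$ small anyway --- but you should make explicit that $|h_0|_{X_\gamma}$ must be taken small relative to the ball radius $r$ so that $|\mathcal K(0)|_{{_0\mathbb E}(\tau)}\le r/2$, and you should also note (as the paper does implicitly by solving a maximal-regularity problem for $z_*$) that the constant-in-time extension of $h_0$ is not an option here since $X_\gamma\subsetneq W^{4-1/q}_q(\Sigma)$, so $h^*$ must be built via a regularizing extension to land in $\mathbb E(\tau)$ at all.
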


\begin{proof}
Define $L : \mathbb E(T) \pfeil \mathbb F(T)$ as
\begin{equation}
L(h,\mu) = \begin{pmatrix}
\partial_t h + \ljump n_\Sigma \cdot \nabla \mu \rjump \\
\mu|_\Sigma - P(0)h \\
\Delta \mu \\
n_{\partial\Omega }\cdot \nabla \mu|_{\partial\Omega} \\
n_{\partial\Sigma} \cdot \nabla_\Sigma h|_{\partial \Sigma} \\
h|_{t=0}
\end{pmatrix}.
\end{equation}
We now reduce to trivial initial data as follows, cf. \cite{koehnepruesswilke}. Since $h_0$ satisfies the compatibility condition, we may solve
\begin{equation}
Lz_* = L(h_*,\mu_*) = (0,0,0,0,0,h_0)
\end{equation}
by some $z_* = (h_*,\mu_*) \in \mathbb E(T)$. Then the problem \eqref{fullnonlienar8765}  is equivalent to finding some $z = (h,\mu) \in {_0 \mathbb E}(T)$ solving
\begin{equation}
L(z)= N(z+z_*)-Lz_* =: \tilde N(z), \quad \text{in } {_0 \mathbb F(T)},
\end{equation}
where the nonlinear part is given by
\begin{align}
&N(z+z_*) = \\ & \begin{pmatrix}
\ljump n_\Sigma^{h+h_*} \cdot \nabla_{h+h_*} (\mu+\mu_*) \rjump - \ljump n_\Sigma \cdot \nabla (\mu+\mu_*) \rjump + (\beta(h+h_*)|n_\Sigma^{h+h_*} - n_\Sigma ) \\
K(h+h_*) - P(0)(h+h_*) \\
(\Delta-\Delta_{h+h_*})(\mu+\mu_*) \\
n_{\partial\Omega} \cdot \nabla (\mu+\mu_*)|_{\partial\Omega} - n_{\partial\Omega}^{h+h_*} \cdot \nabla_{h+h_*} (\mu+\mu_*)|_{\partial\Omega} \\
n_{\partial\Sigma} \cdot \nabla_\Sigma (h+h_*)|_{\partial\Sigma} - n_{\partial\Sigma}^{h+h_*} \cdot n_\Sigma^{h+h_*} \\
h_0
\end{pmatrix}. \nonumber
\end{align}
We may now define $K : {_0 \mathbb E(T)} \pfeil  {_0 \mathbb E(T)}$ by $z \mapsto L^{-1}\tilde N(z) = L^{-1}( N(z+z_*) - Lz_* )$. By restricting to functions with vanishing trace at time zero, we get that the operator norm $| L^{-1}|_{ \mathcal B ( {_0 \mathbb F(T) }; {_0 \mathbb E(T)} )}$ stays bounded as $T \pfeil 0$. 

\begin{lemma} \label{283746504376}
The mapping $N : { \mathbb E(T)} \pfeil { \mathbb F(T)}$ is well-defined and bounded. Furthermore, $N \in C^2( { \mathbb E(T)}; { \mathbb F(T)})$.
Furthermore, $N$ allows for contraction estimates in a neighbourhood of zero, that is,
\begin{equation*}
| N(z_1 + z_*) - N(z_2 + z_*) |_{ _0 \mathbb F(T)} \leq C(| z_1 |_{ _0 \mathbb E(T)} + |  z_2|_{ _0 \mathbb E(T)} + |  z_*|_{ \mathbb E(T)}) | z_1 - z_2|_{ _0 \mathbb E(T)},
\end{equation*}
for all $z_1, z_2 \in \mathsf B(r;0) \subset {_0 \mathbb E(T)}$, if $r>0$ and $T=T(r) >0$ are sufficiently small. Here, $\mathsf B(r;0)$ denotes the closed ball around $0$ with radius $r > 0$.
\end{lemma}
Let now $\delta > 0$, such that $|h_0|_{X_\gamma} \leq \delta.$
By choosing $r>0$, $T=T(r) >0$ and $\delta = \delta(T) >0$ sufficiently small, we ensure $K$ to be a $1/2$-contraction on $\mathsf B(r,0) \subset  {_0 \mathbb E(T)}$. Note at this point that $|  z_*|_{ \mathbb E(T)} \leq C(T) \delta$.
Let us note that
\begin{equation}
\tilde N(0) = N(z_*) - Lz_*, \quad K(0) = L^{-1} \tilde N(0).
\end{equation}
Note $\tilde N(0) \in {_0 \mathbb F(T)}$, whence
\begin{equation}
| K(0) |_{ _0 \mathbb E(T)} \leq |L^{-1} |_{ \mathcal B({_0 \mathbb F(T)} ; {_0 \mathbb E(T)})} | \tilde N(0)|_{ _0 \mathbb F(T)}.
\end{equation}
Now we note that $\tilde N(0)$ is quadratic in $z_* = (h_*, \mu_*)$ except for the term $Q(0)$ in $\tilde N(0)_2$. Using 
\begin{equation}
| Q(0) |_{L_p(0,T; W^{2-1/q}_q(\Sigma))} \leq T^{1/p} | Q(0) |_{L_\infty(0,T; W^{2-1/q}_q(\Sigma))} \pfeil_{ T \pfeil 0}  0,
\end{equation}
as well as
\begin{equation}
| z_* |_{ \mathbb E(T)} \leq |L^{-1} |_{ \mathcal B({ \mathbb F(T)} ; {\mathbb E(T)})} |h_0|_{X_\gamma}
\end{equation}
finish the proof by choosing first $r>0, T = T(r) >0$ and then $|h_0|_{X_\gamma}$ small enough.
Hence Banach's fixed point principle yields the existence of a unique fixed point $\bar z \in \mathsf B(r,0) \subset {_0 \mathbb E(T)}$.
By standard arguments this is then the unique fixed point in all of ${_0 \mathbb E(T)}$. Assume there is a different fixed point $\tilde z$ in a possibly larger ball $\mathsf B(r',0)$. Then define
\begin{equation} \label{34853454354355454}
T_* := \sup \{ t  \geq 0 : \bar z (t') = \tilde z (t') \text{ for all } 0 \leq t' \leq t \}.
\end{equation}
By performing the above fixed point argument on the larger ball $\mathsf B(r',0)$ on a smaller time interval we see that $T_* > 0$. Then solve the nonlinear problem with initial value $\bar z (T_*) \in X_\gamma$. Note that $\bar z (T_*)$ satisfies the compatibility condition $\partial_\nu [ \bar z (T_*)] = 0$ on $\partial \Sigma$, hence we may obtain a unique solution on a larger time interval $(0, T_* + \epsilon_* )$ for some $\epsilon_* > 0$. This contradicts \eqref{34853454354355454} and the fixed point has to be unique.
This then in turn yields the uniqueness of the solution to \eqref{fullnonlienar8765}.
\end{proof}
Let us comment on how to prove Lemma \ref{283746504376}. Using the differentiability properties from Lemma \ref{93845703746534}, the statement easily follows for the components $N_1,N_3$ and $N_4$. The  decomposition $K(h)=P(h)h + Q(h)$ from Lemma \ref{9286fg349840} renders a proof for $N_3$. For $N_5$ we note that Depner in \cite{depnerdiss} calculated the linearization of the ninety-degree angle boundary condition \eqref{2938dfg623}, which turns out to be $n_{\partial\Sigma} \cdot \nabla_\Sigma h|_{\partial\Sigma} = 0$, which in turn then allows for estimates for $N_5$. Note that we also use the Banach algebra property for the trace space, cf. Theorem \ref{banachalgebra} in the appendix. For sake of readability we omit the details here.

\begin{remark}
We point out that the proof of Theorem \ref{83457639457628g} also gives well-posedness of \eqref{fullnonlienar8765} in the case where $\Omega = G \times (L_1,L_2)$ is a bounded container in $\R^n$, $n=2,3$. Hereby $G \subset \R^{n-1}$ is a smooth, bounded domain. In this case there is another model problem in the localization procedure for the linear problem stemming from when the top and bottom of the container $G \times \{ L_1 , L_2 \}$ intersect the walls $\partial G \times (L_1,L_2)$. This elliptic problem, although being a problem on a domain with corners, admits full regularity for the solution, cf. the appendix in Section \ref{384075647564574695}.
\end{remark}

\section{Convergence to equilibria}
This section is devoted to the long-time behaviour of solutions to \eqref{87959649764dfs} starting close to
equilibria. We will characterize the set of equilibria, study the spectrum of the linearization of the transformed Mullins–Sekerka equations around an equilibrium and apply the generalized principle of linearized stability to show that solutions starting sufficiently close to certain equilibria converge to an equilibirum at an exponential rate in $X_\gamma$.

We note that the potential $\mu$ can always be reconstructed by $\Gamma(t)$ by solving the elliptic two-phase problem
\begin{subequations}
\begin{align}
\mu|_{\Gamma(t)} &= H_{\Gamma(t)}, && \text{on } \Gamma(t), \label{82734659645} \\
\Delta \mu &= 0, && \text{in } \Omega \backslash \Gamma(t), \\
n_{\partial\Omega} \cdot \nabla \mu|_{\partial\Omega} &= 0, && \text{on } \partial\Omega.
\end{align}
\end{subequations}
Whence we may concentrate on the set of equilibria for $\Gamma (t)$.

It now can easily be shown that for a stationary solution $\Gamma$ of \eqref{87959649764dfs} with $V_\Gamma = 0$ the corresponding chemical potential $\mu$ is constant, since then $\mu$ and $\nabla \mu$ have no jump across the interface $\Gamma$ and $\mu \in W^2_q(\Omega)$ solves a homogeneous Neumann problem on $\Omega$. By \eqref{82734659645}, the mean curvature $H_\Gamma$ is constant. The set of equilibria for the flow $\Gamma(t)$ is therefore given by
\begin{equation}
\mathcal E = \{  \Gamma : H_\Gamma = \text{const.} \}.
\end{equation}
Let us now consider the case where $\Omega \subset \R^n$, $n=2,3$, is a bounded container, that is,  $\Omega := \Sigma \times (L_1,L_2),$ where $-\infty < L_1 < 0 < L_2 < \infty$ and $\Sigma \subset \R^{n-1} \times \{ 0 \}$ is a bounded domain and $\partial\Sigma$ is smooth.

Note that flat interfaces are equilibria. Arcs of circles intersecting $\partial\Omega$ at a ninety degree angle also belong to $\mathcal E$, since then \eqref{983457034} is also satisfied. 

 If we now additionally assume that the contact points between $\Gamma$ and $\partial\Omega$ are only on the walls of the cylinder and $\Gamma$ is given as a graph over $\Sigma$, we may even deduce that $H_\Gamma = 0$, that is, $\Gamma$ is a flat interface described by a constant height function over the reference surface. This follows from the fact that we can describe $\Gamma$ as graph of a height function $h$ over $\Sigma$. Then using the well-known formula
 \begin{equation} \label{9485674rthrthrthtrh}
 H_\Gamma =  \div \bigg( \frac{\nabla h}{\sqrt{1+|\nabla h|^2}} \bigg)
\end{equation}  
 and the boundary condition \eqref{983457034} on $\partial\Gamma$ renders $H_\Gamma = 0$. 
 Indeed, assume that $\Gamma = \Gamma_h$ is a graph of $h$ over $\Sigma$. We may assume that $h$ has mean value zero and we already know $H_\Gamma$ is constant, but may be nonzero. Then an integration by parts entails
 \begin{equation}
 0 = \int_\Sigma h H_\Gamma dx = - \int_\Sigma \frac{ \nabla h \cdot \nabla h}{ \sqrt{ 1 + | \nabla h|^2} } dx.
 \end{equation}
 The boundary integral vanishes due to \eqref{983457034} and renders $\nabla h$ to be zero in $\Sigma$, hence $h$ is constant. This implies $H_\Gamma = 0$.
 
 We will now study the problem for the height function in an $L_p$-setting.
 We now rewrite the geometric problem \eqref{983457034} as an abstract evolution equation for the height function $h$, cf. \cite{abelswilke}, \cite{pruessmulsek}, \cite{eschersimonett}. As seen before, by means of Hanzawa transform, the full system reads as
\begin{subequations} \label{fullnrsthrthonlienar8765}
\begin{align}
\partial_t h &= - \ljump n_\Sigma^h \cdot \nabla_h \mu \rjump , && \text{on } \Sigma, \label{456rthrth356} \\
\mu|_{\Sigma} &= H_\Gamma(h), && \text{on } \Sigma, \\
\Delta_h \mu &= 0, && \text{in } \Omega \backslash \Sigma, \\
n_{\partial\Omega} \cdot \nabla_h \mu|_{\partial\Omega} &= 0, && \text{on } \partial\Omega, \\
n_{\partial\Sigma} \cdot \nabla_\Sigma h|_{\partial\Sigma} &= 0, && \text{on } \partial\Sigma, \label{293fgdhfgh8dfg623} \\
h|_{t=0} &= h_0, && \text{on } \Sigma. \label{45fghfg6356Z}
\end{align}
\end{subequations}
Let us note that due to working in a container, the highly nonlinear angle condition \eqref{2938dfg623} reduces to a linear one, condition \eqref{293fgdhfgh8dfg623}. Define $B(h)g := \ljump n_\Sigma^h \cdot \nabla g \rjump$ and $S(h)g$ as the unique solution of the elliptic problem
\begin{subequations}
\begin{align}
\mu|_{\Sigma} &= g, && \text{on } \Sigma, \\
\Delta_h \mu &= 0, && \text{in } \Omega \backslash \Sigma, \\
n_{\partial\Omega} \cdot \nabla_h \mu|_{\partial\Omega} &= 0, && \text{on } \partial\Omega.
\end{align}
\end{subequations}
Recalling Lemma \ref{9286fg349840}, we may rewrite \eqref{fullnrsthrthonlienar8765} as an abstract evolution equation,
\begin{subequations} \label{930476853486geudfgh}
\begin{align}
\frac{d}{dt}h(t) + A(h(t))h(t) &= F(h(t)), \quad t \in \R_+, \\
h(0) &= h_0,
\end{align}
\end{subequations}
where $A(h)g := B(h)S(h)P(h)g$, equipped with domain
\begin{equation}
D(A(h)) := W^{4-1/q}_q(\Sigma) \cap \{ g : n_{\partial\Sigma} \cdot \nabla g = 0 \text{ on } \partial\Sigma \},
\end{equation}
and $F(g) := -B(g)S(g)Q(g)$. We now want to study \eqref{930476853486geudfgh} in an $L_p$-setting. Define
\begin{equation}
X_0 := W^{1-1/q}_q(\Sigma), \quad X_1 := W^{4-1/q}_q(\Sigma), \quad X_\gamma := (X_1,X_0)_{1-1/p,p}.
\end{equation}
We now interpret problem \eqref{930476853486geudfgh} as an evolution equation in $L_p(\R_+;X_0)$, fitting in the setting of Prüss, Simonett and Zacher \cite{pruessmulsek}.

Regarding the linearization of \eqref{87959649764dfs}, we have the following result.
\begin{lemma} \label{98376458037645083476}
Let $6 \leq p < \infty$, $q \in (5/3,2) \cap (2p/(p+1),2p)$. Then the following statements are true.
\begin{enumerate} \itemsep-0.1em
\item The derivative of $H_\Gamma$ at $h=0$ is given by $ [ h \mapsto \Delta_\Sigma h]$, 
\item there is an open neighbourhood of zero $V \subset X_\gamma$, such that $(A,F) \in C^1(V; \mathcal B(X_1;X_0) \times X_0)$,
\item the linearization of $A$ at zero is given by $A_0 = A(0)$, where $A_0 : D(A_0) \pfeil X_0$, $A_0 h = -\ljump n_\Sigma \cdot T \Delta_\Sigma h \rjump$ with domain
\begin{equation}
D(A_0) = X_1 \cap \{   h : n_{\partial\Sigma} \cdot \nabla_\Sigma h|_{\partial\Sigma} = 0 \text{ on } \partial\Sigma \}.
\end{equation}
Here, $T : W^{2-1/q}_q(\Sigma) \pfeil W^2_q(\Omega \backslash \Sigma), g \mapsto \chi,$ is the solution operator for the elliptic two-phase problem
\begin{subequations} \label{87tzfc3645836}
	\begin{align}
	\Delta \chi &= 0, && \text{in } \Omega \backslash \Sigma, \\
	\chi|_\Sigma &= g, && \text{on } \Sigma, \\
	n_{\partial\Omega} \cdot \nabla \chi|_{\partial\Omega} &= 0, && \text{on } \partial\Omega,
	\end{align}
\end{subequations}
\item the set of equilibria, that is, the solutions of $A(h)h=F(h)$ is $\mathcal E = \{ h = \text{const}. \}$,
\item $A_0$ has maximal $L_p$-regularity,
\item the kernel of $A_0$ are the constant functions, $N(A_0) = \{ h = \text{const}. \}$,
\item $N(A_0) = N(A_0^2)$.
\end{enumerate}
\end{lemma}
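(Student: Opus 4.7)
I would address the seven items in order, reducing each to the Hanzawa decomposition of Lemma \ref{9286fg349840}, the transport estimates of Lemma \ref{93845703746534}, Theorem \ref{029786802356805236}, or an energy identity for the two-phase harmonic extension $T$ defined by \eqref{87tzfc3645836}.

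Items (i)--(iii) are essentially computational. For (i), linearizing the graph formula \eqref{9485674rthrthrthtrh} at $h=0$ yields the Laplace--Beltrami operator, and the same conclusion can be read off from $K(h) = P(h)h + Q(h)$ in Lemma \ref{9286fg349840}. For (ii), decompose $A(h) = B(h)S(h)P(h)$ and $F(h) = -B(h)S(h)Q(h)$. By Lemma \ref{93845703746534} the maps $h \mapsto \nabla_h$, $h \mapsto \Delta_h$, $h \mapsto n_\Sigma^h$, $h \mapsto n_{\partial\Omega}^h$ are all $C^1$ on $\mathcal U$; applying the implicit function theorem to the $h$-transport of \eqref{87tzfc3645836} with $C^1$-coefficients yields $h \mapsto S(h) \in C^1(V; \mathcal B(W^{2-1/q}_q(\Sigma); W^2_q(\Omega \setminus \Sigma)))$ on a possibly smaller neighbourhood $V \subset X_\gamma$, while Lemma \ref{9286fg349840} supplies $P,Q \in C^1$. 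The chain rule then delivers (ii), and evaluating at $h=0$ gives (iii); the linear angle condition in $D(A_0)$ comes from the linearization of \eqref{2938dfg623} computed in \cite{depnerdiss}, which is precisely \eqref{293fgdhfgh8dfg623}.

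For (iv), the algebraic identity $A(h)h - F(h) = B(h)S(h)K(h)$ shows that $h \in \mathcal E$ iff $\mu := S(h)K(h)$ satisfies $\ljump n_\Sigma^h \cdot \nabla_h \mu \rjump = 0$. Since $\mu$ is $\Delta_h$-harmonic with homogeneous Neumann trace on $\partial\Omega$, pulling back to $\Omega \setminus \Gamma_h$ and testing against $\mu$ (as in the energy computation in the introduction) forces $\nabla \mu \equiv 0$, so $H_\Gamma$ is constant along $\Gamma_h$; the container argument given just below \eqref{9485674rthrthrthtrh} -- testing $H_\Gamma$ against $h$, integrating by parts on $\Sigma$ and using \eqref{2938dfg623} to discard the boundary term -- then forces $\nabla h \equiv 0$, hence $\mathcal E = \{h = \mathrm{const.}\}$. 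Item (v) is a direct consequence of Theorem \ref{029786802356805236} with $g_2=g_3=g_4=g_5=h_0=0$ and $\omega=0$: the solution map $g_1 \mapsto h$ provides exactly maximal $L_p$-regularity for $A_0$.

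The substantive part is (vi)--(vii), where I expect the main obstacle to lie. For (vi), $A_0 h = 0$ says that $\mu := T\Delta_\Sigma h$ has no jump in its normal derivative; combined with $\Delta\mu = 0$ on $\Omega \setminus \Sigma$ and the zero Neumann condition on $\partial\Omega$, this upgrades $\mu$ to a $W^2_q(\Omega)$-harmonic function with homogeneous Neumann data, hence a constant. Thus $\Delta_\Sigma h \equiv c$; integrating over $\Sigma$ and using $n_{\partial\Sigma} \cdot \nabla_\Sigma h|_{\partial\Sigma} = 0$ together with the divergence theorem gives $c\,|\Sigma| = 0$, and the homogeneous Neumann Laplace problem on $\Sigma$ then forces $h$ constant. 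For (vii), given $h \in N(A_0^2)$, set $w := A_0 h \in N(A_0)$; by (vi) $w$ is a constant. The decisive observation is that the range of $A_0$ lies in the mean-zero subspace of $X_0$, since
\begin{equation*}
\int_\Sigma A_0 h\, d\mathcal H^{n-1} = -\int_\Sigma \ljump n_\Sigma \cdot \nabla\mu \rjump d\mathcal H^{n-1} = -\int_{\Omega \setminus \Sigma} \Delta\mu\, dx + \int_{\partial\Omega} n_{\partial\Omega} \cdot \nabla\mu\, d\mathcal H^{n-1} = 0
\end{equation*}
by harmonicity of $\mu = T\Delta_\Sigma h$ and the homogeneous Neumann condition on $\partial\Omega$. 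A constant function with zero mean vanishes, so $w = 0$ and $h \in N(A_0)$, completing the proof of $N(A_0) = N(A_0^2)$.
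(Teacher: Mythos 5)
Your proposal matches the paper's own proof essentially item by item: the linearization of the graph formula for (i), the $C^1$-differentiability of $B$, $S$, $P$, $Q$ for (ii)--(iii), the "no jump implies globally harmonic with Neumann data, hence constant" argument for (iv) and (vi) with the angle condition killing the boundary term, Theorem \ref{029786802356805236} for (v), and the mean-zero range observation for (vii). The only cosmetic differences are that you phrase (iv) via the energy identity where the paper invokes uniqueness of the perturbed Neumann problem, and you write out the mean-zero computation that the paper leaves implicit; these are the same arguments.
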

\begin{proof}
\begin{enumerate}\itemsep-0.1em
\item This stems from linearizing \eqref{9485674rthrthrthtrh} at $h= 0$.
\item Again by Lemma \ref{9286fg349840}, there is a small neighbourhood of zero $V\subset X_\gamma$ such that $P  \in C^1(V;\mathcal B(X_1;W^{2-1/q}_q(\Sigma))$ and $Q \in C^1(V;W^{2-1/q}_q(\Sigma))$. Following the lines of \cite{abelswilke} using Lemma \ref{93845703746534} we can show that 
\begin{equation}
S \in C^1(V; \mathcal B( W^{2-1/q}_q(\Sigma) ; W^2_q(\Omega \backslash \Sigma) ) ). 
\end{equation}
Regarding $B$ we note that $X_\gamma \into C^2(\Sigma)$, whence 
\begin{equation}
B \in C^1(V; \mathcal B( W^2_q (\Omega \backslash \Sigma) ; W^{1-1/q}_q(\Sigma) ) ). 
\end{equation}
This shows that $(A,F) \in C^1(V; \mathcal B(X_1;X_0) \times X_0)$.
\item This stems from the fact that $A_0 = A(0)$ and Lemma \ref{9286fg349840}.
\item Let $h \in D(A)$ satisfy $A(h)h = F(h)$. It then follows that $B(h)S(h)H_\Gamma(h) =0$ on $\Sigma$, that is,
\begin{equation}
\ljump n_\Sigma^h \cdot \nabla_h [S(h) H_\Gamma(h)] \rjump = 0, \quad \text{on } \Sigma.
\end{equation}
Note that $S(h)H_\Gamma(h)$ is the unique solution of an $h$-perturbed elliptic problem with homogeneous Neumann boundary conditions. Therefore $S(h)H_\Gamma(h)$ has to be constant. Since $S(h)H_\Gamma(h)$ equals $H_\Gamma(h)$ on $\Sigma$, also $H_\Gamma(h)$ is constant.
We then obtain that the mean curvature $H_\Gamma$ of the interface given as a graph of $h$ over $\Sigma$ is constant. Due to \eqref{293fgdhfgh8dfg623} we may even deduce using formula \eqref{9485674rthrthrthtrh} that $H_\Gamma = 0$. Then $h$ has to be constant.

\item This stems from Theorem \ref{029786802356805236}.
\item Clearly, every constant is an element of $N(A_0)$.  For the converse, let $h \in D(A_0)$, such that $A_0 h=0$. Hence $\chi = T\Delta_\Sigma h$ is constant and thererfore $\Delta_\Sigma h$ is constant. Since $h \in D(A_0)$, an integration by parts shows $\Delta_\Sigma h = 0$. Again since $h \in D(A_0)$, $h$ has to be constant.
\item We only need to show $N(A_0^2) \subset N(A_0)$. Pick some $h \in N(A_0^2)$. Then $A_0 h \in D(A_0) \cap N(A_0)$. Hence $A_0 h$ is constant. Also, $A_0 h$ is in the range of $A_0$. Since every element in the range of $A_0$ has mean value zero, it follows that $A_0 h = 0$, whence $h \in N(A_0)$.
\end{enumerate}
The proof is complete.
\end{proof}
The following theorem enables us to apply the generalized principle of linearized stability of Prüss, Simonett, and Zacher \cite{pruessmulsek} to the evolution equation \eqref{930476853486geudfgh}.
\begin{theorem}
Let $6 \leq p < \infty, q \in (5/3,2) \cap (2p/(p+1),2p)$. Then the trivial equilibrium $h_* = 0$ is normally stable.

More precisely:
\begin{enumerate}\itemsep-0.1em
\item Near $h_* = 0$ the set of equilibria $\mathcal E$ is a $C^1$-manifold in $X_1$ of dimension one.
\item The tangent space of $\mathcal E$ at $h_* = 0$ is given by the kernel of the linearization, $T_0 \mathcal E = N(A_0)$.
\item Zero is a semi-simple eigenvalue of $A_0$, i.e. $X_0 = N(A_0) \oplus R(A_0)$.
\item The spectrum $\sigma(A_0)$ satisfies $\sigma(A_0) \backslash \{ 0 \} \subset \mathbb C_+ := \{ z \in \mathbb C : \operatorname{Re} z > 0 \}$.
\end{enumerate}
\end{theorem}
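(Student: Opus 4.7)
I would tackle the four claims in the order (1), (2), (4), (3); they rest largely on the spectral structure already secured by Lemma~\ref{98376458037645083476}. For (1) and (2), Lemma~\ref{98376458037645083476}(4) identifies the equilibrium set near $h_* = 0$ as the constant functions on $\Sigma$. Hence $c \mapsto c\mathbf{1}_\Sigma$ is a smooth, injective global parametrization $\R \pfeil X_1$ of $\mathcal E$, exhibiting it as a one-dimensional $C^\infty$-submanifold of $X_1$ with $T_0\mathcal E = \mathrm{span}\{\mathbf{1}_\Sigma\}$; by Lemma~\ref{98376458037645083476}(6), the right-hand side coincides with $N(A_0)$.

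For (4), I would first establish that $A_0$ has compact resolvent: maximal $L_p$-regularity (Lemma~\ref{98376458037645083476}(5)) yields $\rho(A_0) \ne \emptyset$, while $D(A_0) \subset W^{4-1/q}_q(\Sigma)$ embeds compactly into $X_0 = W^{1-1/q}_q(\Sigma)$ by Rellich--Kondrachov on the compact surface $\Sigma$. Thus $\sigma(A_0)$ consists entirely of isolated eigenvalues of finite algebraic multiplicity. Let now $\lambda \in \sigma(A_0)$ with eigenfunction $h \in D(A_0)$ and set $\mu := T\Delta_\Sigma h$, so that $-\ljump n_\Sigma \cdot \nabla \mu \rjump = A_0 h = \lambda h$ on $\Sigma$. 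Since $q \in (5/3,2)$ and $\dim \Sigma \leq 2$, Sobolev embedding places $h$ in $C^2(\overline\Sigma) \cap H^2(\Sigma)$, so the $L^2$-style calculation below is justified. Pairing the eigenvalue equation with $\overline{\mu|_\Sigma} = \overline{\Delta_\Sigma h}$ on $\Sigma$ and integrating by parts on $\Sigma$ (using $n_{\partial\Sigma} \cdot \nabla_\Sigma h|_{\partial\Sigma} = 0$) and in $\Omega \setminus \Sigma$ (using $\Delta \mu = 0$ together with $n_{\partial\Omega} \cdot \nabla \mu|_{\partial\Omega} = 0$) produces the identity
\begin{equation*}
\lambda \int_\Sigma |\nabla_\Sigma h|^2 \, d\mathcal H^{n-1} \;=\; \int_\Omega |\nabla \mu|^2 \, dx \;\geq\; 0.
\end{equation*}
This forces $\lambda \in [0,\infty)$, and the case $\lambda = 0$ entails $\mu$ globally constant (by continuity of $\mu$ across $\Sigma$), hence $\Delta_\Sigma h \equiv \mathrm{const}$; integrating this constant over $\Sigma$ and invoking the Neumann condition on $\partial\Sigma$ forces $\Delta_\Sigma h \equiv 0$, so $h$ itself is constant, i.e. $h \in N(A_0)$. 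Consequently $\sigma(A_0) \setminus \{0\} \subset (0,\infty) \subset \mathbb C_+$.

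For (3), with $0$ now known to be an isolated point of $\sigma(A_0)$, the Riesz spectral projection $P_0$ onto $\{0\}$ is well-defined and its range is the generalized eigenspace $\bigcup_k N(A_0^k)$. Iterating Lemma~\ref{98376458037645083476}(7) yields $N(A_0^k) = N(A_0)$ for every $k \geq 1$, so $R(P_0) = N(A_0)$. The complementary subspace $(I - P_0)X_0$ is $A_0$-invariant, and the restriction of $A_0$ to it has spectrum $\sigma(A_0) \setminus \{0\}$, hence is invertible; this identifies $(I - P_0) X_0 = R(A_0)$ and produces the semi-simple decomposition $X_0 = N(A_0) \oplus R(A_0)$. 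The main obstacle is the energy identity in (4): it is naturally carried out in an $L^2$-scale while the eigenfunction a priori only lives in $W^{4-1/q}_q(\Sigma)$ with $q < 2$, but the Sobolev embedding noted above turns this into a formality. All the remaining ingredients (Rellich--Kondrachov, Riesz projections, spectral invariant subspaces) are standard.
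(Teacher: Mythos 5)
Your proposal is correct and follows essentially the same route as the paper: parts (1)--(2) are immediate from the characterization of $\mathcal E$ and $N(A_0)$ in Lemma~\ref{98376458037645083476}; part (4) is the same energy identity $\lambda\,\lvert\nabla_\Sigma h\rvert_{L_2(\Sigma)}^2 = \lvert\nabla\chi\rvert_{L_2(\Omega)}^2$ obtained by testing the eigenvalue equation against $\Delta_\Sigma h=\chi|_\Sigma$ and applying Green's formula in $\Omega\setminus\Sigma$ together with the Neumann conditions; and part (3) uses the same chain (compact resolvent $\Rightarrow$ $0$ isolated pole of finite algebraic multiplicity, then $N(A_0)=N(A_0^2)$ to identify the range of the Riesz projection), which the paper handles by citing the corresponding abstract result in Lunardi while you unfold the Riesz-projection argument explicitly. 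The only cosmetic difference is the ordering ((4) before (3)) and your explicit Sobolev-embedding remark justifying the $L_2$-pairing, which the paper compresses into a reference to $W^{1-1/q}_q(\Sigma)\hookrightarrow L_2(\Sigma)$; both are sound.
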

\begin{proof}
\begin{enumerate}\itemsep-0.1em
\item Around $h_*$, the set of equilibria only consists of constant functions, hence is a one-dimensional linear subspace of $X_1$.
\item This stems from Lemma \ref{98376458037645083476}.
\item Since $D(A_0)$ compactly embeds into $W^{1-1/q}(\Sigma)$, the operator $A_0$ has a compact resolvent and the spectrum $\sigma(A_0)$ only consists of eigenvalues, cf. \cite{engelnagel}. Furthermore, every spectral value in $\sigma(A_0)$ is a pole of finite algebraic multiplicity. By using $N(A_0) = N(A_0^2)$ and Proposition A.2.2 and Remark A.2.4 in \cite{lunardioptimal} we may conclude that the range of $A_0$ is closed in $X_0$ and that there is a spectral decomposition $X_0 = N(A_0) \oplus R(A_0)$. Hence $\lambda = 0$ is semi-simple.
\item Pick $\lambda \in \sigma(A_0)$ with corresponding eigenfunction $h \in D(A_0)$, in other words
\begin{equation} \label{923674583746874df}
\lambda h - A_0 h = 0, \quad \text{in } X_0.
\end{equation}
Testing with $h$ and integrating by parts using $W^{1-1/q}_q(\Sigma) \into L_2(\Sigma)$ yields that
\begin{equation}
0 = |\nabla \chi|_{L_2(\Omega)}^2 + (A_0 h | \Delta_\Sigma h)_{L_2(\Sigma)},
\end{equation}
where $\chi = T \Delta_\Sigma h$.
Testing again the resolvent equation \eqref{923674583746874df} now with $\Delta_\Sigma h$ finally yields
\begin{equation}
\lambda | \nabla_\Sigma h|_{L_2(\Sigma)}^2 = | \nabla \chi |_{L_2(\Omega)}^2.
\end{equation} 
This shows that $\lambda$ is real and $\lambda \geq 0$. In particular, $\sigma(A_0) \backslash \{ 0 \} \subset (0,\infty)$.
\end{enumerate}
Hence $h_*$ is normally stable.
\end{proof}
The following theorem is the main result on stability of solutions. It is an application of the generalized principle of linearized stability of Prüss, Simonett, and Zacher \cite{pruessmulsek} to the evolution equation \eqref{930476853486geudfgh}.

\begin{theorem} \label{thmgeome34954351} 
The trivial equilibrium $h_* = 0$ is stable in $X_\gamma$, and there is some $\delta > 0$ such that the evolution equation
\begin{equation}
\frac{d}{dt}h(t) + A(h(t))h(t) = 0, \quad h(0) = h_0,
\end{equation}
with initial value $h_0 \in X_\gamma$ satisfying $| h_0 - h_* |_{X_\gamma} \leq \delta$ has a unique global in-time solution on $\R_+$,
\begin{equation}
h \in W^1_p(\R_+ ; X_0)   \cap L_p(\R_+;D(A_0)) ,
\end{equation}
which converges at an exponential rate in $X_\gamma$ to some $h_\infty \in \mathcal E$ as $t \pfeil +\infty$.
\end{theorem}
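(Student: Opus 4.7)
The plan is to invoke the generalized principle of linearized stability of Prüss, Simonett and Zacher (cf.\ \cite{pruessmulsek}) applied to the quasilinear evolution equation \eqref{930476853486geudfgh} in $X_0 = W^{1-1/q}_q(\Sigma)$. This abstract principle produces exactly the statement of the theorem, namely global existence, stability, and exponential convergence to a (possibly different) equilibrium on $\mathcal{E}$, provided three sets of hypotheses are verified: $C^1$-dependence of the quasilinear coefficients in a neighborhood of the equilibrium, maximal $L_p$-regularity of the linearization at the equilibrium, and normal stability of the equilibrium.

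First I would collect the ingredients already supplied in the preceding lemmas. Lemma \ref{98376458037645083476}(2) gives $(A,F) \in C^1(V; \mathcal{B}(X_1;X_0) \times X_0)$ on a neighborhood $V$ of the origin in $X_\gamma$, which is the necessary smoothness of the coefficients. Lemma \ref{98376458037645083476}(5), itself a consequence of Theorem \ref{029786802356805236}, supplies maximal $L_p$-regularity of $A_0$ on $\R_+$. The normal stability of $h_* = 0$ has been established in the theorem immediately preceding the present one: the equilibrium set $\mathcal{E}$ is a one-dimensional $C^1$-manifold through $h_*$ whose tangent space coincides with $N(A_0)$; zero is a semi-simple eigenvalue yielding the direct sum decomposition $X_0 = N(A_0) \oplus R(A_0)$; and the remaining spectrum is separated from the imaginary axis with $\sigma(A_0)\setminus\{0\} \subset (0,\infty) \subset \mathbb{C}_+$. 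With these three sets of hypotheses in place, the abstract principle of linearized stability applies essentially verbatim and yields a $\delta>0$ with the desired properties, including uniqueness, global existence in the regularity class $W^1_p(\R_+;X_0) \cap L_p(\R_+;D(A_0))$, and exponential convergence in $X_\gamma$ to some $h_\infty \in \mathcal{E}$.

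To wrap up, I would reconcile the form of the equation in the theorem statement with the quasilinear formulation: the condition $\partial_\nu h_0 = 0$ is encoded in $D(A_0)$, so initial data close to $h_*=0$ sit in the right functional class, and the nonlinearity $F$ is automatically absorbed into the abstract framework because $h_*$ is an equilibrium and $F$ is $C^1$-small near $h_*$. The limit $h_\infty$ is identified through the spectral projection $P_c$ onto $N(A_0)$ (i.e.\ the constants) by following the trajectory's projection onto the equilibrium manifold.

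The principal conceptual obstacle has already been addressed in the preparatory work rather than in this final theorem itself: verifying normal stability required (a) the spectral argument testing the resolvent equation against $\Delta_\Sigma h$ to extract $\lambda|\nabla_\Sigma h|_{L_2}^2 = |\nabla \chi|_{L_2}^2 \geq 0$, (b) the identification of the kernel as the constants via the integration-by-parts computation together with the equality $N(A_0) = N(A_0^2)$, and (c) the geometric characterization of $\mathcal{E}$ as the flat interfaces, which in turn used the $90^\circ$ contact condition, formula \eqref{9485674rthrthrthtrh}, and the mean-value argument showing $\nabla h = 0$. Once these are granted, the application of the abstract theorem is essentially formal, and the only additional care needed is that the $\omega$-shift used in the linear theory does not affect the spectral picture near zero because it has been removed by exponential rescaling prior to the stability analysis.
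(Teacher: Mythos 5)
Your proposal is correct and mirrors the paper's own treatment: the paper states Theorem \ref{thmgeome34954351} as a direct application of the generalized principle of linearized stability of Pr\"uss, Simonett and Zacher to the abstract quasilinear equation \eqref{930476853486geudfgh}, with the required hypotheses — $C^1$-smoothness of $(A,F)$, maximal $L_p$-regularity of $A_0$, and normal stability of $h_*=0$ — already verified in Lemma \ref{98376458037645083476} and the immediately preceding theorem, exactly as you assemble them.
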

\begin{theorem}(Geometrical version) \label{thmgeome3495435}
Suppose that the initial surface $\Sigma_0$ is given as a graph, $\Sigma_0 = \{ (x,h_0(x)) : x \in \Sigma \}$ for some function $h_0 \in X_\gamma$. Then, for each $\varepsilon > 0$ there is some $\delta(\varepsilon) > 0$, such that if the initial value $h_0 \in X_\gamma$ satisfies $|h_0 - h_*|_{X_\gamma} \leq \delta(\varepsilon)$ for some constant function $h_*$, there exists a global-in-time strong solution $h$ on $\R_+$ of the evolution equation, precisely $h \in L_p(\R_+;D(A_0)) \cap W^1_p(\R_+;X_0))$, and it satisfies $|h(t)|_{X_\gamma} \leq \varepsilon$ for all $t \geq 0$.

Moreover, there is some constant $h_\infty$, such that $\Sigma_{h(t)} \pfeil \Sigma_{h_\infty}$ in the sense of $h(t) \pfeil h_\infty$ in $X_\gamma$ and the convergence is at an exponential rate.
\end{theorem}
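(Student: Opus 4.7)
The geometric version follows essentially as a corollary of Theorem \ref{thmgeome34954351} together with the definition of the moving surface $\Sigma_{h(t)} = \{X(p,h(p,t)) : p \in \Sigma\}$. The plan is, first, to produce the solution $h$ by direct appeal to Theorem \ref{thmgeome34954351}: for $h_0$ sufficiently close to a constant $h_*$ in $X_\gamma$, we obtain the unique global-in-time strong solution $h \in W^1_p(\R_+;X_0) \cap L_p(\R_+;D(A_0))$ together with exponential convergence in $X_\gamma$ to some $h_\infty \in \mathcal E$. For $h_* = 0$ this is verbatim the content of Theorem \ref{thmgeome34954351}. For a general constant $h_*$, I would rely on the observation that, in the container $\Omega = \Sigma \times (L_1,L_2)$, the curvilinear coordinate system reduces to a pure normal transport and the transformed evolution \eqref{930476853486geudfgh} is invariant under shifting $h$ by a constant. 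Hence the whole spectral and maximal regularity analysis of Lemma \ref{98376458037645083476} remains valid at $h_*$, and normal stability applies identically.

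For the $\varepsilon$--$\delta$ Lyapunov part (interpreting the conclusion as $|h(t)-h_*|_{X_\gamma} \leq \varepsilon$ after the shift), I would invoke directly the generalized principle of linearized stability of Prüss, Simonett and Zacher \cite{pruessmulsek}, which for a normally stable equilibrium delivers not only the exponential attraction towards the equilibrium manifold but also Lyapunov stability of $h_*$ in $X_\gamma$. Concretely, given $\varepsilon > 0$, one chooses $\delta(\varepsilon)$ smaller than both the admissibility threshold from Theorem \ref{thmgeome34954351} and the Lyapunov threshold corresponding to $\varepsilon$, so that the entire orbit $\{h(t) : t \geq 0\}$ remains in the $\varepsilon$-ball around $h_*$.

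Finally, to translate $X_\gamma$-convergence of $h(t)$ into geometric convergence of $\Sigma_{h(t)}$ to $\Sigma_{h_\infty}$, I would exploit the embedding $X_\gamma \into C^2(\Sigma)$ that holds by the choice of $p$ and $q$. Since the parametrization $p \mapsto X(p,h(p))$ is smooth in $h$ and Lipschitz from $C^2(\Sigma)$ into any natural topology on $C^2$-graphs over $\Sigma$, exponential convergence of $h(t)$ to $h_\infty$ in $X_\gamma$ transfers with the same rate to exponential convergence of the surfaces, e.g.\ in Hausdorff distance of $C^2$-graphs. The step I expect to be the only genuine nuisance is the translation invariance at a non-trivial constant $h_*$: this is structural rather than analytic, but it does require explicitly verifying that the formulas for $P(h)$ and $Q(h)$ in Lemma \ref{9286fg349840} and the solution operator $T$ are insensitive to a constant shift of the reference height, which is ultimately guaranteed by the product structure of the container.
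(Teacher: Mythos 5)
Your proposal is essentially the intended reading: the geometrical version is a reformulation of Theorem \ref{thmgeome34954351}, and the paper indeed states it without a separate proof, treating it as a corollary of the analytic stability result combined with the graph parametrization $\Sigma_{h(t)} = \{X(p,h(p,t)) : p \in \Sigma\}$ and the embedding $X_\gamma \hookrightarrow C^2(\Sigma)$. Your identification of the three ingredients — (i) global existence and exponential attraction from Theorem \ref{thmgeome34954351}, (ii) the Lyapunov $\varepsilon$--$\delta$ estimate as part of the normal-stability conclusion of Prüss--Simonett--Zacher's generalized principle of linearized stability, and (iii) the transfer of $X_\gamma$-convergence of $h$ to geometric convergence of the graphs via $X_\gamma \hookrightarrow C^2(\Sigma)$ — matches the paper's implicit argument.

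One small observation on the only point you flag as delicate, the reduction from a general constant $h_*$ to $h_* = 0$: in the container $\Omega = \Sigma \times (L_1,L_2)$ the curvilinear coordinates are pure normal transport, $X(p,w) = p + w\,e_n$, so the Hanzawa transform, the transformed curvature $K(h) = P(h)h + Q(h)$, and the two-phase solution operator all depend on $h$ only through the graph $\Gamma_h$ and commute with vertical translations (for $|h_*|$ small enough that the shifted interface stays inside $(L_1,L_2)$). This makes the translation invariance a genuine structural fact rather than something that needs a computation; equivalently, one can simply relabel the reference height so that the given constant $h_*$ is the new zero, which is how the paper implicitly reads the statement (the conclusion $|h(t)|_{X_\gamma} \le \varepsilon$ in Theorem \ref{thmgeome3495435} is then consistent, after this relabeling, with the literal $h_* = 0$ case). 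With that caveat addressed, your proof is complete and correct.
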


Note that by the following theorem we can characterize the limit. It is a priori not clear to which equilibrium the solution converges by the generalized principle of linearized stability.
\begin{theorem}
The limit $h_\infty$ from above has the same mean value as $h_0,$ in other words, $ \int_\Sigma h_0 dx /|\Sigma| =  h_\infty$ .
\end{theorem}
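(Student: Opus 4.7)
The strategy is to exploit the volume conservation \eqref{9348765037465087345} of the Mullins--Sekerka flow and translate it into a conservation law for the height function, which is then passed to the limit using the convergence $h(t) \to h_\infty$ in $X_\gamma$ provided by Theorem \ref{thmgeome3495435}.

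\textbf{Step 1: Reduce volume conservation to a linear functional of $h$.} In the container geometry $\Omega = \Sigma \times (L_1,L_2)$ the reference surface $\Sigma \subset \R^{n-1} \times \{0\}$ is flat and meets $\partial\Omega$ orthogonally along the lateral walls $\partial\Sigma \times (L_1,L_2)$. Hence the curvilinear coordinate system from \eqref{93467858937465} reduces to the normal translation $X(p,w) = p + w\,e_n$, and for $h \in \mathcal U$ with $h(\cdot,t) \in (L_1,L_2)$ one of the phases satisfies
\begin{equation*}
\Omega^+(t) = \{ (x',y) : x' \in \Sigma,\ L_1 < y < h(x',t) \}, \qquad |\Omega^+(t)| = \int_\Sigma h(x',t)\,dx' - L_1 |\Sigma|.
\end{equation*}

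\textbf{Step 2: Derive the conservation of the mean value.} From \eqref{9348765037465087345} applied to the strong solution constructed in Theorem \ref{thmgeome3495435}, we obtain $\tfrac{d}{dt}|\Omega^+(t)|=0$ for $t \in \R_+$, which by Step 1 yields
\begin{equation*}
\int_\Sigma h(x',t)\,dx' = \int_\Sigma h_0(x')\,dx' \qquad \text{for all } t \geq 0.
\end{equation*}
Alternatively, the same identity can be obtained directly from the transformed equation \eqref{456rthrth356} (the $\beta$-term vanishes in the container case): integrating $\partial_t h = -\ljump n_\Sigma \cdot \nabla_h \mu\rjump$ over $\Sigma$ and using $\Delta_h \mu = 0$ in $\Omega \setminus \Sigma$ together with the Neumann condition on $\partial\Omega$ gives $\tfrac{d}{dt}\int_\Sigma h\, dx' = 0$ via the divergence theorem in the $h$-perturbed geometry.

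\textbf{Step 3: Pass to the limit.} Theorem \ref{thmgeome3495435} furnishes a constant $h_\infty$ with $h(t) \to h_\infty$ in $X_\gamma$ as $t \to \infty$. The embedding $X_\gamma \hookrightarrow C^2(\Sigma) \hookrightarrow L^1(\Sigma)$ used throughout the paper then implies
\begin{equation*}
\int_\Sigma h(x',t)\,dx' \xrightarrow{t \to \infty} \int_\Sigma h_\infty\,dx' = |\Sigma|\, h_\infty,
\end{equation*}
since $h_\infty$ is a constant (being an equilibrium in the container case, cf.\ Lemma \ref{98376458037645083476}(iv)). Combining with Step 2 gives the claim $h_\infty = \tfrac{1}{|\Sigma|}\int_\Sigma h_0\,dx'$.

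The only non-routine point is verifying that the volume identity from Step 1 is actually differentiable in time along the strong solution; this is immediate here because $h \in W^1_p(\R_+;X_0)$ and $X_0 \hookrightarrow L^1(\Sigma)$, so $t \mapsto \int_\Sigma h(t)\,dx'$ is absolutely continuous with derivative zero almost everywhere, hence constant.
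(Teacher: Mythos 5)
Your proposal is correct and follows essentially the same route as the paper: use volume conservation of the Mullins--Sekerka flow to get that $\int_\Sigma h(t)\,dx$ is constant in time, then pass to the limit via the convergence $h(t)\to h_\infty$ in $X_\gamma \hookrightarrow L_1(\Sigma)$. You merely spell out the elementary steps (the explicit formula $|\Omega^+(t)| = \int_\Sigma h\,dx' - L_1|\Sigma|$ in the container geometry, and the absolute-continuity-in-time remark) that the paper leaves implicit.
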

\begin{proof}
The theorem is a consequence of the fact that the Mullins-Sekerka system conserves the measure of the domains separated by the interface in time.
Hence the solution $h$ from Theorem \ref{thmgeome34954351} satisfies
\begin{equation}
\frac{d}{dt}\int_\Sigma h(t,x) dx = 0.
\end{equation}
In particular, 
\begin{equation}
\int_\Sigma h(t,x)dx = \int_\Sigma h_0 (x) dx, \quad t \in \R_+.
\end{equation}
Since $h(t) \pfeil h_\infty$ as $t \pfeil \infty$ in $X_\gamma \into L_1(\Sigma)$, we get
the result.
\end{proof}
\begin{appendices}
\section{Auxiliary problems of elliptic type} \label{9348579348564305}
\subsection{Smooth domains.} Let $\Omega \subset \R^n $ be a bounded domain with smooth boundary $\partial\Omega$. Furthermore let $\Sigma$ be a smooth submanifold of $\R^n$ with boundary such that the interior $\mathring \Sigma$ is inside $\Omega$ and meets $\partial\Omega$ at a constant ninety degree angle.

In this chapter we are concerned with problems of elliptic type,
namely, 
\begin{subequations} \label{293876498765}
	\begin{align}
	(\eta-\Delta) u &= f, && \text{in } \Omega \backslash \Sigma, \\
	u|_\Sigma &= g_1, && \text{on } \Sigma, \\
	n_{\partial\Omega} \cdot \nabla u|_{\partial\Omega} &= g_2, && \text{on } \partial\Omega,
	\end{align}
\end{subequations}
where $\eta > 0$ is a fixed shift parameter, as well as the non-shifted version,
\begin{subequations} \label{873645836}
	\begin{align}
	-\Delta u &= f, && \text{in } \Omega \backslash \Sigma, \\
	u|_\Sigma &= g_1, && \text{on } \Sigma, \\
	n_{\partial\Omega} \cdot \nabla u|_{\partial\Omega} &= g_2, && \text{on } \partial\Omega.
	\end{align}
\end{subequations}
We will show optimal solvability of this problem via a localization method. To this end we consider first the model problem of \eqref{293876498765} on $\R^n_+$ with flat interface $ \{ x_n > 0, x_1 = 0 \}$. 
\begin{theorem}
	Let $\eta > 0, q \in (3/2,2)$ and $\Sigma := \{ x_n > 0, x_1 = 0 \}$. Then, for every $f \in L_q(\R^n _+)$, $g_1 \in W^{2-1/q}_q(\Sigma)$ and $g_2 \in W^{1-1/q}_q(\partial \R_+)$ there exists a unique solution $u \in W^2_q(\R^n_+ \backslash \Sigma)$ of \eqref{293876498765} with $\R^n _+$ replacing $\Omega$.

	 Furthermore, there is some $C(\eta) >0$ and some $K>0$ independent of $\eta$, such that
	 \begin{align}
	 |u&|_{L_q(\R^n_+)} + \eta^{-1/2}|Du|_{L_q(\R^n_+ \backslash\Sigma)} + \eta^{-1}|D^2 u |_{L_q (\R^n_+ \backslash \Sigma)} \\
	 &\leq K \eta^{-1} |f|_{L_q(\R  ^n _+)} + C(\eta)|g_1|_{W^{2-1/q}_q(\Sigma)} + K \eta^{ -1/(2q) - 1/2} |g_2|_{W^{1-1/q}_q(\partial\R^n _+)}. \nonumber
	 \end{align}
\end{theorem}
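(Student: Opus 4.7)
My plan is to reduce the problem to two independent half-space Dirichlet problems for $\eta-\Delta$ via an extension--reflection argument, and then obtain the weighted estimate by a scaling argument.

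First I would remove the Neumann datum. Standard trace theory on $W^2_q(\R^n_+)$ produces some $u_1 \in W^2_q(\R^n_+)$ with $\partial_{x_n} u_1|_{\partial\R^n_+} = g_2$ and the corresponding norm bound. Setting $v := u - u_1$, we obtain the same system for $v$, now with homogeneous Neumann condition, modified source $\tilde f := f - (\eta - \Delta) u_1 \in L_q(\R^n_+)$, and modified Dirichlet datum $\tilde g_1 := g_1 - u_1|_\Sigma \in W^{2-1/q}_q(\Sigma)$, the latter by trace theory.

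Second, because $\Sigma$ meets $\partial\R^n_+$ perpendicularly and the Neumann condition for $v$ is now homogeneous, I would apply the even reflection $R$ in the $x_n$-direction. Direct checking of difference quotients (or the argument behind Theorem \ref{456745645} together with its analogue for $W^2_q$ when the Neumann trace vanishes) shows that $Rv \in W^2_q(\R^n \setminus \tilde\Sigma)$, where $\tilde\Sigma := \{x_1 = 0\}$. The reflection of the boundary datum on the interface lies in $W^{2-1/q}_q(\tilde\Sigma)$ by Theorem \ref{456745645}, since $2-1/q = 1 + (1-1/q)$ and the exponent $\alpha := 1-1/q$ satisfies $\alpha < 1/q$ thanks to $q<2$; the reflected source is trivially in $L_q(\R^n)$. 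The reflected system is a two-phase elliptic problem on $\R^n$ with flat interface $\tilde\Sigma$ and only a Dirichlet condition on it, which therefore decouples into two independent half-space Dirichlet problems for $\eta-\Delta$ on $\{x_1>0\}$ and $\{x_1<0\}$. Each is classical and admits a unique $W^2_q$ solution, for instance via an explicit Fourier-multiplier representation in the tangential variables. Undoing the reflection and adding back $u_1$ gives the unique $u$, with uniqueness inherited from the uniqueness on each half-space.

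Finally, for the $\eta$-weighted estimate I would use the dilation $y = \sqrt{\eta}\,x$, which converts $(\eta - \Delta_x)u = f$ into $(1 - \Delta_y)\tilde u = \eta^{-1}\tilde f$ and thus reduces matters to the case $\eta = 1$. Tracking how the $L_q$-norms of $u$, $Du$, $D^2 u$, $f$, and $g_2$ scale under this dilation yields the uniform prefactors $K\eta^{-1}$ for $f$ and $K\eta^{-1/(2q)-1/2}$ for $g_2$. The main obstacle is the asymmetric behaviour of $g_1$: its $W^{2-1/q}_q(\Sigma)$ norm is a mixture of an $L_q$-part and a fractional seminorm part that scale with different powers of $\eta$, and the solution associated to a pure Dirichlet boundary datum does not decay in $\eta$ along $\Sigma$, so the best uniform-in-$\eta$ bound one can extract from this scaling argument is a constant $C(\eta)$ depending on $\eta$, in contrast to the clean negative powers of $\eta$ available for the interior and Neumann contributions.
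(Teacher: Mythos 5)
Your proposal matches the paper's argument in all essential respects: reduce to a homogeneous Neumann condition by an auxiliary problem, reflect evenly across $\partial\R^n_+$ using $q<2$ (so that $\alpha = 1-1/q < 1/q$ makes Theorem \ref{456745645} applicable to the Dirichlet datum), solve the resulting whole-space two-phase Dirichlet problem via Fourier multipliers in the tangential variables, and obtain the $\eta$-weighted bounds by the dilation $y = \sqrt{\eta}\,x$. The only cosmetic difference is that the paper first peels off the source term $f$ as well and then writes the explicit Poisson-type formula $v = e^{-(\eta-\Delta_{x'})^{1/2}|x_1|} Rg_1$, whereas you keep $\tilde f$ and invoke the two decoupled half-space Dirichlet problems directly, which is an equivalent route.
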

\begin{proof}
	We first solve an auxiliary upper half space problem to reduce the problem to
	\begin{subequations} \label{2s93876498765}
		\begin{align}
		(\eta-\Delta) u &= 0, && \text{in } \R^n _+ \backslash \Sigma, \\
		u|_\Sigma &= g_1, && \text{on } \Sigma, \\
		\partial_n u|_{\partial\R^n _+} &= 0, && \text{on } \partial\R^n _+,
		\end{align}
	\end{subequations}
	for possibly modified $g_1$ not to be relabeled. Since $\partial_n u = 0$ on the boundary, we may reflect the problem via an even reflection to obtain an elliptic problem on $\dot \R \times \R^{n-1}$. By Theorem \ref{456745645} using $q < 2$ we obtain that $Rg_1 \in W^{2-1/q}_q(\tilde{\Sigma})$, where $\tilde \Sigma := \{ x_1 = 0\}.$ Here, $R$ denotes the aforementioned even reflection in $x_n$-direction. The problem we are left to solve is now
	\begin{subequations} \label{dfhfk48345}
	\begin{align}
	(\eta-\Delta)v &= 0, && \text{in } \R^n \backslash \tilde \Sigma, \\
	v|_{\tilde \Sigma} &= Rg_1, && \text{on } \tilde \Sigma.
	\end{align}
	\end{subequations}
 Let $x' := (x_2,...,x_n)$.	It is now well known that the operator $(\eta-\Delta_{x'})^{1/2}$ with domain $W^1_q(\R^{n-1})$ has maximal regularity on the half line $\R_+$ with respect to the base space $L_q(\R^{n-1})$ and the induced semigroup is analytic. Note that by real interpolation method,
	\begin{equation}
	\left( D((\eta-\Delta_{x'})^{1/2}) , L_q(\R^{n-1}) \right)_{1-1/q,q} = W^{1-1/q}_q(\R^{n-1}),
	\end{equation}
	whence we may solve \eqref{dfhfk48345} by
	\begin{equation} \label{9280634097236}
	v(x,x') =  e^{-(\eta-\Delta_{x'})^{1/2}|x_1|}Rg_1(x'), \quad x_1 \in \R, x' \in \R^{n-1}.
	\end{equation}
	We obtain
	\begin{equation}
	| v|_{\R^n_+} |_{W^1_q (\R^n_+)} \leq C |g_1|_{W^{1-1/q}_q(\Sigma)}.
	\end{equation}
	To obtain the dependence of the shift parameter one proceeds by a scaling argument as in Section \ref{section34345439}. The proof is complete.
	\end{proof}
	By a standard localization argument we can now show that the shifted problem is solvable in the case of a bounded, smooth domain.
	\begin{theorem}
		Let $q \in (3/2,2)$, $\Omega \subset \R^n$ a bounded, smooth domain and $\Sigma$ a smooth surface inside $\Omega$ intersecting the boundary $\partial \Omega$ at a nintey degree angle. Then there is some $\eta_0 \geq 0$, such that if $\eta \geq \eta_0$, for every $(f,g_1,g_2) \in L_q(\Omega) \times W^{2-1/q}_q(\Sigma) \times W^{1-1/q}_q(\partial\Omega)$ there is unique $u \in W^2_q(\Omega \backslash \Sigma)$ solving \eqref{293876498765}. Furthermore, the solution map $(f,g_1,g_2) \mapsto u$ is continuous between the above spaces.
	\end{theorem}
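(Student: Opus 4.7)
The plan is to follow the localization scheme already used throughout the paper, transferring the half-space result just established to the bent, curved geometry. First I would fix a finite atlas $\{U_j\}_{j=0}^N$ of $\Omega$ together with a subordinate smooth partition of unity $(\phi_j)$ and cut-offs $(\psi_j)$ with $\psi_j\equiv1$ on $\operatorname{supp}\phi_j$, chosen so that each $U_j$ falls into exactly one of four geometric categories: (i) $U_j$ is compactly contained in $\Omega\setminus\Sigma$; (ii) $U_j$ meets $\Sigma$ but stays away from $\partial\Omega$; (iii) $U_j$ meets $\partial\Omega$ but stays away from $\Sigma$; (iv) $U_j$ is a neighborhood of a point of $\partial\Sigma\subset\partial\Omega$. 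Up to rotation and a local diffeomorphism, each of these four situations corresponds to a model problem on $\R^n$, on $\R^n$ with flat interface $\{x_1=0\}$, on the half space $\R^n_+$ with Neumann condition, or on $\R^n_+$ with flat perpendicular interface --- the last of which is exactly the model problem handled by the previous theorem. The orthogonality of $\Sigma$ and $\partial\Omega$ at $\partial\Sigma$ is crucial: it is preserved (up to small $C^1$-perturbation) when we introduce the curvilinear coordinates, so that the localized geometry in category (iv) becomes a bent version of the flat contact-line model.

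Next, I would solve each local problem on its bent model domain by the same transformation-plus-perturbation argument used earlier for the bent half space: pulling back via a diffeomorphism close to the identity gives the flat model problem plus a perturbation operator $\mathcal B_j$ whose norm is controlled by $|\beta_j|_{C^1}+|\gamma_j|_{C^1}$, which may be made arbitrarily small by shrinking the charts. Equipped with the $\eta$-weighted norms analogous to those of Section \ref{section34345439} (now with powers of $\eta^{1/2}$ rather than $\omega$), the model estimate furnishes, for $\eta$ sufficiently large, an invertible local operator $L^j$ whose inverse is uniformly bounded in $\eta\geq\eta_0$. A parametrix $S:=\sum_{j=0}^N\psi_j(L^j)^{-1}\phi_j$ is then constructed, and applying the global operator $L$ of \eqref{293876498765} to $Sz$ produces $z$ plus error terms that are either lower order (commutators $[L^j,\psi_j]$ acting on $(L^j)^{-1}\phi_j z$) or stem from the perturbation operators $\mathcal B_j$. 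These error terms enjoy negative powers of $\eta$ in the weighted norms, so a Neumann series argument with $\eta$ large enough inverts $I+$ error and provides the right inverse; the analogous argument on the left gives injectivity and a left inverse, whence the solution operator exists and is continuous.

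The step I expect to be the main technical obstacle is the contact-line chart (iv): one must verify that the pull-back under curvilinear coordinates of the orthogonal intersection $\Sigma\perp\partial\Omega$ produces a bent pair $(\Sigma_{\beta_j},\R^n_{\gamma_j})$ whose perturbation of the flat model is controlled in the $\eta$-weighted norms with the same $(\eta^{-1/q}+\eta^{-1/2})$-type factors that were obtained in \eqref{87364537465gfsd} for the parabolic problem. This requires bookkeeping of how derivatives of the data ($f$, $g_1$, $g_2$) pick up powers of $\eta$ and how the $q<2$ restriction is used to avoid spurious compatibility conditions at $\partial\Sigma$ (so that the even-reflection step inside the previous theorem can be applied chartwise). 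Once this is in place, the remaining three categories are standard shifted elliptic results, and the patching argument goes through verbatim as in the proof of the preceding localization theorem for \eqref{fjnjj43574fgdfg3}.
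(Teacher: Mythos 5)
The paper states this theorem without a written proof, remarking only that it follows ``by a standard localization argument,'' and your proposal spells out exactly that argument as the elliptic analogue of the localization proof given for the parabolic problem \eqref{fjnjj43574fgdfg3}, with smallness of the perturbation errors obtained from taking $\eta$ large rather than $T$ small. Your four-category classification of charts, the identification of the perpendicular-interface half-space as the only nonstandard model problem (handled in the preceding theorem), the parametrix construction $S=\sum_j \psi_j(L^j)^{-1}\phi_j$, and the Neumann-series inversion all match the scheme the paper uses in its parabolic localization argument, so I see no gap.
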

	We will now concern solvability of the non-shifted problem \eqref{873645836}.
	\begin{theorem} \label{3948srthth6543}
		Let $q \in (3/2,2)$. For every $(f,g_1,g_2) \in L_q(\Omega) \times W^{2-1/q}_q(\Sigma) \times W^{1-1/q}_q(\partial\Omega)$ there is unique $u \in W^2_q(\Omega \backslash \Sigma)$ solving \eqref{873645836}. Furthermore, there is some constant $C>0$, such that
		\begin{equation}
		|u|_{W^2_q(\Omega \backslash \Sigma)} \leq C \left( |f|_{L_q(\Omega)} + |g_1|_{W^{2-1/q}_q(\Sigma)} + |g_2|_{W^{1-1/q}_q(\partial\Omega)} \right).
		\end{equation}
	\end{theorem}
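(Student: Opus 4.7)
The plan is to reduce the non-shifted problem \eqref{873645836} to the shifted one \eqref{293876498765} via a Fredholm perturbation argument, using the solvability result proved immediately before for $\eta \geq \eta_0$. Introduce the bounded operators
\begin{equation*}
L_0 u := (-\Delta u,\; u|_\Sigma,\; n_{\partial\Omega}\cdot\nabla u|_{\partial\Omega}),
\quad
L_\eta u := ((\eta-\Delta) u,\; u|_\Sigma,\; n_{\partial\Omega}\cdot\nabla u|_{\partial\Omega}),
\end{equation*}
viewed as maps $W^2_q(\Omega\setminus\Sigma) \to Y := L_q(\Omega)\times W^{2-1/q}_q(\Sigma)\times W^{1-1/q}_q(\partial\Omega)$. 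By the preceding theorem, $L_\eta$ is a topological isomorphism for $\eta \geq \eta_0$. The difference $(L_\eta - L_0) u = (\eta u,0,0)$ factors through the embedding $W^2_q(\Omega\setminus\Sigma) \hookrightarrow L_q(\Omega)$, which is compact by Rellich--Kondrachov on each smooth subdomain $\Omega^\pm$. Hence $L_0 = L_\eta - (L_\eta - L_0)$ is a compact perturbation of an isomorphism, so $L_0$ is Fredholm of index zero on $W^2_q(\Omega\setminus\Sigma) \to Y$.

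It thus suffices to verify $\ker L_0 = \{0\}$; the Fredholm alternative then upgrades $L_0$ to an isomorphism, and the open mapping theorem delivers the claimed estimate. Assume $u \in W^2_q(\Omega\setminus\Sigma)$ with $L_0 u = 0$. Since $n\in\{2,3\}$ and $q > 3/2$, Sobolev embedding yields $W^2_q(\Omega^\pm)\hookrightarrow W^1_2(\Omega^\pm)\cap C(\overline{\Omega^\pm})$, so the integration by parts
\begin{equation*}
0 \;=\; \int_{\Omega^\pm} u\,(-\Delta u)\, dx \;=\; \int_{\Omega^\pm} |\nabla u|^2\, dx \;-\; \int_{\partial \Omega^\pm} u\, (n\cdot\nabla u)\, d\mathcal H^{n-1}
\end{equation*}
is legitimate on each phase. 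The boundary $\partial\Omega^\pm$ decomposes into a piece of $\Sigma$, where $u|_\Sigma = 0$, and a piece of $\partial\Omega$, where $n_{\partial\Omega}\cdot\nabla u = 0$; hence the boundary integral vanishes. It follows that $\nabla u \equiv 0$ on each connected component of $\Omega^\pm$, so $u$ is piecewise constant, and the trace condition $u|_\Sigma = 0$ forces $u \equiv 0$.

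The only potential obstacle is that $\Sigma$ meets $\partial\Omega$ transversally at the corner $\partial\Sigma$, where one might worry about missing compatibility conditions or trace issues. However, the restriction $q < 2$ is exactly what ensures that no such compatibility is imposed on the data $(g_1,g_2)$ along $\partial\Sigma$ (cf.\ Section \ref{section34345439}), so $L_0$ really does act between the stated spaces, and the integration by parts is carried out on each smooth phase $\Omega^\pm$ separately, where the ninety degree contact of $\Sigma$ with $\partial\Omega$ yields a Lipschitz boundary and poses no further difficulty.
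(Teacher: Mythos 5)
Your proof is correct, and it takes a genuinely different (though closely related) route from the paper. The paper first solves the shifted problem with the given data to obtain $v$, reduces to a homogeneous boundary value problem $-\Delta w = \eta v$ with $w|_\Sigma = 0$, $n_{\partial\Omega}\cdot\nabla w|_{\partial\Omega}=0$, and then works with the closed operator $A = -\Delta$ on $L_q(\Omega)$ with domain $D(A) = \{w \in W^2_q(\Omega\setminus\Sigma) : w|_\Sigma = 0, n_{\partial\Omega}\cdot\nabla w|_{\partial\Omega}=0\}$: since $D(A)\hookrightarrow L_q(\Omega)$ compactly, $A$ has compact resolvent, the spectrum consists only of eigenvalues of finite multiplicity, and one checks $0\notin\sigma(A)$ by the integration-by-parts argument (working in $L_2$ by invoking $q$-independence of the spectrum). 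You instead treat the full boundary value operator $L_0: W^2_q(\Omega\setminus\Sigma) \to Y$ directly, observe that it is a compact perturbation of the isomorphism $L_\eta$, hence Fredholm of index zero, and then the same integration-by-parts argument gives $\ker L_0 = 0$, which for index zero is already bijectivity; you justify the $L_2$-pairing via the Sobolev embedding $W^2_q(\Omega^\pm)\hookrightarrow W^1_2(\Omega^\pm)\cap C(\overline{\Omega^\pm})$ rather than by a spectral $q$-independence argument. Your approach buys a more streamlined logical structure (no auxiliary shifted solve, no intermediate spectral theory, and the a priori estimate drops out of the open mapping theorem), while the paper's formulation via the operator $A$ and the resolvent identity $T_0 g - T_\eta g = \eta(\eta-\Delta_N)^{-1}T_0 g$ is geared towards the use this relation gets later in the paper. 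Both arguments are valid; one minor wording point is that the subdomains $\Omega^\pm$ are only Lipschitz near the contact line $\partial\Sigma$ (not smooth), but as you note this is harmless for both Rellich--Kondrachov and the divergence theorem.
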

	\begin{proof}
		First we choose $\eta > 0$ large enough and solve \eqref{293876498765} by a function $v \in W^2_q(\Omega \backslash \Sigma)$. It therefore remains to solve
		\begin{subequations} \label{873ddd645836}
			\begin{align}
			-\Delta w &= -\eta v, && \text{in } \Omega \backslash \Sigma, \\
			u|_\Sigma &= 0, && \text{on } \Sigma, \\
			n_{\partial\Omega} \cdot \nabla u|_{\partial\Omega} &= 0, && \text{on } \partial\Omega,
			\end{align}
		\end{subequations}
		since then $u := v+w$ solves \eqref{873645836}. To this end define $A$ to be the negative Laplacian $-\Delta$ in $L_q(\Omega)$ with domain
		\begin{equation}
		D(A) := \{  w \in W^2_q(\Omega \backslash \Sigma) : w|_\Sigma = 0, \; n_{\partial\Omega} \cdot \nabla w|_{\partial\Omega} = 0   \}.
		\end{equation}
		Since $D(A)$ compactly embeds into $L_q(\Omega)$ by Sobolev embeddings, $A$ has compact resolvent and the spectrum $\sigma(A)$ only consists of eigenvalues of $A$ with finite multiplicity. We will show that zero is not a possible eigenvalue, hence $A$ is invertible. 
		
		Suppose $u \not = 0$ is a nontrivial eigenfunction to the eigenvalue $\lambda$. Since by well-known results the spectrum is independent of $q$, we may let $q=2$, cf. \cite{arendt}. Testing the resolvent equation with $u $ in $L_2(\Omega)$ and invoking the boundary condition yields
		\begin{equation}
		-\lambda |u|_{L_2(\Omega)}^2 = \int_{\Omega}^{} u \Delta u dx = - | \nabla u |^2_{L_2(\Omega)}.
		\end{equation}
		Whence if $\lambda = 0$, then $u \in D(A)$ has to be a constant function, hence zero since $u$ vanishes on $\Sigma$. This is a contradiction, hence $\lambda = 0 $ is not a possible eigenvalue. Therefore we may uniquely solve \eqref{873ddd645836} and the proof is complete.
			\end{proof}
			We conclude this section by the following observation, cf. \cite{pruessbuch}. Consider the special case where $(f,g_1,g_2) = (0,g,0)$. Define solution operators as follows. Let $T_0 g$ be the solution of the non-shifted problem \eqref{873645836} for $(f,g_1,g_2) = (0,g,0)$ and, for $\eta \geq \eta_0$, $T_\eta g$ the solution of \eqref{293876498765} with $(f,g_1,g_2) = (0,g,0)$. Then, $T_0 g - T_\eta g = \eta(\eta-\Delta_N)^{-1} T_0 g$. 
			Hereby, $z := (\eta- \Delta_N)^{-1} f$ solves the two-phase problem
			\begin{align}
			( \eta- \Delta)^{-1} z &= f, && \text{in } \Omega \backslash \Sigma, \\
			z|_\Sigma &= 0, && \text{on } \Sigma, \\
			(n_{\partial\Omega} | \nabla z) &= 0, && \text{on } \partial\Omega.
			\end{align}
				For details we refer to section 6.6 in \cite{pruessbuch}.
			
			\subsection{Cylindrical domains.} \label{384075647564574695} In the case where $n=3$ and $\Omega \subset \R^3$ is a bounded container, one needs a result for the elliptic model problem in the case where the top of the container meets the walls at a ninety degree angle. So let $G := \R_+ \times \R \times \R_+.$
			\begin{subequations} \label{93046875734rg}
				\begin{align}
				(\eta-\Delta) u &= f, && \text{in } G, \\
				\partial_1 u &= g_1, && \text{on } S_1 := \{ x_1 = 0, x_2 \in \R, x_3 \in \R_+\}, \\
				\partial_3 u &= g_2, &&\text{on } S_2 := \{ x_1 \in \R_+, x_2 \in \R, x_3 =0\}.
				\end{align}
			\end{subequations}
			The key observation is now that the two Neumann boundary conditions on $S_1$ and $S_2$ are compatible whenever $q < 2$. Suppose that we want to find a solution $u \in W^2_q(G)$ of the problem. Then by trace theory,
			\begin{equation}
			\nabla u |_{S_j} \in W^{1-1/q}_q(S_j), \quad j = 1,2.
			\end{equation}
			This yields necessary conditions for the data.
		 We see that on the set $\partial S_1 \cap \partial S_2 = \{ x_1 = x_3 = 0\}$ where the two boundary conditions meet, there is no compatibility condition for the data $g_1$ and $g_2$ in the system. This is due 
		 to the fact that since $q < 2$ the functions
		 $\nabla u|_{S_j}$ do not have a trace on $\partial S_j$. 
		  
		So let the given data satisfy 
		 \begin{equation}
		 g_j \in W^{1-1/q}_q(S_j),\quad  j=1,2.
		 \end{equation}
		 By a simple reflection we can recude the problem to a upper half-space problem with one Neumann condition and obtain full $W^2_q(G)$ regularity for the solution.
		 Let us state this observation in the following theorem.
		 \begin{theorem}
		 For all $(g_1,g_2) \in W^{1-1/q}_q(S_1) \times W^{1-1/q}_q(S_2)$ there exists a unique solution $u \in W^2_q(G)$ to problem \eqref{93046875734rg}. Furthermore, the solution map $[(g_1,g_2) \mapsto u]$ is continuous.
		 \end{theorem}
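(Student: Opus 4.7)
The plan is to reduce the problem to a standard half-space Neumann problem via an even reflection in the $x_3$-direction, exploiting the hypothesis $q<2$ to avoid spurious compatibility conditions at the edge $\partial S_1 \cap \partial S_2 = \{x_1 = x_3 = 0\}$.

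First, I would reduce to the case $g_2 = 0$. Using standard trace theory on $G$, I find $v \in W^2_q(G)$ with $\partial_3 v|_{S_2} = g_2$ and $|v|_{W^2_q(G)} \lesssim |g_2|_{W^{1-1/q}_q(S_2)}$. Setting $\tilde u := u - v$, we obtain the modified data $\tilde f := f - (\eta-\Delta)v \in L_q(G)$, $\tilde g_1 := g_1 - \partial_1 v|_{S_1} \in W^{1-1/q}_q(S_1)$, and $\tilde g_2 = 0$. Crucially, for $q<2$ neither $g_1$ nor $\partial_1 v|_{S_1}$ admits a trace on the codimension-two edge $\{x_1=x_3=0\}$, so $\tilde g_1$ inherits no spurious edge compatibility from the reduction.

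Second, I would apply the even reflection $R_3$ in the $x_3$-direction to $\tilde u$. Since the Neumann trace $\partial_3 \tilde u|_{S_2} = 0$, an argument parallel to Theorems~\ref{456745645} and the subsequent reflection result (invoking $q<2$ so that no trace on $\{x_3=0\}$ of $\partial_3 \tilde u$ imposes a constraint) yields $R_3 \tilde u \in W^2_q(\R_+ \times \R \times \R)$. The reflected function then solves
\begin{equation*}
(\eta - \Delta)(R_3 \tilde u) = R_3 \tilde f \quad \text{on } \R_+ \times \R \times \R, \qquad \partial_1(R_3 \tilde u)|_{x_1=0} = R_3 \tilde g_1,
\end{equation*}
and by Theorem~\ref{456745645} applied in the $x_3$-variable, $R_3 \tilde g_1 \in W^{1-1/q}_q(\{x_1=0\})$.

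Third, this is a standard shifted Neumann problem for the Laplacian on the half-space $\{x_1 > 0\}$, for which unique solvability in $W^2_q$ together with continuous dependence of the data is classical. Reversing the reduction recovers the unique $u \in W^2_q(G)$ solving \eqref{93046875734rg} and yields the continuity of the solution map.

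The principal obstacle, and the reason the theorem holds exactly as stated, lies entirely in the use of $q<2$ at two points: in the first reduction step one must know that subtracting off $\partial_1 v|_{S_1}$ does not produce an inadmissible boundary datum on $S_1$, and in the reflection step one must know that the even extension of $\tilde g_1$ lands in $W^{1-1/q}_q$ of the full hyperplane $\{x_1=0\}$. Both rely on the absence of a trace of $W^{1-1/q}_q$-functions on the codimension-two edge $\{x_1=x_3=0\}$, which holds precisely when $q<2$; for $q \geq 2$ either step would require an additional matching condition on the data that is not imposed in the hypothesis.
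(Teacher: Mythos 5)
Your proof is correct and follows the same approach the paper sketches: reduce to a vanishing Neumann datum on one face, reflect evenly across that face, and solve the resulting standard Neumann half-space problem. The paper only states ``by a simple reflection we can reduce the problem to an upper half-space problem with one Neumann condition,'' so the reduction step and the argument that the even extension of the solution is $W^2_q$ (using $\partial_3 \tilde u|_{S_2}=0$) that you supply are exactly the missing details.

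One small imprecision worth fixing: your third step invokes Theorem~\ref{456745645} to conclude $R_3\tilde g_1 \in W^{1-1/q}_q(\{x_1=0\})$, but that theorem concerns even reflection of $W^{1+\alpha}_q$ with $\alpha\in[0,1/q)$, i.e.\ orders in $[1,1+1/q)$, whereas here the order is $1-1/q<1$. The correct justification is the interpolation underlying the proof of Theorem~\ref{456745645}: the even reflection is bounded $L_q(\R\times\R_+)\to L_q(\R^2)$ and $W^1_q(\R\times\R_+)\to W^1_q(\R^2)$, hence bounded on $W^s_q$ for all $s\in(0,1)$ by real interpolation --- in particular for $s=1-1/q$. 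Also, the parenthetical ``no trace on $\{x_3=0\}$ of $\partial_3\tilde u$'' is misleading: $\partial_3\tilde u\in W^1_q(G)$ always has a trace on $S_2$; what matters is that this trace \emph{vanishes} after the reduction, which is what makes the even reflection of $\tilde u$ land in $W^2_q$. The genuine role of $q<2$ is what you state at the end: $W^{1-1/q}_q$-functions on the two-dimensional faces $S_j$ have no trace on the one-dimensional edge (this requires $1-1/q<1/q$, i.e.\ $q<2$), so the hypotheses impose no hidden edge compatibility between $g_1$ and $g_2$.
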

\section{The Neumann trace of the height function}
In this section we characterize the optimal trace space for the Neumann trace of the height function $h$ and show that it is a Banach algebra with respect to pointwise multiplication.
\begin{theorem} \label{8374650783645}
Let $n=2,3$, $0 < T \leq \infty, 5 \leq p < \infty$ and $q \in (5/3,2) \cap (2p/(p+1), 2p)$ and let $\Sigma$ be the flat interface $ \rnplus \cap \{ x_1 = 0 \}$. Let again $X_0 := W^{1-1/q}_q(\Sigma)$ and $X_1 := W^{4-1/q}_q(\Sigma)$. Then
\begin{align} \label{neumanntrace}
\prescript{}{0} W^1_p(0,&T;X_0) \cap L_p(0,T;X_1) \ni h \mapsto \\ &\mapsto \nabla h|_{\partial\Sigma} \in \prescript{}{0} F^{1 - 2/(3q)}_{pq}(0,T;L_q(\partial\Sigma)) \cap L_p(0,T;B^{ 3 -2/q}_{qq}(\partial\Sigma))
\end{align}
is bounded, linear, and has a continuous right inverse $E$, such that $\nabla Eg|_{\partial\Sigma} = g$ for all $g \in {_0 F}^{1 - 2/(3q)}_{pq}(0,T;L_q(\partial\Sigma)) \cap L_p(0,T;B^{ 3 -2/q}_{qq}(\partial\Sigma))  $.

In particular, there exists some constant $C>0$ independent of the length of the time interval $T$, such that
\begin{equation}
| \nabla h|_{\partial\Sigma} |_{ F^{1 - 2/(3q)}_{pq}(0,T;L_q(\partial\Sigma)) \cap L_p(0,T;B^{ 3 -2/q}_{qq}(\partial\Sigma))} \leq C |h|_{W^1_p(0,T;X_0) \cap L_p(0,T;X_1) },
\end{equation}
for all $h \in {_0 W}^1_p(0,T;X_0) \cap L_p(0,T;X_1) $ and
\begin{equation}
| E g|_{W^1_p(0,T;X_0) \cap L_p(0,T;X_1)} \leq C |g|_{ F^{1 - \frac{2}{3q}}_{pq}(0,T;L_q(\partial\Sigma)) \cap L_p(0,T;B^{ 3 -\frac{2}{q}}_{qq}(\partial\Sigma))}
\end{equation}
for all $g \in {_0 F}^{1 - 2/(3q)}_{pq}(0,T;L^q(\partial\Sigma)) \cap L_p(0,T;B^{ 3 -2/q}_{qq}(\partial\Sigma))$.
\end{theorem}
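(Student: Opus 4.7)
The plan is to interpret $\mathbb E := {}_0W^1_p(0,T;X_0) \cap L_p(0,T;X_1)$ as an anisotropic mixed-norm Sobolev space with parabolic anisotropy $1:3$ (time versus space), apply the anisotropic trace theory of Meyries-Veraar in both directions, and exploit the vanishing initial trace to obtain a $T$-uniform constant. First I would extend $h$ by zero to $\R_+$, which is bounded on ${}_0W^1_p(0,T;X_0) \cap L_p(0,T;X_1)$ because the time trace vanishes, and then appeal to the mixed derivative theorem (Dore-Venni, cf.\ Theorem 4.5.9 in \cite{pruessbuch}) to obtain the continuous embedding
\begin{equation*}
\mathbb E \hookrightarrow H^\theta_p(\R_+; H^{4-1/q-3\theta}_q(\Sigma)), \qquad \theta \in [0,1],
\end{equation*}
which is the workhorse for identifying trace exponents in both scales.

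Boundedness then follows by evaluating this embedding at two distinguished values of $\theta$. At $\theta = 0$ the classical spatial trace of $\nabla h$ on $\partial\Sigma$ delivers $\nabla h|_{\partial\Sigma} \in L_p(\R_+; B^{3-2/q}_{qq}(\partial\Sigma))$. The critical value is $\theta_* = 1 - 2/(3q)$: there $h$ has spatial smoothness $1+1/q$, so $\nabla h$ attains the borderline smoothness $1/q$ in which the boundary trace lands in $L_q(\partial\Sigma)$; at this borderline, the anisotropic trace theorem returns a Triebel-Lizorkin (not Besov) time scale, producing exactly $F^{1-2/(3q)}_{pq}(\R_+; L_q(\partial\Sigma))$. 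For the right inverse I would reduce to the flat model ($n=3$, $\Sigma = \R_+ \times \R$, $\partial\Sigma = \{0\} \times \R$) and define $Eg$ as the unique solution of an auxiliary half-space parabolic problem whose spatial operator has the same $1{:}3$ anisotropy as $\mathbb E$, e.g.
\begin{equation*}
\partial_t u + (1 + (-\Delta_\Sigma)^{3/2}) u = 0 \text{ on } \Sigma, \quad \partial_\nu u|_{\partial\Sigma} = g, \quad u|_{t=0} = 0.
\end{equation*}
This enjoys maximal $L_p$-$L_q$ regularity thanks to a bounded $\mathcal H^\infty$-calculus for $(-\Delta_\Sigma)^{3/2}$ on $L_q(\Sigma)$ (as in Section 4 of the excerpt) and verification of the Lopatinskii-Shapiro condition for the Neumann symbol; the same anisotropic trace theorem used in Step~2 then guarantees that $Eg \in \mathbb E$ with the correct estimate.

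The main obstacle is the sharp anisotropic trace identification, in particular that the time scale on $L_q(\partial\Sigma)$ is the Triebel-Lizorkin space $F^{1-2/(3q)}_{pq}$ rather than a Besov space; this requires the refined trace results of Meyries-Veraar for mixed-norm anisotropic Triebel-Lizorkin spaces, including the characterization by differences used later in Section \ref{section34345439}, and a careful matching of the auxiliary problem's boundary symbol to the target intersection space. Uniformity of the constant in $T$ follows by extending data by zero to $\R_+$---which is permitted because the assumption $1-2/(3q) - 1/p > 0$ built into the hypotheses on $p$ and $q$ forces the time trace of $g$ to vanish---and then restricting the solution back to $(0,T)$; both operations have norm independent of $T$.
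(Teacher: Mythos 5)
Your high-level plan (interpret $\mathbb E$ as an anisotropic space with $1{:}3$ parabolic scaling and invoke anisotropic trace theory) is the right idea and is close in spirit to the paper's argument, which uses the identification in Kaip's thesis together with the anisotropic Triebel--Lizorkin trace theorem of Johnsen--Sickel. However, the concrete mechanism you propose has a gap. The mixed-derivative embedding $\mathbb E\hookrightarrow H^\theta_p(\R_+;H^{4-1/q-3\theta}_q(\Sigma))$ only produces Bessel-potential spaces in time, that is $F^\theta_{p,2}$-spaces, and evaluating it at any single $\theta$ can never produce the microscopic index $q$ that appears in the target $F^{1-2/(3q)}_{pq}(\,\cdot\,;L_q(\partial\Sigma))$. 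Moreover, at your critical value $\theta_*=1-2/(3q)$ one has $\nabla h \in H^{\theta_*}_p(\R_+;H^{1/q}_q(\Sigma))$, and the boundary trace $H^{1/q}_q\to L_q$ is not bounded (it is bounded on $F^{1/q}_{q,1}$, strictly smaller than $H^{1/q}_q=F^{1/q}_{q,2}$); so the claim that ``at this borderline the anisotropic trace theorem returns a Triebel--Lizorkin time scale'' is a heuristic that the two-$\theta$ argument does not deliver. What is needed, and what the paper actually does, is to first embed $\mathbb E$ into the vector-valued Triebel--Lizorkin space ${}_0F^{1-1/(3q)}_{pq}(0,T;W^1_q(\Sigma))$ (Kaip, Propositions~5.37--5.39), rewrite the resulting intersection $F^{1-1/(3q)}_{pq}(L_q)\cap L_p(B^{3-1/q}_{qq})$ as a genuinely anisotropic space $F^{s,\vec a}_{\vec p,q}$ (Kaip, Proposition~5.23), and then apply the anisotropic trace theorem for such spaces (Johnsen--Sickel, Corollary~2.7); it is exactly this pass through $F^{s,\vec a}_{\vec p,q}$ that injects the $q$ index and makes the borderline trace meaningful.

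The construction of the right inverse via maximal regularity for the auxiliary problem $\partial_t u + (1+(-\Delta_\Sigma)^{3/2})u=0$, $\partial_\nu u|_{\partial\Sigma}=g$, $u|_{t=0}=0$, is also problematic as stated: proving maximal regularity with Neumann data in the sharp Triebel--Lizorkin trace space is essentially equivalent to knowing the trace theorem and its retraction in the first place, so the argument threatens circularity unless you construct the solution operator explicitly and verify the mapping property by hand. The paper instead obtains the right inverse directly from the retraction that comes packaged with Johnsen--Sickel's anisotropic trace theorem, which avoids this issue. Your observation on $T$-uniformity via zero extension (using the vanishing time trace and $1-2/(3q)>1/p$) is correct and coincides with the paper's argument.
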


\begin{remark}
	The time trace at $t=0$ in $ {_0 F}^{1 - 2/(3q)}_{pq}(0,T;L_q(\partial\Sigma))$ is well defined since $1-2/(3q) > 1/p$ is ensured, cf. \cite{meyriesveraarpointwise}.
\end{remark}
\begin{proof} We may use Propositions 5.37 and 5.39 in \cite{kaipdiss} to get an embedding
	\begin{equation} \label{293864973dd}
	_0 W^1_p(0,T;X_0) \cap L_p(0,T;X_1) \into \prescript{}{0} F^{ 1-1/(3q) }_{pq}(0,T; W^1_q (\Sigma)),
	\end{equation}
	where the embedding constant is independent of $T$. This can be seen as follows. Since we restrict ourselves to functions with vanishing trace at $t=0$ we may extend the function to the half line $\R_+$ by reflection. We then apply the result in \cite{kaipdiss} and then restrict the extensions back to the finite interval $(0,T)$.
	
	Hence, \eqref{293864973dd} yields that for any $h \in {_0 W} ^1_p(0,T;X_0) \cap L_p(0,T;X_1),$
	\begin{equation}
	\nabla h \in  \prescript{}{0} F^{ 1-1/(3q)  }_{pq}(0,T; L_q (\Sigma)) \cap L_p(0,T;B^{3- 1/q}_{qq}(\Sigma)).
	\end{equation}
	Concering the traces of $\nabla h$ on the boundary $\partial\Sigma$, we use Proposition 5.23 in \cite{kaipdiss} to write this intersection space on the right hand side as an anisotropic Triebel-Lizorkin space $F^{s, \vec a}_{\vec p, q}$ and use the trace theory developed in \cite{kaipdiss} for these particular spaces. For a definition of $F^{s, \vec a}_{\vec p, q}$ we refer to Definition 5.15 in \cite{kaipdiss}. By Proposition 5.23 in \cite{kaipdiss},
	\begin{equation}
	F^{ 1-1/(3q)  }_{pq}(0,T; L_q (\Sigma)) \cap L_p(0,T;B^{3- 1/q}_{qq}(\Sigma)) \equiv F^{s, \vec a}_{\vec p, q}((0,T) \times \Sigma),
	\end{equation}
	where $s=1$,
	\begin{equation}
	\vec a = \left( \frac{1}{l}, ... , \frac{1}{l}, \frac{1}{t} \right), \quad  \vec p = (q,...,q,p) , \quad t = 1- \frac{1}{3q}, \quad l = 3- \frac{1}{q},
	\end{equation}
	where we take $n-1$ copies of $1/l$ and $q$, respectively. For taking now traces in these anisotropic Triebel-Lizorkin spaces we refer to \cite{johnsensickel2008}. With the notation used there in equations (2.1) and (2.11) we use Corollary 2.7 in \cite{johnsensickel2008} to get that the trace operator onto the boundary $\partial\Sigma$,
	\begin{equation}
	\operatorname{tr}_{\partial\Sigma} : F^{s, \vec a}_{\vec p, q}((0,T) \times \Sigma) \pfeil
	F^{s- \frac{1}{ql} , \vec {a''}}_{\vec {p''}, q}((0,T) \times \partial\Sigma) ,
	\end{equation}
	is bounded. Here $\vec{a''}$ and $\vec{p''}$ are used as introduced in the beginning of Section 2.1 in \cite{johnsensickel2008}. In our particular case,
	\begin{equation}
	\vec{a''} = \left( \frac{1}{l}, ... , \frac{1}{l}, \frac{1}{t} \right), \quad \vec{p''} = (q,...,q,p),
	\end{equation}
	taking now $n-2$ copies of $1/l$ and $q$, respectively. 
	We note at that point that by the order of integration with respect to the different exponents in $\vec p$ as explained in equation (3.1) in \cite{johnsensickel2008}, we have to take traces in "$x_1$-direction" in the notation of \cite{johnsensickel2008} and not in $x_n$-direction and therefore have to use Corollary 2.7 and not Corollary 2.8 in \cite{johnsensickel2008}.
	
	Again using Proposition 5.23 in \cite{kaipdiss},
	\begin{equation}
	F^{s- \frac{1}{ql} , \vec {a''}}_{\vec {p''}, q}((0,T) \times \partial\Sigma) = F^{ (s- \frac{1}{ql})t}_{pq}(0,T;L_q(\partial\Sigma)) \cap L_p(0,T; B^{ (s-\frac{1}{ql})l}_{qq}(\partial\Sigma)).
	\end{equation}
	Clearly,
	\begin{equation}
	\left(s- \frac{1}{ql}\right)t = \left( 1- \frac{1}{3q-1} \right)\left( 1- \frac{1}{3q} \right) = 1 - \frac{2}{3q},
	\end{equation}
	as well as
	\begin{equation}
	\left(s- \frac{1}{ql}\right)l = 3\left(s- \frac{1}{ql}\right)t = 3 - \frac{2}{q}.
	\end{equation}
	Hence
	\begin{equation} \label{eq18}
	F^{s- \frac{1}{ql} , \vec {a''}}_{\vec {p''}, q}((0,T) \times \partial\Sigma) = F^{ 1 - 2/(3q)  }_{pq}(0,T;L_q(\partial\Sigma)) \cap L_p(0,T; B^{ 3-2/q}_{qq}(\partial\Sigma)).
	\end{equation}
	Concludingly, we have shown so far that the mapping $h \mapsto \operatorname{tr}_{\partial\Sigma}\nabla h  $ between the spaces in \eqref{neumanntrace} is bounded.
	
	It remains to construct a continuous right inverse. This follows now by similar arguments using Corollary 2.7 in \cite{johnsensickel2008}. We omit the details here.
	
	We again point out that the constant is only independent of $T$ since we restrict ourselves to functions having vanishing trace at $t=0$.
\end{proof}
The boundedness of the trace operator can easily be generalized to the case of a curved interface by a standard argument involving a partition of unity and a localization argument.
\begin{theorem} \label{9486756}
	Let $\Omega \subseteq \Rn, n = 2,3$ bounded and smooth and $\Sigma$ a smooth interface of dimension $n-1$ in the sense that $\Sigma$ is a submanifold with interior inside $\Omega$ meeting the boundary at a ninety degree angle. Then
	$\operatorname{tr}_{\partial\Sigma} \nabla_\Sigma : {_0 X_T} \pfeil F^{1 - 2/(3q)}_{pq}(0,T;L_q(\partial\Sigma)) \cap L_p(0,T;B^{ 3 -2/q}_{qq}(\partial\Sigma))$ is bounded. 
\end{theorem}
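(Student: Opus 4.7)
The plan is to reduce the curved setting to the flat model problem treated in Theorem \ref{8374650783645} by a standard partition of unity together with local diffeomorphisms that flatten both $\Sigma$ and, near $\partial\Sigma$, also $\partial\Omega$. Since $\Sigma$ is a smooth $(n-1)$-dimensional submanifold with boundary meeting $\partial\Omega$ orthogonally, I would cover $\Sigma$ by finitely many open sets $U_0,\dots,U_N \subset \mathbb R^n$ such that: (i) interior patches $U_j$ with $\overline{U_j} \cap \partial\Sigma = \emptyset$ admit a smooth diffeomorphism $\Phi_j : U_j \to V_j \subset \mathbb R^n$ with $\Phi_j(U_j \cap \Sigma) \subset \{x_1 = 0\}$; (ii) boundary patches $U_j$ with $U_j \cap \partial\Sigma \neq \emptyset$ admit a smooth diffeomorphism $\Phi_j : U_j \to V_j \subset \mathbb R^n_+$ which maps $U_j \cap \Sigma$ into the flat interface $\{x_1 = 0\} \cap \mathbb R^n_+$ and $U_j \cap \partial\Omega$ into $\partial\mathbb R^n_+$. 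Such $\Phi_j$ in the boundary case can be constructed from curvilinear coordinates as in Section 3, exploiting the ninety-degree contact condition so that $\Phi_j$ straightens both $\Sigma$ and $\partial\Omega$ simultaneously. Fix a subordinate partition of unity $\{\phi_j\} \subset C^\infty_0(\mathbb R^n)$ with $\sum_j \phi_j \equiv 1$ in a neighbourhood of $\Sigma$.

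For the boundedness of the trace, given $h \in {_0 X_T}$, I would write $h = \sum_j \phi_j h$ and transport each piece $h_j := (\phi_j h)\circ \Phi_j^{-1}$ to the half-space model. Since $\Phi_j$ is a smooth diffeomorphism with bounded derivatives, multiplication by a fixed $C^\infty_0$ cutoff and composition with $\Phi_j^{\pm 1}$ are bounded on both $W^{1-1/q}_q$ and $W^{4-1/q}_q$ (and are compatible with the ${_0}$-subscript since they act purely in the space variable, preserving vanishing time trace). Thus each $h_j$ lies in the flat space $_0 W^1_p(0,T;W^{1-1/q}_q(\mathbb R^{n-1})) \cap L_p(0,T;W^{4-1/q}_q(\mathbb R^{n-1}))$. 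Theorem \ref{8374650783645} then gives the bound on $\nabla h_j|_{\partial\Sigma_{\text{flat}}}$ in the target Triebel--Lizorkin intersection. Pulling back via $\Phi_j$ is again a bounded operation on the target space on $\partial\Sigma$ (applying Proposition 5.23 in \cite{kaipdiss} together with standard diffeomorphism invariance of anisotropic Triebel--Lizorkin spaces, cf.\ \cite{johnsensickel2008}). Summing in $j$ and noting that $\nabla_\Sigma (\phi_j h) = \phi_j \nabla_\Sigma h + (\nabla_\Sigma \phi_j) h$ contributes only lower-order terms controlled by the same norm, I recover the desired estimate for $\operatorname{tr}_{\partial\Sigma}\nabla_\Sigma h$.

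To construct a continuous right inverse $E$, proceed symmetrically: given $g \in {_0 F}^{1-2/(3q)}_{pq}(0,T;L_q(\partial\Sigma)) \cap L_p(0,T;B^{3-2/q}_{qq}(\partial\Sigma))$, take a partition of unity $\{\tilde\phi_j\}$ on $\partial\Sigma$ subordinate to the boundary patches, transport each $\tilde\phi_j g$ to the flat model via $\Phi_j$, apply the right inverse from Theorem \ref{8374650783645} there to obtain a lift $\tilde h_j$, pull back by $\Phi_j^{-1}$, multiply by a cutoff that is $\equiv 1$ on $\operatorname{supp}\tilde\phi_j$, and sum up; the trace identity $\nabla_\Sigma Eg|_{\partial\Sigma} = g$ on each patch follows from the flat construction plus the fact that all lower-order correction terms arising from derivatives of the cutoffs can be absorbed by iterating or solving a small Neumann series on $\partial\Sigma$.

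The main technical obstacle is the diffeomorphism invariance of the anisotropic Triebel--Lizorkin space appearing on $\partial\Sigma$, in particular that the intersection ${_0 F}^{1-2/(3q)}_{pq}(0,T;L_q(\partial\Sigma)) \cap L_p(0,T;B^{3-2/q}_{qq}(\partial\Sigma))$ transforms correctly under a spatial diffeomorphism of $\partial\Sigma$ while preserving the vanishing time trace at $t=0$. This is handled by writing the intersection as an anisotropic $F^{s,\vec a}_{\vec p,q}$-space via Proposition 5.23 in \cite{kaipdiss} and invoking the diffeomorphism invariance results and the trace results of \cite{johnsensickel2008} already used in the proof of Theorem \ref{8374650783645}.
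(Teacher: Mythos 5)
Your approach is essentially the same as the paper's, which states only that the boundedness ``can easily be generalized to the case of a curved interface by a standard argument involving a partition of unity and a localization argument''; you have correctly supplied those details via a finite cover of interior and boundary patches, local diffeomorphisms that simultaneously flatten $\Sigma$ and $\partial\Omega$ (exploiting the ninety-degree contact condition), application of Theorem \ref{8374650783645} on each flattened piece, and pullback. Note that Theorem \ref{9486756} asserts only boundedness of the trace operator, so your third paragraph on constructing a right inverse (with its somewhat hand-wavy Neumann-series absorption of cutoff commutators) is not required by the statement and may be omitted.
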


The next result states that the Neumann trace space is a Banach algebra under pointwise multiplication.
\begin{theorem} \label{banachalgebra}
	Let $n=2,3$, $0 < T \leq +\infty, 3 \leq p < \infty$ and $q \in (3/2,2) \cap (2p/(p+1) , 2p)$. Then the Neumann trace space with vanishing time trace at $t=0$ above is a Banach algebra, that is, the product estimate
	\begin{align} \label{9838rt}
	\| fg \|&_{F^{1 - \frac{2}{3q}}_{pq}(0,T;L^q(\partial\Sigma)) \cap L^p(0,T;B^{ 3 -\frac{2}{q}}_{qq}(\partial\Sigma))} \lesssim \\ &\lesssim \| f \|_{F^{1 - \frac{2}{3q}}_{pq}(0,T;L^q(\partial\Sigma)) \cap L^p(0,T;B^{ 3 -\frac{2}{q}}_{qq}(\partial\Sigma))}\| g \|_{F^{1 - \frac{2}{3q}}_{pq}(0,T;L^q(\partial\Sigma)) \cap L^p(0,T;B^{ 3 -\frac{2}{q}}_{qq}(\partial\Sigma))} \nonumber
	\end{align}
	holds for all $f,g \in \prescript{}{0} F^{1 - 2/(3q)}_{pq}(0,T;L_q(\partial\Sigma)) \cap L_p(0,T;B^{ 3 -2/q}_{qq}(\partial\Sigma)).$ In particular, the constant in \eqref{9838rt} is independent of the length of the time interval.
\end{theorem}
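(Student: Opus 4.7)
The plan is to reduce the algebra estimate to a product estimate on a scalar anisotropic Triebel-Lizorkin space, where a paraproduct argument combined with a Sobolev embedding into $L_\infty$ yields the desired result. First, since all functions have vanishing trace at $t=0$ and the time regularity $1-2/(3q)$ strictly exceeds $1/p$ in the parameter range (as $p\geq 3$, $q>3/2$), zero-extension in time is bounded from $\prescript{}{0}F^{1-2/(3q)}_{pq}(0,T;L_q(\partial\Sigma))$ into $\prescript{}{0}F^{1-2/(3q)}_{pq}(\R_+;L_q(\partial\Sigma))$ with constant independent of $T$. This reduces the whole statement to the case $T=+\infty$ and simultaneously delivers the claimed $T$-independence of the final constant. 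A standard partition of unity on the smooth compact manifold $\partial\Sigma$ together with local straightening then reduces matters to the Euclidean model in which $\partial\Sigma$ is replaced by $\R^{n-2}$.

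For $n=2$ the boundary $\partial\Sigma$ is zero-dimensional, so the spatial Besov factor is finite-dimensional and the intersection collapses to $\prescript{}{0}F^{1-2/(3q)}_{pq}(\R_+)$. This is a classical scalar Banach algebra: under $1-2/(3q)>1/p$ one has the embedding $F^{1-2/(3q)}_{pq}(\R_+)\hookrightarrow L_\infty(\R_+)$, so the standard Bony paraproduct decomposition gives the estimate $|fg|_X\lesssim |f|_X|g|_{L_\infty}+|f|_{L_\infty}|g|_X$ and hence the algebra property.

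For $n=3$ the argument is analogous but lives on $\R_+\times\R$. Using Proposition 5.23 in Kaip's thesis (as already invoked in the proof of Theorem \ref{8374650783645}) one identifies
\[
F^{1-2/(3q)}_{pq}(\R_+;L_q(\R))\cap L_p(\R_+;B^{3-2/q}_{qq}(\R)) \;\cong\; F^{s,\vec a}_{\vec p,q}(\R_+\times\R)
\]
as an anisotropic Triebel-Lizorkin space, with $s=1$, $\vec a=(1/(3-2/q),1/(1-2/(3q)))$ and $\vec p=(q,p)$. On this space one then establishes (i) the anisotropic Sobolev embedding $F^{s,\vec a}_{\vec p,q}\hookrightarrow L_\infty$, which amounts to checking that the critical exponent $s-\sum_i a_i/p_i$ is strictly positive (easily verified for $p\geq 3$, $q\in(3/2,2)$), and (ii) the anisotropic paraproduct estimate
\[
|fg|_{F^{s,\vec a}_{\vec p,q}}\lesssim |f|_{F^{s,\vec a}_{\vec p,q}}|g|_{L_\infty}+|f|_{L_\infty}|g|_{F^{s,\vec a}_{\vec p,q}},
\]
obtained by replacing the isotropic Littlewood-Paley decomposition in Bony's argument by its anisotropic counterpart (cf. Johnsen-Sickel and Yamazaki). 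Combining (i) and (ii) yields the Banach algebra estimate \eqref{9838rt}.

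The main obstacle is item (ii): while the scheme of the proof of the paraproduct estimate is the same as in the isotropic setting of Lemma \ref{danchinlemma}, one must carefully handle the two different directions of Littlewood-Paley blocks with the mixed integrability exponents $(q,p)$ and verify that the low-high, high-low and high-high sums converge in $F^{s,\vec a}_{\vec p,q}$. Once this product estimate and the embedding into $L_\infty$ are in place, the Banach algebra property follows by the usual chaining $|fg|_X\lesssim (|f|_X|g|_{L_\infty}+|f|_{L_\infty}|g|_X)\lesssim |f|_X|g|_X$, with constant independent of $T$ by virtue of the zero-extension reduction carried out in the first step.
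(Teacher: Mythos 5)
Your route differs from the paper's in a way that matters. Both proofs share the same skeleton — embed the Neumann trace space into $L_\infty(0,T;L_\infty(\partial\Sigma))$ and then run a paraproduct argument — and your reduction to $T=\infty$ by zero-extension and to the Euclidean model by a partition of unity are fine preprocessing steps. The divergence is in how the paraproduct estimate itself is executed. You propose to identify the full intersection norm with a single anisotropic Triebel--Lizorkin norm $F^{s,\vec a}_{\vec p,q}(\R_+\times\R)$ via Kaip's Proposition 5.23 and then prove an anisotropic Bony paraproduct estimate in that scale directly. The paper instead never performs a paraproduct in the anisotropic scale: it treats the two pieces of the intersection norm \emph{separately}, estimating $|fg|_{L_p(0,T;B^{3-2/q}_{qq})}$ by applying the one-dimensional Besov algebra estimate (Lemma~\ref{danchinlemma}) pointwise in time and then H\"older, and $|fg|_{F^{1-2/(3q)}_{pq}(0,T;L_q)}$ by invoking Proposition 5.7 of Meyries--Veraar, a known product estimate for Banach-space-valued Triebel--Lizorkin spaces. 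Each piece only requires a product estimate in one variable at a time, with the other variable bounded in $L_\infty$.

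This is not merely a stylistic difference: the step you flag as ``the main obstacle'' — establishing the anisotropic paraproduct inequality with the mixed integrability vector $\vec p=(q,p)$, $q\neq p$ — is a genuine gap in your argument. You name Johnsen--Sickel and Yamazaki but do not point to a theorem that actually gives you the estimate for these mixed-norm, mixed-anisotropy spaces, and the convergence of the three paraproduct sums in $F^{s,\vec a}_{\vec p,q}$ with $\vec p$ non-constant is exactly the kind of thing that cannot be waved through by saying ``replace the isotropic Littlewood--Paley blocks by anisotropic ones.'' (For instance, the order of the iterated $L_q(L_p)$-norms interacts non-trivially with which direction a given dyadic block lives in, and this is where the technicalities concentrate.) The paper's split-norm argument dissolves the problem: no anisotropic paraproduct is ever needed, and the two one-variable product estimates it does use are standard and come with citable references. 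Also, for the $L_\infty$ embedding your stated sufficient condition (positivity of $s - \sum_i a_i/p_i$) would need justification in this mixed-norm anisotropic scale; the paper avoids this by routing through Kaip's mixed-derivative embedding into $H^{(1-2/(3q))\theta}_p(0,T;B^{(3-2/q)(1-\theta)}_{qq})$ and then checking two scalar Sobolev conditions, which is elementary. So: the overall architecture of your proof is sound, but to make it rigorous you would either have to supply (or locate) the anisotropic mixed-norm paraproduct estimate, or — more economically — switch to the paper's strategy of estimating the two halves of the intersection norm independently.
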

\begin{proof}
We begin by showing that
	\begin{equation} \label{embemb111}
	\prescript{}{0} F^{1 - 2/(3q)}_{pq}(0,T;L_q(\partial\Sigma)) \cap L_p(0,T;B^{ 3 -2/q}_{qq}(\partial\Sigma)) \into L_\infty(0,T;L_\infty(\partial\Sigma)).
	\end{equation}
	Using Proposition 5.38 in \cite{kaipdiss}, the space on the left hand side continuously embeds into
	\begin{equation}
	 \prescript{}{0} H^{(1-\frac{2}{3q})\theta}_p(0,T;B^{(3-2/q)(1-\theta)}_{qq}(\partial\Sigma))
	\end{equation}
	for any $\theta \in (0,1)$ where the embedding constant is independent of $T$. 
	Note that if $\theta$ is so small such that the space on the right hand side does not have a well defined time trace at $t=0$, we simply replace it with $ H^{(1-\frac{2}{3q})\theta}_p(0,T;B^{(3-2/q)(1-\theta)}_{qq}(\partial\Sigma))$.
	
	Now,
	since $n=2$ or $3$, the boundary $\partial\Sigma$ has at most dimension $1$, whence the latter space on the right hand side surely embeds into $L_\infty(0,T;L_\infty(\partial\Sigma))$, if
	\begin{equation}
	\left(1-2/(3q)\right)\theta - 1/p > 0, \quad \left(3-2/q \right)(1-\theta) - 1/q> 0.
	\end{equation}
	These both equations are equivalent to finding some $\theta \in (0,1)$ such that
	\begin{equation}
	\frac{1}{p} \frac{3q}{3q-2} < \theta < 1-\frac{1}{3q-2}.
	\end{equation}
	Simple calculations show that for any $q \in (3/2,2),$
	\begin{equation}
	1- \frac{1}{3q-2} > \frac{3}{5}, \quad \frac{3q}{3q-2} < \frac{9}{5},
	\end{equation}
	whence $p \geq 3$ ensures $\theta = 3/5$ is a solid choice. Therefore we know for sure that in any of our cases the Neumann trace space embeds continuously into $L_\infty(0,T;L_\infty(\partial\Sigma)).$
	
	Using well-known paraproduct estimates, cf. \cite{danchinbuch},
	\begin{align}
	|&fg |_{L_p(0,T;B^{3-2/q}_{qq}(\partial\Sigma))} \leq\\
	&\leq \left|  |f(t)|_{L_\infty} |g(t)|_{ B^{3-2/q}_{qq} }  \right|_{L_p(0,T)} + \left|  |f(t)|_{ B^{3-2/q}_{qq} } |g(t)|_{L_\infty}  \right|_{L_p(0,T)} \\
	&\leq | f |_{L_\infty(L_\infty)} | g |_{L_p(B^{3-2/q}_{qq})} +  | f |_{L_p(B^{3-2/q}_{qq})}| g |_{L_\infty(L_\infty)}.
	\end{align}
	From Proposition 5.7 in \cite{meyriesveraarpointwise} we get
	\begin{align}
	| fg |_{ F^{1 - \frac{2}{3q}}_{pq}(0,T;L_q(\partial\Sigma)) } &\lesssim | f|_{ F^{1 - \frac{2}{3q}}_{pq}(0,T;L_q(\partial\Sigma)) } | g |_{L_\infty(0,T;L_\infty(\partial\Sigma))} \\
	&+  | f |_{L_\infty(0,T;L_\infty(\partial\Sigma))} | g|_{ F^{1 - \frac{2}{3q}}_{pq}(0,T;L_q(\partial\Sigma)) }.
	\end{align}
	These two estimates and \eqref{embemb111} finish the proof.
\end{proof}

\paragraph{Acknowledgements.} M.R. would like to thank Harald Garcke for pointing out the results in \cite{depnerdiss} and for inspiring discussions concerning the proof of the stability result. The work of M.R. is financially supported by the DFG graduate school GRK 1692. The support is gratefully acknowledged.

\end{appendices}
\bibliography{bibo}
\end{document}